\newtheorem{theorem}{Theorem}[section]
\newtheorem{lemma}[theorem]{Lemma}
\newtheorem{corollary}[theorem]{Corollary}
\newtheorem{proposition}[theorem]{Proposition}
\theoremstyle{definition}
\newtheorem{definition}[theorem]{Definition}
\newtheorem{example}[theorem]{Example}
\theoremstyle{remark}
\newtheorem{remark}[theorem]{Remark}
\newcommand{\Dcal}{\ensuremath{\mathcal{D}}}
\newcommand{\Xcal}{\ensuremath{\mathcal{X}}}
\newcommand{\Ycal}{\ensuremath{\mathcal{Y}}}
\newcommand{\Tcal}{\ensuremath{\mathcal{T}}}
\newcommand{\Gcal}{\ensuremath{\mathcal{G}}}
\newcommand{\Fcal}{\ensuremath{\mathcal{F}}}
\newcommand{\Ccal}{\ensuremath{\mathcal{C}}}
\newcommand{\Rcal}{\ensuremath{\mathcal{R}}}
\newcommand{\Wcal}{\ensuremath{\mathcal{W}}}
\newcommand{\Ucal}{\ensuremath{\mathcal{U}}}
\newcommand{\Kbb}{\mathbb{K}}
\newcommand{\ra}{\rightarrow}
\numberwithin{equation}{section}
\begin{document}
\title{Silting modules and ring epimorphisms}
\author{Lidia Angeleri H\"ugel, Frederik Marks, Jorge Vit{\'o}ria}

\address{Lidia Angeleri H\"ugel, Jorge Vit\'oria, Dipartimento di Informatica - Settore di Matematica, Universit\`a degli Studi di Verona, Strada le Grazie 15 - Ca' Vignal, I-37134 Verona, Italy} \email{lidia.angeleri@univr.it, jorge.vitoria@univr.it}
\address{Frederik Marks, Institut f\"ur Algebra und Zahlentheorie, Universit\"at Stuttgart, Pfaffenwaldring 57, 70569 Stuttgart, Germany}
\email{marks@mathematik.uni-stuttgart.de}

\maketitle

\begin{center} {Dedicated to Alberto Facchini on the occasion of his 60th birthday} \end{center}

\begin{abstract}
There are well-known constructions relating  ring epimorphisms and tilting modules. The new notion of silting module provides a wider framework for studying this interplay. To every partial silting module we associate a ring epimorphism which we describe explicitly as an idempotent quotient of the endomorphism ring of the Bongartz completion. For hereditary rings, this assignment is used to parametrise homological ring epimorphisms by  silting modules. We further show that homological ring epimorphisms of a hereditary ring form a lattice which completes the poset of noncrossing partitions in the case of finite dimensional algebras.
\end{abstract}

\bigskip

\section{Introduction}
There is a close relationship between ring epimorphisms and tilting theory which goes back to \cite{GeLe} and was further studied in \cite{AS1,AA}. In fact,  ring epimorphisms  with nice homological properties can be used to construct tilting modules. Here we have to deal with large tilting modules, since  even the ring epimorphisms $A\to B$  of a  finite dimensional algebra $A$ usually involve infinite dimensional algebras $B$.
Tilting modules arising from ring epimorphisms play an important role in classification results. For example, over a Dedekind domain, all tilting modules are of this form, and over the Kronecker algebra, this applies to all but one up to equivalence (\cite{AS1,AS2,Ma}). 

In \cite{AMV}, we developed the new notion of silting module as a common generalisation of (possibly large) tilting modules and support $\tau$-tilting modules. In this paper we show that silting theory is an appropriate context to study the phenomena above, as it provides a wider framework for the interplay with ring epimorphisms. This idea is further supported by parallel work in \cite{MS}, where it is shown that for finite dimensional algebras of finite representation type, silting modules are in bijection with universal localisations.
 
Silting modules over an arbitrary ring $A$ capture some of the main features of tilting and support $\tau$-tilting modules. As shown in \cite{AMV}, they generate torsion classes in the module category which provide left approximations, they are the  $0^{th}$-cohomologies of 2-term silting complexes, and they correspond bijectively to certain t-structures and co-t-structures in the derived category. 

Of particular relevance to this paper is the existence of a suitable notion of partial silting module. In \cite{AMV}, we proved that every partial silting module admits an analogue of the Bongartz complement and, thus, can be completed to a silting module. Here, we associate to a given partial silting module a full subcategory of $Mod(A)$ which can be understood as its perpendicular category. We show that this subcategory is bireflective, that is, its inclusion functor admits both left and right adjoints. Such subcategories are known to correspond bijectively to equivalence classes of ring epimorphisms starting in $A$ (\cite{GdP}). We can thus assign a ring epimorphism $A\to B$ to every partial silting module. 

This assignment extends results proved in the context of tilting (\cite{GeLe,CTT2}) or support $\tau$-tilting modules (\cite{J}).
In particular, it is shown in \cite{J} that the ring $B$ associated with a $\tau$-rigid (that is, a finitely generated partial silting) module $T_1$ over a finite dimensional algebra $A$ 
is a support algebra of the endomorphism ring $End_A(T)$ of the Bongartz completion $T$ of $T_1$. We prove (Theorem \ref{thm2}) that all ring epimorphisms arising from partial silting modules can be described in a similar way as idempotent quotients of  $End_A(T)$. Working with large modules implies, however, that we have to replace idempotent elements by idempotent ideals, and the proof requires a detailed analysis of the functors involved in our construction. 
We further investigate  properties of the ring epimorphism associated with a partial silting module. It turns out that even finite dimensional partial tilting modules over  finite dimensional algebras can give rise to non-injective or non-homological ring epimorphisms. 

Later, we focus on the case when $A$ is a hereditary ring. Here all ring epimorphisms arising from partial silting modules are homological.  Conversely, every homological ring epimorphism $f:A\to B$ gives rise to a silting $A$-module $T=B\oplus Coker(f)$, and the map $f$ can be regarded as a minimal left $Add(T)$-approximation of the regular module $A$. We thus restrict our attention to minimal silting modules, that is, silting modules $T$ providing a minimal $Add(T)$-approximation sequence $A\to T_0\to T_1\to 0$. Using that  $A$ is hereditary, it follows that $T_1$ is partial silting and, thus, we can assign to $T$ a well-defined ring epimorphism. We show that this assignment  establishes a bijection between minimal silting modules and homological ring epimorphisms (Theorem \ref{main silting epi}), where the minimal tilting modules correspond to injective homological ring epimorphisms.

Over a finite dimensional hereditary algebra $A$, our correspondence restricts to a bijection  between finite dimensional
support tilting modules and  homological ring epimorphisms $A\to B$ with finite dimensional $B$. These objects also correspond to  finitely generated wide  subcategories of $mod(A)$, and we recover results from  \cite{IT, Ma}.  Of course, the combinatorial interpretation  of finite dimensional support tilting modules  in terms of noncrossing partitions or clusters is lost when working in our general setting. However, a ring theoretic counterpart is provided by the poset of all homological ring epimorphisms. Since homological ring epimorphisms coincide with universal localisations (\cite{KSt}), this poset turns out to be a lattice  (Corollary \ref{lattice}). Notice that  if we restrict to ring epimorphisms with finite dimensional target, we obtain the poset of exceptional antichains from \cite{R} (see also \cite{HK}) which is not a lattice in general.

Finally, we show that our classification of minimal tilting modules over hereditary rings    fits in a number of  classification results over  further  rings that 
reveal a deep connection between tilting theory and {localisation}. 

In a forthcoming paper, we will explore the connections between silting theory and categorical localisation of  module categories and derived categories. An important role will be played by the explicit description of the ring epimorphism associated with a partial silting module  mentioned above.

The structure of the paper is as follows. Section 2 recalls some facts on silting modules and ring epimorphisms. In Section 3, we construct the bireflective subcategory associated with a partial silting module and we describe explicitly the associated ring epimorphism. In Section 4, we investigate the homological properties of these ring epimorphisms. Finally, Section 5 discusses silting modules over hereditary rings and the  classification of homological ring epimorphisms.  

\smallskip

{\bf Acknowledgement.}  
The first named author is partially supported by  Fondazione Cariparo, Progetto di Eccellenza ASATA.
The third named author is supported by a Marie Curie Intra-European
Fellowship within the 7th European Community Framework Programme
(PIEF-GA-2012-327376).

\section{Preliminaries}

Throughout, $A$ is a unitary ring and by an $A$-module we mean a right $A$-module, unless otherwise stated.
The category of all $A$-modules will be denoted by $Mod(A)$ and its full subcategory of projective $A$-modules by $Proj(A)$. The category $mod(A)$ denotes the category of finitely presented modules.
In some contexts, we will consider finite dimensional algebras over an algebraically closed field $\Kbb$, usually denoted by $\Lambda$.

For a given $A$-module $M$, we denote by $M^\circ$ the subcategory of $Mod(A)$ consisting of the objects $N$ such that $Hom_A(M,N)=0$, and by $M^{\perp_1}$ the subcategory of $Mod(A)$ consisting of the objects $N$ such that $Ext_A^1(M,N)=0$. We set $M^\perp$ to be $M^\circ\cap M^{\perp_1}$.
Further, $Add(M)$ denotes the additive closure of $M$ consisting of all modules isomorphic to a direct summand of an arbitrary direct sum of copies of $M$, while $Gen(M)$ is the subcategory of $M$-generated modules (that is, all epimorphic images of modules in $Add(M)$).

\subsection{Silting modules}
Recall from \cite{CT} that an $A$-module $T$ is called \textbf{partial tilting}, if $T^{\perp_1}$ is a torsion class containing $T$. Moreover, $T$ is \textbf{tilting}, if $Gen(T)=T^{\perp_1}$. In order to introduce the notion of (partial) silting module, consider, for a morphism $\sigma$ in $Proj(A)$, the class of $A$-modules
$$\Dcal_\sigma:=\{X\in Mod(A)|Hom_A(\sigma,X)\ \text{is surjective}\}.$$

\begin{definition}\cite[Definition 3.10]{AMV}\label{def p silting}
We say that an $A$-module $T$ is
\begin{itemize}
\item  \textbf{partial silting} if there is a projective presentation $\sigma$ of $T$ such that 
\begin{enumerate}
\item[(S1)] $\Dcal_\sigma$ is a torsion class.
\item[(S2)] $T$ lies in $\Dcal_\sigma$.
\end{enumerate}
\item \textbf{silting} if there is a projective presentation $\sigma$ of $T$ such that $Gen(T)=\Dcal_\sigma$.
\end{itemize}
We will then say that $T$ is (partial) silting \textbf{with respect to} $\sigma$.
\end{definition}

It follows easily from the definition that (partial) tilting modules are (partial) silting. Recall that an object $M$ in a class $\Ccal$ of $A$-modules is said to be {\bf Ext-projective in $\Ccal$}, if $\Ccal$ is contained in $M^{\perp_1}$. The following facts on silting modules will be useful later.

\begin{proposition}\cite[Lemma 3.4 and 3.7, Proposition 3.5 and 3.13]{AMV}\label{tilt silt}
Let $T$ be a silting $A$-module. The following statements hold.
\begin{enumerate}
\item $T$ is tilting over $A/Ann(T)$.
\item $Add(T)$ is the class of Ext-projective modules in $Gen(T)$.
\item There is an exact sequence
$$\xymatrix{A\ar[r]^\phi& T_0\ar[r]&T_1\ar[r]&0}$$
such that $T_0$ and $T_1$ lie in $Add(T)$ and $\phi$ is a left $Gen(T)$-approximation. 
\end{enumerate}
\end{proposition}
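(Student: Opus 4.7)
The three items compile results from \cite{AMV}; my plan is to verify them in turn using the projective presentation $\sigma\colon P_{-1}\to P_0$ witnessing that $T$ is silting, i.e.\ $Gen(T)=\Dcal_\sigma$.

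For (1), the plan is to verify the classical large-tilting axioms for $T$ over $\bar A:=A/Ann(T)$. First, I note that $\sigma$ descends to a projective presentation of $T$ in $Mod(\bar A)$, and that $Gen_A(T)=Gen_{\bar A}(T)$ since every module in $Gen(T)$ is annihilated by $Ann(T)$. It therefore suffices to prove $Gen(T)=T^{\perp_1}$ in $Mod(\bar A)$. The inclusion $\Dcal_\sigma\subseteq T^{\perp_1}$ is immediate from the long exact sequence of $\Ext$ applied to the projective presentation; combined with $Gen(T)=\Dcal_\sigma$ this gives $Gen(T)\subseteq T^{\perp_1}$. For the converse, given $M\in T^{\perp_1}$ over $\bar A$, I would use that over $\bar A$ the presentation $\sigma$ becomes a genuine projective presentation, so the vanishing of $\Ext^1_{\bar A}(T,M)$ forces $\Hom(\sigma,M)$ to be surjective, hence $M\in\Dcal_\sigma=Gen(T)$.

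For (2), the inclusion $Add(T)\subseteq\{\text{Ext-projectives in }Gen(T)\}$ follows since $T\in Gen(T)\subseteq T^{\perp_1}$, and Ext-projectivity is preserved by direct sums and summands. For the reverse inclusion, let $M$ be Ext-projective in $Gen(T)$ and pick any epimorphism $\pi\colon T^{(I)}\twoheadrightarrow M$. The key step is to show $K:=\Ker\pi$ lies in $Gen(T)$: this follows from the surjectivity of $\Hom(\sigma,T^{(I)})\to\Hom(\sigma,M)$ combined with the description of $\Dcal_\sigma$ and the torsion class property (S1). Then the exact sequence $0\to K\to T^{(I)}\to M\to 0$ splits by $\Ext^1(M,K)=0$, so $M\in Add(T)$.

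For (3), the strategy is a Bongartz-style construction. The plan is: take a set of generators of $P_{-1}$ indexed by $I$, and consider the map $\eta\colon A^{(I)}\to T_0'$ in $Add(T)$ obtained by composing with $\sigma$ followed by the projection $P_0\twoheadrightarrow T$ suitably packaged; then form the pushout along the summation $A^{(I)}\to A$ to obtain a sequence $A\to T_0\to T_1\to 0$. Using $Gen(T)=\Dcal_\sigma$ and the long exact sequence of $\Ext$, I would verify that $T_1\in Gen(T)$ and that $T_1$ is Ext-projective in $Gen(T)$, so by (2) it lies in $Add(T)$. The approximation property of $\phi$ follows because any $f\colon A\to N$ with $N\in Gen(T)=\Dcal_\sigma$ extends along $\phi$ thanks to the surjectivity of $\Hom(\sigma,N)$, which is precisely what forces $\Ext^1(T_1,N)=0$ and hence liftability along $T_0\to T_1$. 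The main obstacle will be the Ext-projectivity of $T_1$: this is where the full strength of the silting condition $Gen(T)=\Dcal_\sigma$ (rather than just (S1)+(S2)) enters, and where the pushout construction has to be set up carefully so that the long exact sequence in $\Ext$ kills $\Ext^1(T_1,-)$ on all of $Gen(T)$.
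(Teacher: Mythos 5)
The paper itself offers no proof of this proposition (it is imported from \cite{AMV}), so your argument has to stand alone, and as written it has genuine gaps in all three parts. In (1), the only nontrivial inclusion is $T^{\perp_1}\cap Mod(\bar A)\subseteq Gen(T)$, and your justification --- that over $\bar A$ the presentation becomes ``genuine'', so $\Ext^1_{\bar A}(T,M)=0$ forces $\Hom(\sigma,M)$ to be surjective --- is a non sequitur: for any presentation $P_{-1}\stackrel{\sigma}{\to}P_0\to T\to 0$, surjectivity of $\Hom(\sigma,M)$ implies $\Ext^1(T,M)=0$, but the converse fails unless one also knows that every homomorphism $P_{-1}\to M$ (resp.\ $P_{-1}\otimes_A\bar A\to M$) kills $\Ker\sigma$; this reverse implication is essentially the whole content of (1) and needs a real argument. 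One correct route is via (3): since $\Ker\phi=Ann(T)$ (any $a\in\Ker\phi$ annihilates every module in $Gen(T)$ because maps $A\to X$ factor through $\phi$), the approximation sequence becomes $0\to\bar A\to T_0\to T_1\to 0$ over $\bar A$, and applying $\Hom_{\bar A}(-,M)$ together with $\Ext^1_{\bar A}(T_1,M)=0$ shows every map $\bar A\to M$ factors through $T_0\in Add(T)$, so $M\in Gen(T)$. In (2) the reverse inclusion breaks because you allow an arbitrary epimorphism $\pi\colon T^{(I)}\to M$: its kernel need not lie in $Gen(T)$. Concretely, for the path algebra of $1\to 2$ and the tilting module $T=P_1\oplus S_1$, the map $T\to S_1$ given by the canonical quotient on $P_1$ and zero on $S_1$ has kernel $S_1\oplus S_2\notin Gen(T)$, although $S_1$ is Ext-projective in $Gen(T)$. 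You must choose $\pi$ so that $\Hom_A(T,\pi)$ is surjective (e.g.\ the trace map $T^{(\Hom_A(T,M))}\to M$); then the snake lemma applied to the map of exact $\Hom(P_0,-)$- and $\Hom(P_{-1},-)$-sequences identifies $\Coker\Hom_A(\sigma,K)$ with $\Coker\Hom_A(T,\pi)=0$, giving $K\in\mathcal{D}_\sigma=Gen(T)$, and only then does your splitting argument apply.

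For (3) the sketch does not produce a usable map: composing $A^{(I)}\twoheadrightarrow P_{-1}$ with $\sigma$ and then with the cokernel projection $P_0\twoheadrightarrow T$ is the zero map, so the pushout you describe is not the intended one, and the verifications you defer as ``the main obstacle'' are exactly the substance of the proof. The construction that works (and is the one from \cite[Theorem 3.15]{AMV} invoked in Section 3 of this paper) is the Bongartz-type pushout of $\sigma^{(H)}$, with $H=\Hom_A(P_{-1},A)$, along the universal map $P_{-1}^{(H)}\to A$: the cokernel of the resulting map $A\to T_0$ is then automatically $T^{(H)}\in Add(T)$, so the Ext-projectivity of the cokernel is not where the difficulty lies; the approximation property follows from $Gen(T)=\mathcal{D}_\sigma$ via the pushout square, and the real work is to show that the middle term $T_0$ lies in $\mathcal{D}_\sigma$ and is Ext-projective there, so that (2) yields $T_0\in Add(T)$. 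Note also that the logical order matters: (3) can be proved directly from the definition, and (1) is most cleanly deduced from it, rather than the other way around as your write-up suggests.
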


In particular, from (2) above it follows that two silting modules have the same additive closure if and only if they generate the same torsion class, in which case we say that they are \textbf{equivalent}. Moreover, from (3) it is easy to see that every $A$-module admits a left $Gen(T)$-approximation.

\subsection{Ring epimorphisms}
Recall that a \textbf{ring epimorphism} is an epimorphism in the category of rings with unit. Two ring epimorphisms $f:A\rightarrow B$ and $g:A\rightarrow C$ are said to be \textbf{equivalent} if there is a ring isomorphism $h: B\rightarrow C$ such that $g=h\circ f$. We then say that $B$ and $C$ lie in the same \textbf{epiclass} of $A$.
Epiclasses of a ring $A$ can be classified by suitable subcategories of $Mod(A)$. For a ring epimorphism $f:A\ra B$ we denote by $\mathcal{X}_B$ the essential image of the associated restriction functor $f_*$.

\begin{theorem}\cite[Theorem 1.2]{GdP}\cite{GeLe}\label{bireflective}
There is a bijection between:
\begin{enumerate}
\item epiclasses of ring epimorphisms $A\ra B$;
\item bireflective subcategories $\Xcal_B$ of $Mod(A)$, i.e., full subcategories of $Mod(A)$ closed under products, coproducts, kernels and cokernels.
\end{enumerate}
\end{theorem}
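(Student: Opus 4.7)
The plan is to construct mutually inverse maps between the two collections. Forward: send a ring epimorphism $f: A \to B$ to $\mathcal{X}_B := f_*(Mod(B))$. The key input is the classical characterisation that $f$ is a ring epimorphism if and only if the multiplication $B \otimes_A B \to B$ is an isomorphism, equivalently if and only if the restriction $f_*: Mod(B) \to Mod(A)$ is fully faithful. Under this hypothesis $\mathcal{X}_B \simeq Mod(B)$ via $f_*$, and $f_*$ sits in the adjoint triple $(-\otimes_A B) \dashv f_* \dashv Hom_A(B, -)$. Transporting through the equivalence, the inclusion $\iota: \mathcal{X}_B \hookrightarrow Mod(A)$ inherits both adjoints, hence preserves all small limits and colimits; since $Mod(B)$ is bicomplete, those (co)limits computed inside $\mathcal{X}_B$ agree with those computed in $Mod(A)$, establishing closure under products, coproducts, kernels, and cokernels.

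Backward: start with a bireflective subcategory $\mathcal{X}$. In the abelian setting, closure under products/kernels (resp. coproducts/cokernels) amounts to closure under all small limits (resp. colimits). Since $Mod(A)$ is locally presentable, Freyd's adjoint functor theorem yields a left adjoint $\ell$ and a right adjoint $r$ to $\iota$. Set $B := \ell(A)$. The adjunction gives a natural isomorphism $Hom_A(B, X) \cong Hom_A(A, X) \cong X$ for $X \in \mathcal{X}$; specialising to $X = B$ yields a bijection $End_A(B)^{\mathrm{op}} \cong B$, which transports the opposite composition to a ring structure on $B$. The unit $\eta_A: A \to B$ is then a ring homomorphism by construction, and the same isomorphism endows every $X \in \mathcal{X}$ with a right $B$-module structure extending its $A$-structure, embedding $\mathcal{X}$ into $\mathcal{X}_B$.

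The main obstacle is to verify that $\eta_A$ is a ring epimorphism and that $\mathcal{X}_B = \mathcal{X}$. For the epimorphism property, I would exploit idempotency of the reflector: since $B \in \mathcal{X}$, the unit $\eta_B: B \to \ell(B)$ is an isomorphism, and under the identification $\ell(-) \cong -\otimes_A B$ this becomes $B\otimes_A B \cong B$. Establishing this identification is the delicate step: it holds on $A$ tautologically, extends to free modules by coproduct-preservation of the left adjoint $\ell$, and to arbitrary modules by the right exactness of $\ell$ combined with closure of $\mathcal{X}$ under cokernels. For the reverse inclusion $\mathcal{X}_B \subseteq \mathcal{X}$, any $B$-module is a cokernel of a morphism of free $B$-modules, so closure of $\mathcal{X}$ under coproducts and cokernels together with $B \in \mathcal{X}$ suffices. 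That the two assignments are mutually inverse and respect the equivalence relations then follows formally: starting from $f$ one recovers $\ell(A) \cong A \otimes_A B = B$ with its original ring structure, while starting from $\mathcal{X}$ the equality $\mathcal{X}_B = \mathcal{X}$ was just shown.
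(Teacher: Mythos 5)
This statement is quoted in the paper from [GdP, Theorem 1.2] and [GeLe] and is not proved there, so there is no internal proof to compare against; judged on its own, your argument is essentially the standard proof from those sources and is correct in outline. The forward direction is fine: epimorphy of $f$ is equivalent to $f_*$ being fully faithful, and since $f_*$ has both adjoints $(-\otimes_AB$ and $Hom_A(B,-))$ it preserves all limits and colimits, so the essential image is closed under products, coproducts, kernels and cokernels computed in $Mod(A)$. In the backward direction your construction of $B=\ell(A)$, the transport of the ring structure along $Hom_A(B,X)\cong X$, the identification $\ell\cong-\otimes_AB$ via free presentations, the epimorphy from idempotency of the reflector ($B\to B\otimes_AB$ an isomorphism forces the multiplication map to be its inverse), and the two inclusions $\Xcal\subseteq\Xcal_B$ and $\Xcal_B\subseteq\Xcal$ are exactly the classical steps. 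Two places deserve more care than you give them. First, the bare appeal to ``Freyd's adjoint functor theorem'' hides the solution-set condition, which is the real content of the existence of the reflector; the clean justification is that a full subcategory of a locally presentable category closed under all limits and (filtered) colimits is reflective and itself locally presentable, after which cocontinuity of the inclusion gives the coreflector -- or one constructs the reflection of a module explicitly, as the paper does in Section 3 for its particular subcategory $\Ycal$. Second, the extension of $\ell\cong-\otimes_AB$ from $A$ to free modules requires naturality on maps $A\to A$, i.e.\ that $\ell$ of left multiplication by $a$ is left multiplication by $f(a)$ on $B$; this follows from the naturality of the unit together with the reflection isomorphism $Hom_A(B,B)\cong Hom_A(A,B)$, and it is worth saying, since it is also what makes the recovered ring structure on $\ell(A)$ agree with the original $B$ when you check the two assignments are mutually inverse. (Minor point: with right modules the natural identification is $End_A(B)\cong B$ rather than its opposite, but since you define the multiplication by transport this only affects conventions.)
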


Bireflective subcategories $\Xcal$ are precisely those for which the inclusion functor $\Xcal\rightarrow Mod(A)$ admits both a left and a right adjoint. As a consequence, there are left $\Xcal$-approximations $\psi_M:M\rightarrow X_M$, for all $M$ in $Mod(A)$, such that $Hom_A(\psi_M,X)$ is an isomorphism for all $X$ in $\Xcal$. These approximations are called $\Xcal$-\textbf{reflections} and they are left-minimal, i.e., any endomorphism $\theta$ of $X_M$ with $\theta\circ\psi_M=\psi_M$ is an isomorphism. In fact, if $f:A\rightarrow B$ is the ring epimorphism associated with $\Xcal$, then $\psi_M$ is the natural map $M\rightarrow M\otimes_AB$. Dually, there are right $\Xcal$-approximations with analogous properties called $\Xcal$-\textbf{coreflections}.

The following two results will be frequently used throughout.

\begin{proposition}\cite[Theorem 4.8]{Sch}\label{Tor Ext}
Let $A\ra B$ be a ring epimorphism. Then the following are equivalent.
\begin{enumerate}
\item $Tor_1^A(B,B)=0$;
\item $Ext_A^1(M,N)\cong Ext_B^1(M,N)$ for all $B$-modules $M$ and $N$.
\end{enumerate}
\end{proposition}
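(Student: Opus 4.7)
The plan is to exploit two standard consequences of $f \colon A \to B$ being a ring epimorphism: the multiplication map $B \otimes_A B \to B$ is an isomorphism, and restriction of scalars is fully faithful, so that $Hom_A(X, Y) = Hom_B(X, Y)$ for all $B$-modules $X, Y$. I will also repeatedly invoke the tensor-hom adjunction $Hom_A(X, N) \cong Hom_B(X \otimes_A B, N)$, valid for an arbitrary $A$-module $X$ and a $B$-module $N$.

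For the implication $(1) \Rightarrow (2)$, the heart is the following lemma: $Ext_A^1(B, N) = 0$ for every $B$-module $N$. To prove it, I pick a projective $A$-resolution $Q_\bullet \to B$. The adjunction identifies $Hom_A(Q_i, N) \cong Hom_B(Q_i \otimes_A B, N)$, so $Ext_A^\bullet(B, N)$ is the cohomology of the complex $Hom_B(Q_\bullet \otimes_A B, N)$. Each $Q_i \otimes_A B$ is a projective $B$-module; moreover $H_0(Q_\bullet \otimes_A B) = B \otimes_A B \cong B$ by the ring-epi property, and $H_1(Q_\bullet \otimes_A B) = Tor_1^A(B, B) = 0$ by hypothesis. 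So in low degrees $Q_\bullet \otimes_A B$ is an honest projective $B$-resolution of $B$, and $Ext_A^1(B, N) = Ext_B^1(B, N) = 0$. A direct summand argument upgrades this to $Ext_A^1(P, N) = 0$ for every projective $B$-module $P$. For a general $B$-module $M$, pick $0 \to K \to P \to M \to 0$ with $P$ projective over $B$; comparing the long exact $Ext$-sequences produced by $Hom_A(-, N)$ and $Hom_B(-, N)$, the first three $Hom$-terms coincide by full faithfulness of restriction, and both $Ext^1(P, N)$ terms vanish, so $Ext_A^1(M, N) \cong Ext_B^1(M, N)$ as the common cokernel.

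For the implication $(2) \Rightarrow (1)$, I would run an injective cogenerator argument. Take a short exact sequence $0 \to K \to Q \to B \to 0$ of right $A$-modules with $Q$ projective; the long exact Tor-sequence identifies $Tor_1^A(B, B)$ with $L := \ker(K \otimes_A B \to Q \otimes_A B)$. Embed $L \hookrightarrow I$ into some injective $B$-module $I$. By injectivity this extends to $\alpha \colon K \otimes_A B \to I$, which by adjunction corresponds to an $A$-linear map $\bar\alpha \colon K \to I$. Applying (2) with $M = B$ yields $Ext_A^1(B, I) = Ext_B^1(B, I) = 0$, so the restriction $Hom_A(Q, I) \to Hom_A(K, I)$ is surjective and $\bar\alpha$ extends to $\beta \colon Q \to I$. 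Translating $\beta$ back through the adjunction gives $\tilde\beta \colon Q \otimes_A B \to I$ whose restriction to $K \otimes_A B$ is precisely $\alpha$. Hence the original embedding $L \hookrightarrow I$ factors as $L \to K \otimes_A B \to Q \otimes_A B \xrightarrow{\tilde\beta} I$; but $L \to Q \otimes_A B$ is zero by definition of $L$, so the embedding is zero, forcing $L = 0$.

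The main obstacle is the second direction: one has to juggle the Tor long exact sequence, the tensor-hom adjunction that swaps between $A$-linear and $B$-linear maps, and the extension property encoded in the vanishing of $Ext_A^1(B, I)$, in exactly the right order to convert a vanishing $Ext$ into a vanishing kernel. The first direction is more routine once the key lemma $Ext_A^1(B, N) = 0$ is established, which itself hinges on the delicate observation that $Q_\bullet \otimes_A B$ is a projective $B$-resolution of $B$ only near degree zero, but that is exactly what we need to compute $Ext^1$.
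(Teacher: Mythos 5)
Your proposal is correct, but there is nothing internal to compare it with: the paper does not prove this proposition, it simply quotes it from Schofield \cite[Theorem 4.8]{Sch}, so any proof here is necessarily ``external''. Your argument is a valid, self-contained and elementary one. Both directions check out: in $(1)\Rightarrow(2)$ the key lemma $Ext^1_A(B,N)=0$ for all $B$-modules $N$ is established correctly, the extension to projective $B$-modules is routine, and the comparison of the two long exact sequences works because restriction of scalars along a ring epimorphism is fully faithful, so the maps $Hom_A(P,N)\to Hom_A(K,N)$ and $Hom_B(P,N)\to Hom_B(K,N)$ are literally the same map and the two $Ext^1$'s are its common cokernel. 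The only place where your wording is looser than what the argument needs is the phrase ``in low degrees $Q_\bullet\otimes_AB$ is an honest projective $B$-resolution of $B$'': the complex $C_\bullet=Q_\bullet\otimes_AB$ need not be exact beyond degree $1$ (its $H_2$ is $Tor_2^A(B,B)$, which may well be nonzero), but this does not matter, since a $1$-cocycle $C_1\to N$ kills $Im(d_2)=Ker(d_1)$, hence factors through $Im(d_1)$, and $Im(d_1)$ is a direct summand of $C_0$ because $C_0/Im(d_1)\cong B\otimes_AB\cong B$ is $B$-projective; thus $H^1Hom_B(C_\bullet,N)\cong Ext^1_B(B,N)=0$, which is exactly the vanishing you use. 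The converse direction is also sound: the identification $Tor_1^A(B,B)\cong Ker(K\otimes_AB\to Q\otimes_AB)$, the passage back and forth through the adjunction $(-\otimes_AB, f_*)$, and the use of $(2)$ only for $M=B$ and $N$ an injective $B$-module combine correctly to show that the chosen embedding of this kernel into $I$ is zero, forcing $Tor_1^A(B,B)=0$.
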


\begin{proposition}\label{surjective ring epi}
Let $\pi:A\ra\bar{A}$ be a surjective ring epimorphism with kernel $I$. The following holds.
\begin{enumerate}
\item The subcategory $\Xcal_{\bar{A}}$ is closed under quotients and subobjects in $Mod(A)$.
\item $I$ is idempotent if and only if $\Xcal_{\bar{A}}$ is closed under extensions in $Mod(A)$. In this case, we have $\Xcal_{\bar{A}}=I^\circ$. 
\end{enumerate}
\end{proposition}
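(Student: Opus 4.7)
The key observation is that, since $\pi$ is surjective with kernel $I$, the essential image $\Xcal_{\bar A}$ coincides with the full subcategory of $A$-modules $N$ satisfying $NI=0$: a right $\bar A$-module structure on $N$ is nothing but a right $A$-module structure that factors through $\pi$, which is equivalent to $NI=0$. Under this identification, part (1) is immediate, since being annihilated by $I$ is inherited by submodules and quotients.

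For part (2), I plan to prove the equivalence directly in both directions. Assuming $I=I^2$, take a short exact sequence $0\to L\to M\to N\to 0$ with $L$ and $N$ annihilated by $I$. For any $m\in M$ and $i\in I$, the element $mi$ maps to zero in $N$, hence lies in $L$, and multiplying by a further $j\in I$ gives $mij\in LI=0$. Thus $MI^2=0$, and idempotency forces $MI=0$, so $M\in\Xcal_{\bar A}$. Conversely, to deduce idempotency from extension-closure, I would apply the hypothesis to the canonical short exact sequence
$$0\longrightarrow I/I^2\longrightarrow A/I^2\longrightarrow A/I\longrightarrow 0,$$
whose outer terms are visibly annihilated by $I$. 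Extension-closure gives $A/I^2\in\Xcal_{\bar A}$, that is, $I\subseteq I^2$, whence $I=I^2$.

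It remains to identify $\Xcal_{\bar A}$ with $I^\circ$ when $I$ is idempotent. The inclusion $I^\circ\subseteq\Xcal_{\bar A}$ holds unconditionally: if $\Hom_A(I,N)=0$, then for each $n\in N$ the $A$-linear map $I\to N$ defined by $i\mapsto ni$ must vanish, so $NI=0$. For the reverse inclusion, idempotency is essential: given $NI=0$ and any $f\colon I\to N$, each $x\in I=I^2$ can be written as a finite sum $\sum a_jb_j$ with $a_j,b_j\in I$, whence $f(x)=\sum f(a_j)b_j\in NI=0$, so $f$ vanishes.

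I do not foresee any real obstacle, as every step is a routine manipulation of the ideal $I$ once $\Xcal_{\bar A}$ is recognised as the subcategory of $I$-annihilated modules; the only substantive ingredient is the factorisation of elements of $I$ through $I=I^2$ used in the final inclusion.
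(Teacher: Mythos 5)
Your proof is correct, and it takes a genuinely more elementary route than the paper. For part (1) you and the paper argue identically, via the identification $\Xcal_{\bar A}=\{N\in Mod(A)\mid NI=0\}$. For part (2), however, the paper works homologically: it computes $Tor_1^A(\bar A,\bar A)\cong I/I^2$, deduces the equivalence of idempotency with extension-closure from Schofield's comparison of $Ext_A^1$ and $Ext_{\bar A}^1$ (Proposition \ref{Tor Ext}), and then identifies $\Xcal_{\bar A}$ with $I^\circ$ by applying $Hom_A(-,X)$ to $0\to I\to A\to\bar A\to 0$ and using $Ext_A^1(\bar A,X)=0$. You instead verify everything by direct element manipulation: extension-closure from $I=I^2$ via $MI^2\subseteq LI=0$, idempotency from extension-closure via the canonical sequence $0\to I/I^2\to A/I^2\to A/I\to 0$ (a nice touch the paper does not make explicit), the unconditional inclusion $I^\circ\subseteq\Xcal_{\bar A}$ via the maps $i\mapsto ni$, and the reverse inclusion by factoring elements of $I=I^2$ so that $f(x)=\sum f(a_j)b_j\in NI=0$. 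Your argument is self-contained and avoids citing Schofield's theorem altogether; the paper's version is shorter given that Proposition \ref{Tor Ext} is already on the table and has the advantage of making the link $Tor_1^A(\bar A,\bar A)\cong I/I^2$ explicit, which is the form of the statement actually reused later in the homological discussion. Both establish exactly the claimed statement.
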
 

\begin{proof}
(1)  This follows easily from the fact that $\Xcal_{\bar{A}}=\{X\in Mod(A)\mid XI=0\}$.

(2) First observe that $Tor_1^A(\bar{A},\bar{A})=Tor_1^A(A/I,A/I)\cong I/I^2$. The first part of the statement follows then from Proposition \ref{Tor Ext}. Assume now that $I=I^2$. By applying the functor $Hom_A(-,X)$, for an $A$-module $X$, to the short exact sequence induced by $\pi$, we get the exact sequence
$$\xymatrix{0\ar[r] & Hom_A(\bar{A},X)\ar[r]^{\pi_X} & Hom_A(A,X)\ar[r] & Hom_A(I,X)\ar[r] & Ext_A^1(\bar{A},X)\ar[r] & 0}$$
If $X\in\Xcal_{\bar{A}}$, then $\pi_X$ is an isomorphism and, since $Ext_A^1(\bar{A},X)=0$ by Proposition \ref{Tor Ext}, it follows that $X\in I^\circ$. Conversely, if $Hom_A(I,X)=0$, then $\pi_X$ is an isomorphism turning $X$ into an $\bar{A}$-module.
\end{proof}

A ring epimorphism $f:A\ra B$ is said to be \textbf{homological} if for all $i>0$ we have $Tor_i^A(B,B)=0$ or, equivalently, if $Ext^i_B(M,N)\cong Ext^i_A(M,N)$ for all $M$ and $N$ in $Mod(B)$ (see \cite[Theorem 4.4]{GeLe}). Certain homological ring epimorphisms of $A$ induce tilting modules.

\begin{definition}\label{arising} A tilting $A$-module $T$ is said to \textbf{arise from a ring epimorphism}, if there is an injective ring epimorphism $A\ra B$ such that $B\oplus B/A$ is a tilting $A$-module equivalent to $T$.
\end{definition}

In this definition, the ring epimorphism is unique up to equivalence. Moreover, the canonical sequence
\[\xymatrix{0\ar[r]& A\ar[r]^f& B\ar[r]&B/A\ar[r]&0.}\] 
is an approximation sequence as in Proposition \ref{tilt silt}(3).

The following theorem relates ring epimorphisms and tilting modules.

\begin{theorem}\cite[Theorem 3.5, Theorem 3.10]{AS1}\label{ring epis and tilting}
\begin{enumerate}
\item Let $A\ra B$ be an injective homological ring epimorphism such that the $A$-module $B$ has projective dimension at most one. Then $B\oplus B/A$ is a tilting $A$-module and $\Xcal_B$ equals $(B/A)^\perp$.
\item Let $T$ be a tilting $A$-module. Then $T$ arises from a ring epimorphism if and only if there is an $Add(T)$-approximation sequence
\[\xymatrix{0\ar[r]& A\ar[r]&T_0\ar[r]&T_1\ar[r]&0}\]
such that $Hom_A(T_1,T_0)=0$.
\end{enumerate}
\end{theorem}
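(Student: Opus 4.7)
For part (1), the natural candidate for the $\text{Add}(T)$-approximation sequence of $A$ is the canonical short exact sequence
\[\xymatrix{0\ar[r]&A\ar[r]^f&B\ar[r]&B/A\ar[r]&0}\]
coming from the injectivity of $f$. I would first verify that $T=B\oplus B/A$ has projective dimension at most one: this is given for $B$ by hypothesis, and applying $\Hom_A(-,M)$ to the sequence above shows $\Ext_A^2(B/A,M)=0$ for all $M$ (using $\text{pd}(A)=0$ and $\text{pd}(B)\le 1$ in the long exact sequence). It then remains to verify Ext-vanishing for $T$ and identify $\Xcal_B$.

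The key calculation is: for any $X\in\Xcal_B$, the fact that $f$ is a ring epimorphism yields $\Hom_A(B,X)\cong X$, while homologicity together with Proposition \ref{Tor Ext} gives $\Ext_A^1(B,X)\cong\Ext_B^1(B,X)=0$. Feeding these into the long exact sequence of $\Hom_A(-,X)$ applied to the above short exact sequence forces $\Hom_A(B/A,X)=0$ and $\Ext_A^1(B/A,X)=0$, so $X\in (B/A)^\perp$. Conversely, if $M\in (B/A)^\perp$, the same long exact sequence makes $\Hom_A(B,M)\to M$ an isomorphism of abelian groups, through which $M$ inherits a (unique) $B$-module structure, so $M\in\Xcal_B$. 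This proves $\Xcal_B=(B/A)^\perp$. Since both summands of $T$ are $B$-modules and hence lie in $\Xcal_B$, the required Ext-vanishing $\Ext_A^1(T,T^{(\kappa)})=0$ follows, and the displayed sequence serves as the approximation in Proposition \ref{tilt silt}(3), so $T$ is tilting.

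For part (2), the implication $(\Rightarrow)$ is immediate from part (1): if $T=B\oplus B/A$ arises from an injective homological ring epimorphism $f$, the canonical sequence is the approximation sequence, and $\Hom_A(B/A,B)=0$ since $B\in\Xcal_B=(B/A)^\perp$. For the converse, given an $\Add(T)$-approximation sequence $0\to A\to T_0\to T_1\to 0$ with $\Hom_A(T_1,T_0)=0$, I would set $\Xcal:=T_1^\perp$. The hypothesis, combined with $\Ext_A^1(T_1,T_0)=0$ from Proposition \ref{tilt silt}(2), says exactly that $T_0\in\Xcal$, so the given sequence is already the shape of a $\Xcal$-reflection of $A$. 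I would then check that $\Xcal$ is bireflective in $Mod(A)$: closure under products and kernels is standard; closure under coproducts and cokernels uses the tilting hypothesis, in particular that $T_1$ has projective dimension at most one so $\Ext$-vanishing passes to direct limits and cokernels inside $\Xcal$. By Theorem \ref{bireflective}, $\Xcal$ corresponds to an epiclass of ring epimorphisms $f:A\to B$, and by construction the reflection of $A$ is $A\to T_0$, identifying $B\cong T_0$ and $B/A\cong T_1$.

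The main obstacle will be showing that $\Xcal=T_1^\perp$ is actually bireflective rather than merely closed under products, and that the reflection of $A$ into $\Xcal$ coincides with the given map $A\to T_0$ — the latter amounts to left-minimality of the approximation, which should follow from the Ext-vanishing and the hypothesis $\Hom_A(T_1,T_0)=0$ (any endomorphism $\theta$ of $T_0$ fixing the map from $A$ yields, via the snake-style argument in the approximation sequence, an endomorphism of $T_1$ that must be an isomorphism). An alternative, more ring-theoretic route would bypass the bireflective verification by transferring the ring structure from $\End_A(T_0)\cong T_0$ (the isomorphism coming from applying $\Hom_A(-,T_0)$ to the sequence and using $\Hom_A(T_1,T_0)=0=\Ext_A^1(T_1,T_0)$), and then directly checking the universal property of a ring epimorphism for $A\to T_0$; but matching the multiplication on $T_0$ with the right $A$-action requires enough care that I would prefer the bireflective subcategory route.
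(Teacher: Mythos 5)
The paper does not give its own proof of this theorem; it is imported from [AS1, Theorems 3.5, 3.10] as background. So your proposal can only be assessed on its own terms.

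Your argument for part (1) is correct, and the forward direction of part (2) is fine. The issue is in the converse direction of part (2), where you set $\Xcal:=T_1^\perp$ and need to show it is bireflective. Your justification for closure under coproducts --- ``$T_1$ has projective dimension at most one so Ext-vanishing passes to direct limits'' --- is not valid in this generality. For a module $M$ of projective dimension $\le 1$, the functor $\Ext^1_A(M,-)$ commutes with direct sums only when $M$ admits a projective resolution by \emph{finitely generated} projectives (e.g.\ $M$ finitely presented); this is exactly the kind of finiteness one loses when working with large tilting modules, which is the whole point of the paper's setting. As stated, this step does not go through.

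The correct route is to first show the inclusion $T_1^\perp\subseteq \text{Gen}(T)$, so that $T_1^\perp=T_1^\circ\cap\text{Gen}(T)$: given $X\in T_1^\perp$, applying $\Hom_A(-,X)$ to $0\to A\to T_0\to T_1\to 0$ yields $\Ext^1_A(T_0,X)=0$ from $\Ext^1_A(T_1,X)=0$ and $\Ext^1_A(A,X)=0$; since $\phi$ is a $\text{Gen}(T)$-approximation one checks $\text{Gen}(T)=\text{Gen}(T_0)=\text{Gen}(T_0\oplus T_1)$, so $T_0\oplus T_1$ is a tilting module equivalent to $T$ and $T_1^\perp\subseteq(T_0\oplus T_1)^{\perp_1}=\text{Gen}(T)$. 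Once this is in place, closure under coproducts is immediate: $T_1^\circ$ is closed under coproducts (any map $T_1\to\bigoplus X_i$ vanishes if all compositions with the projections vanish), and $\text{Gen}(T)$ is a torsion class, hence closed under coproducts. Products, kernels and cokernels then follow as you indicate (cokernels using $\text{pd}(T_1)\le 1$).

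One further remark: your worry about establishing left-minimality of $\phi:A\to T_0$ is unnecessary. It suffices to check directly that $\Hom_A(\phi,X)$ is bijective for all $X\in\Xcal$ --- surjectivity because $\phi$ is a $\text{Gen}(T)$-approximation and $\Xcal\subseteq\text{Gen}(T)$, injectivity because $\Ker(\Hom_A(\phi,X))\cong\Hom_A(T_1,X)=0$ for $X\in T_1^\circ$. This makes $\phi$ the $\Xcal$-reflection of $A$ by definition, and reflections are automatically left-minimal (as recalled in Section 2 of the paper), so the snake-lemma detour you sketch is not needed. With the coproduct step repaired and the reflection property verified directly, your identification $B\cong T_0$, $B/A\cong T_1$ and the conclusion that $T$ arises from the resulting injective ring epimorphism are sound.
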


\section{Ring epimorphisms arising from partial silting modules}
We start by generalising some ideas from \cite{CTT2} on partial tilting modules. We fix a partial silting $A$-module $T_1$ with associated torsion class $\Dcal_\sigma$ given by a projective presentation $\sigma$ of $T_1$. Since $Gen(T_1)$ is a torsion class by \cite[Lemma 2.3]{AMV}, there are two torsion pairs associated with $T_1$:
$$(\Dcal,\Rcal):=(\Dcal_\sigma, \Dcal_\sigma^{\circ})\,\,\,\text{ and }\,\,\,
(\Tcal,\Fcal):=(Gen(T_1), T_1^{\circ}).$$

We are interested in the full subcategory $\Ycal:=\Dcal\cap\Fcal$ of $Mod(A)$. Note that, by definition,
$$\Fcal=\{X\in Mod(A)|Hom_A(\sigma,X)\text{ is injective}\}$$ 
and, therefore,
$$\Ycal=\{X\in Mod(A)|Hom_A(\sigma,X)\text{ is bijective}\}.$$
We will show that $\Ycal$ is a bireflective subcategory of $Mod(A)$ and, thus, we can associate a ring epimorphism $A\ra B$ such that $\Ycal=\Xcal_B$.

\begin{remark}\label{rem hom tensor} 
Given a ring epimorphism $f:A\ra B$ such that $\Xcal_B=\Ycal$, it follows that $\sigma\otimes_AB$ is an isomorphism (compare to \cite[Theorem 5.2]{Sch2} and \cite[Proposition 3.3(1)]{Ma}). Indeed, if $Hom_A(\sigma,X)$ is an isomorphism for all $X$ in $\Ycal$, using the adjunction $(-\otimes_AB,f_*)$, then so is $Hom_B(\sigma\otimes_AB,X)$. Hence, $Hom_B(T_1\otimes_AB,X)=0$ for all $X$ in $\Ycal$, showing that $T_1\otimes_AB=0$ and that $\sigma\otimes_AB$ is surjective. Now, if we write $\sigma:P\rightarrow Q$, then $Hom_B(\sigma\otimes_AB,P\otimes_AB)$ is an isomorphism. Consequently, the identity map on $P\otimes_AB$ factors through $\sigma\otimes_AB$, proving that $\sigma\otimes_AB$ is also injective, as wanted. Note that, in case $\sigma$ is a map between finitely generated projective modules, $B$ is the universal localisation of $A$ at $\{\sigma\}$ (see Theorem \ref{def:universallocalisation}).
\end{remark}

The following arguments mimic the approach taken in \cite[Proposition 1.4]{CTT2}.

\begin{lemma}\label{closure}
Consider an $A$-module $M$ in $\Ycal$ together with a short exact sequence in $Mod(A)$
$$\xymatrix{0\ar[r] & L\ar[r] & M\ar[r] & N\ar[r] & 0.}$$
Then we have the following equivalent conditions
$$L\in\Ycal\Leftrightarrow L\in\Dcal\Leftrightarrow N\in\Fcal\Leftrightarrow N\in\Ycal.$$
\end{lemma}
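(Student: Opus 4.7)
The plan is to reduce everything to one diagram chase using the snake lemma. Apply the functor $\Hom_A(-,?)$ to both the projective presentation $\sigma: P \to Q$ of $T_1$ and the short exact sequence $0 \to L \to M \to N \to 0$. Since $P$ and $Q$ are projective, both $\Hom_A(P,-)$ and $\Hom_A(Q,-)$ are exact, so one obtains a commutative diagram with exact rows
$$\xymatrix{0 \ar[r] & \Hom_A(Q,L) \ar[r] \ar[d]_{\alpha_L} & \Hom_A(Q,M) \ar[r] \ar[d]_{\alpha_M} & \Hom_A(Q,N) \ar[r] \ar[d]_{\alpha_N} & 0 \\ 0 \ar[r] & \Hom_A(P,L) \ar[r] & \Hom_A(P,M) \ar[r] & \Hom_A(P,N) \ar[r] & 0}$$
where $\alpha_X = \Hom_A(\sigma,X)$. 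The assumption $M \in \Ycal$ says precisely that $\alpha_M$ is bijective, i.e.\ $\ker \alpha_M = 0 = \coker \alpha_M$.

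The snake lemma now gives the exact sequence
$$0 \to \ker \alpha_L \to \ker \alpha_M \to \ker \alpha_N \to \coker \alpha_L \to \coker \alpha_M \to \coker \alpha_N \to 0,$$
which, after cancelling the vanishing middle terms, collapses to $\ker \alpha_L = 0$, $\coker \alpha_N = 0$, and an isomorphism $\ker \alpha_N \cong \coker \alpha_L$. The middle statement says $N \in \Dcal$, so $N$ automatically satisfies half of the conditions for membership in $\Ycal$. Similarly, $L \in \Fcal$ is automatic: either by reading off $\ker \alpha_L = 0$, or more directly because $\Fcal = T_1^{\circ}$ is torsion-free and hence closed under submodules of $M \in \Fcal$.

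It then only remains to combine the isomorphism $\ker \alpha_N \cong \coker \alpha_L$ with the definitions: $L \in \Dcal$ means $\coker \alpha_L = 0$, and $N \in \Fcal$ means $\ker \alpha_N = 0$. The isomorphism turns this into the single equivalence $L \in \Dcal \Leftrightarrow N \in \Fcal$, which, together with the automatic memberships $L \in \Fcal$ and $N \in \Dcal$, yields the chain of equivalences as stated. There is no substantial obstacle here; the proof is essentially a clean application of the snake lemma, the only point to watch being that one must use the projectivity of \emph{both} $P$ and $Q$ to get short exactness of both rows, and that $\Ycal$-membership is characterised by $\alpha_{(-)}$ being bijective rather than by two separate properties.
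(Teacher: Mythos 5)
Your proof is correct and follows essentially the same route as the paper: the same two-row diagram obtained by applying $\Hom_A(Q,-)$ and $\Hom_A(P,-)$ to the short exact sequence, with the Snake Lemma giving the equivalence $L\in\Dcal\Leftrightarrow N\in\Fcal$ once $\Hom_A(\sigma,M)$ is known to be bijective. The only cosmetic difference is that you read off the automatic memberships $L\in\Fcal$ and $N\in\Dcal$ from the snake sequence, whereas the paper deduces them directly from $\Fcal$ being torsion-free and $\Dcal$ a torsion class.
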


\begin{proof}
Since $M$ belongs to $\Ycal=\Dcal\cap\Fcal$, we know that $L$ lies in $\Fcal$ and $N$ lies in $\Dcal$. This proves the two outer equivalences. For the remaining one consider the following commutative diagram induced by $\sigma:P\ra Q$
$$\xymatrix{0\ar[r] & Hom_A(Q,L)\ar[r]\ar[d]^{Hom_A(\sigma,L)} & Homa_A(Q,M)\ar[r]\ar[d]^{Hom_A(\sigma,M)}_{\cong} & Hom_A(Q,N)\ar[r]\ar[d]^{Hom_A(\sigma,N)} & 0\\ 0\ar[r] & Hom_A(P,L)\ar[r] & Hom_A(P,M)\ar[r] & Hom_A(P,N)\ar[r] & 0.}$$
By the Snake Lemma, $Hom_A(\sigma,L)$ is surjective if and only if $Hom_A(\sigma,N)$ is injective.
\end{proof}

We can construct left $\Ycal$-approximations for any $A$-module $X$. Take a $\Dcal$-approximation sequence 
$$\xymatrix{X\ar[r]^{\phi_X} & M_X\ar[r] & T_1^{(I)}\ar[r] & 0,}$$
which is constructed as in \cite[Theorem 3.15]{AMV}. Recall that $T:=T_1\oplus M_A$ is a silting module with $Gen(T)=\Dcal$.
Now, consider the composition 
$$\psi_X:\xymatrix{X\ar[r]^{\phi_X} & M_X\ar@{>>}[r]^{q_X\ \ \ \ \ \ \ \ \ \ \ \ \ \ \ } & M_X/\tau_{T_1}(M_X)=:\overline{M}_X}$$
where $\tau_{T_1}$ denotes the trace of $T_1$, which is the torsion-radical with respect to $\Tcal$. Note that it is clear by construction that $\overline{M}_X$ lies in $\Ycal$ for any $A$-module $X$ and, moreover, since $\psi_X$ is the composition of a left $\Dcal$-approximation and a left $\Fcal$-approximation, it is a left $\Ycal$-approximation.

\begin{proposition}\label{prop bireflective}\label{reflections}
The full subcategory $\Ycal$ of $Mod(A)$ is bireflective and extension-closed.  Moreover, the $\Ycal$-reflection of an $A$-module $X$ is given by $\psi_X$.
\end{proposition}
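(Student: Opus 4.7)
The plan is to invoke Theorem~\ref{bireflective} and thereby reduce bireflectivity of $\Ycal$ to verifying closure under products, coproducts, kernels, and cokernels. Extension-closure will come for free along the way: $\Dcal$ is extension-closed as a torsion class and $\Fcal$ as a torsion-free class, so the intersection $\Ycal = \Dcal \cap \Fcal$ is as well.

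For products, I use the isomorphism $Hom_A(\sigma, \prod_i Y_i) \cong \prod_i Hom_A(\sigma, Y_i)$, under which both surjectivity and injectivity pass to products; hence both $\Dcal$ and $\Fcal$ (and so $\Ycal$) are product-closed. Coproducts are trickier: $\Dcal$ is a torsion class and so is closed under coproducts automatically, but for $\Fcal = T_1^\circ$ I argue directly that, given $Y_i \in \Fcal$ and any $f\colon T_1 \to \bigoplus_i Y_i$, each composition $\pi_j \circ f$ lies in $Hom_A(T_1, Y_j) = 0$, forcing $f = 0$. For kernels and cokernels I appeal to Lemma~\ref{closure}: given $f\colon M \to N$ with $M, N \in \Ycal$, the image $\Img f$ is a quotient of $M \in \Dcal$ and a submodule of $N \in \Fcal$, whence $\Img f \in \Ycal$; Lemma~\ref{closure} applied to $0 \to \Ker f \to M \to \Img f \to 0$ and to $0 \to \Img f \to N \to \Coker f \to 0$ then yields $\Ker f, \Coker f \in \Ycal$.

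To verify $\psi_X$ is the $\Ycal$-reflection, I first note that $\overline{M}_X$ lies in $\Ycal$ by construction: it is a quotient of $M_X \in \Dcal$ (so in $\Dcal$), and it lies in $\Fcal$ after modding out the $T_1$-trace. For the universal property, take $Y \in \Ycal$ and $f\colon X \to Y$: since $\phi_X$ is a left $\Dcal$-approximation and $Y \in \Dcal$, we obtain $g\colon M_X \to Y$ with $g \phi_X = f$; because $Y \in \Fcal$, the map $g$ vanishes on $\tau_{T_1}(M_X) \in Gen(T_1)$ and so descends to $\bar g\colon \overline{M}_X \to Y$ with $\bar g \psi_X = f$. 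For uniqueness, I observe that $\Coker \psi_X$ is a quotient of $\Coker \phi_X = T_1^{(I)}$ and hence lies in $\Tcal = Gen(T_1)$; thus any $h\colon \overline{M}_X \to Y$ with $h \psi_X = 0$ factors through $\Coker \psi_X \in \Tcal$ and must be zero since $Hom_A(\Tcal, \Fcal) = 0$. The main obstacles I anticipate are (i) the coproduct closure of $\Fcal$, where a hands-on projection argument replaces any generic torsion-theoretic invocation, and (ii) identifying $\Coker \psi_X$ with a $T_1$-generated module to secure uniqueness in the reflection.
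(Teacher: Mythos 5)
Your proof is correct and follows essentially the same route as the paper: closure under products via $Hom_A(\sigma,\prod X_i)\cong\prod Hom_A(\sigma,X_i)$, kernels and cokernels via the image trick combined with Lemma~\ref{closure}, and the reflection property by showing $Hom_A(\psi_X,Y)$ is bijective, with injectivity coming from $Coker(\psi_X)$ being a quotient of $T_1^{(I)}$ and hence in $Gen(T_1)$. The only difference is that you spell out two points the paper treats as immediate (coproduct-closure of $\Fcal=T_1^\circ$ and the factorisation showing $\psi_X$ is a left $\Ycal$-approximation), which is harmless.
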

\begin{proof}
We have to show that $\Ycal=\{X\in Mod(A)|Hom_A(\sigma,X)\text{ is bijective}\}$ is closed under products, coproducts, kernels, cokernels and extensions. Clearly, $\Ycal$ is closed under extensions and coproducts, since so are $\Dcal$ and $\Fcal$. Note that $\Ycal$ is also closed under products, since $Hom_A(\sigma,\prod X_i)=\prod Hom_A(\sigma,X_i)$ for $X_i$ in $Mod(A)$ and products are exact. Finally, take a map $\omega:M\ra N$ in $\Ycal$. Clearly, $Im(\omega)$ belongs to $\Ycal$, since it is a quotient of $M$ (thus, in $\Dcal$) and a submodule of $N$ (thus, in $\Fcal$). Now the claim follows by Lemma \ref{closure}.

It remains to show that $Hom_A(\psi_X,Y)$ is an isomorphism for all $Y$ in $\Ycal$. It is clearly a surjection, since $\psi_X$ is a left $\Ycal$-approximation. 
Moreover, we have $Ker(Hom_A(\psi_X,Y))=Hom_A(Coker(\psi_X),Y)$. Since $Y$ lies in $\Fcal$, it suffices to show that $Coker(\psi_X)$ is in $\Tcal=Gen(T_1)$. The following commutative diagram with surjective vertical maps finishes the proof
$$\xymatrix{X\ar[r]^{\phi_X}\ar@{=}[d] & M_X\ar[r]\ar[d] & T_1^{(I)}\ar[r]\ar[d] & 0 \\ X\ar[r]^{\psi_X} & \overline{M}_X\ar[r] & Coker(\psi_X)\ar[r] & 0.}\vspace{-0.4cm}$$
\end{proof}

Hence, it follows from Theorem \ref{bireflective} and Proposition \ref{Tor Ext} that we can associate to every partial silting module a ring epimorphism $A\ra B$ with $Tor_1^A(B,B)=0$. Moreover, we can choose $B$ to be $End_A(M_A/\tau_{T_1}(M_A))$ (see, for example, \cite{AS1} for details).
These ring epimorphisms are closely related to the ring epimorphisms built in \cite{CTT}, as the following lemma shows.

\begin{lemma}\label{CTT compare}
Let $T_1$ be a partial silting module and $T=T_1\oplus M_A$ its Bongartz completion to a silting module as above. Let $f:A\rightarrow B$ be the ring epimorphism associated with $T_1$ and $\pi:A\rightarrow \bar{A}:=A/Ann(T)$ the canonical projection. Then there is a ring epimorphism $g:\bar{A}\rightarrow B$ satisfying $f=g\circ \pi$ and such that $$Im(g_*)=Ker(Hom_{\bar{A}}(T_1,-))\cap Ker(Ext^1_{\bar{A}}(T_1,-)).$$
\end{lemma}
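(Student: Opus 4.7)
The plan is to handle the two assertions in sequence: produce the factorization $f=g\circ\pi$ first, and then identify $Im(g_*)$ explicitly. For the factorization, observe that $\Xcal_B=\Ycal\subseteq\Dcal=Gen(T)$; since $T$ is an $\bar A$-module, any object generated by $T$ is annihilated by $Ann(T)$, so $\Xcal_B\subseteq\Xcal_{\bar A}$. In particular $B\in\Xcal_B$ is an $\bar A$-module, hence $f$ kills $Ann(T)$ and factors uniquely as $f=g\circ\pi$ with $g\colon\bar A\to B$; $g$ is a ring epimorphism because $f$ is.

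Under this factorization $Im(g_*)$ is $\Xcal_B$ re-read inside $Mod(\bar A)$. By Proposition~\ref{tilt silt}(1), $T=T_1\oplus M_A$ is tilting over $\bar A$, so $Gen(T)$ taken in $Mod(\bar A)$ equals $T^{\perp_1}=T_1^{\perp_1}\cap M_A^{\perp_1}$ over $\bar A$. Since an $\bar A$-module $X$ lies in $\Dcal=Gen(T)$ over $A$ iff it lies in $Gen(T)$ over $\bar A$, and $\Fcal\cap Mod(\bar A)=Ker(Hom_{\bar A}(T_1,-))$, we obtain
\[
Im(g_*)=Ker(Hom_{\bar A}(T_1,-))\cap Ker(Ext^1_{\bar A}(T_1,-))\cap Ker(Ext^1_{\bar A}(M_A,-)),
\]
and the claim is reduced to showing that, on $\bar A$-modules, the last condition is implied by the second.

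The decisive step is to refine the $Gen(T)$-approximation sequence $A\xrightarrow{\phi_A}M_A\to T_1^{(I)}\to 0$ to a short exact sequence $0\to\bar A\to M_A\to T_1^{(I)}\to 0$ in $Mod(\bar A)$, by identifying $K:=Ker(\phi_A)$ with $Ann(T)$. One inclusion $Ann(T)\subseteq K$ is clear from $M_A\in Mod(\bar A)$. For the reverse, factor $\phi_A$ as $A\twoheadrightarrow A/K\hookrightarrow M_A$: for every $X\in Gen(T)$ the surjection $Hom_A(M_A,X)\twoheadrightarrow Hom_A(A,X)=X$ factors through $Hom_A(A/K,X)=\{x\in X\mid xK=0\}$, forcing $XK=0$. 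Taking $X=T$ yields $K\subseteq Ann(T)$. Applying $Hom_{\bar A}(-,X)$ to the resulting short exact sequence and using $Ext^1_{\bar A}(\bar A,-)=0$ then delivers the implication $Ext^1_{\bar A}(T_1,X)=0\Rightarrow Ext^1_{\bar A}(M_A,X)=0$, completing the identification.

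I expect the main obstacle to be the equality $K=Ann(T)$; once the approximation is rewritten as a short exact sequence with left term $\bar A$, the rest is a routine long-exact-sequence argument combined with the classical tilting property of $T$ over $\bar A$.
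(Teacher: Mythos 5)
Your proof is correct and follows essentially the same route as the paper: factor $f$ through $\pi$ by observing $\Xcal_B\subseteq\Xcal_{\bar A}$, realize $\Ycal$ inside $Mod(\bar A)$, and then use the short exact sequence $0\to\bar A\to M_A\to T_1^{(I)}\to 0$ together with tilting of $T$ over $\bar A$ to eliminate the $Ext^1_{\bar A}(M_A,-)$ condition. The one place where you add genuine detail is the verification that $Ker(\phi_A)=Ann(T)$ (so that the approximation sequence really starts at $\bar A$), which the paper asserts without comment; your argument for it is correct and worth making explicit.
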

\begin{proof}
Since $Gen(T)$ is contained in $Im(\pi_*)$, we can naturally identify $\Ycal$ with a bireflective subcategory of $Mod(\bar{A})$. Let $g:\bar{A}\rightarrow B$ be the associated ring epimorphism. It clearly follows that $f=g\circ \pi$. Now, the left $Gen(T)$-approximation $\phi_A:A\rightarrow M_A$ induces a short exact sequence in $Mod(\bar{A})$ of the form
$$\xymatrix{0\ar[r] & \bar{A}\ar[r]^{\phi_A} & M_A\ar[r] & T_1^{(I)}\ar[r] & 0.}$$
Since, by Proposition \ref{tilt silt}(1), $T$ is a tilting $\bar{A}$-module, it follows that $Gen(T)=Ker(Ext^1_{\bar{A}}(T_1,-))$ in $Mod(\bar{A})$. By the definition of $\Ycal$, it then follows that $Im(g_*)=Ker(Hom_{\bar{A}}(T_1,-))\cap Ker(Ext^1_{\bar{A}}(T_1,-)),$ as wanted.
\end{proof}

The following theorem describes explicitly the ring epimorphism arising from a partial silting module.

\begin{theorem}\label{thm2}
Let $T_1$ be a partial silting module, $T=T_1\oplus M_A$ its Bongartz completion to a silting module as above and $f:A\rightarrow B$ the ring epimorphism associated with $T_1$. Then there is an isomorphism of rings between $B$ and $End_A(T)/I$, where $I$ is the two-sided ideal given by the endomorphisms of $T$ factoring through an object in $Add(T_1)$. Furthermore, the ideal $I$ is idempotent.
\end{theorem}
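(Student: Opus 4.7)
The plan is to construct a surjective ring homomorphism $\Phi\colon S:=\End_A(T)\to B$ with $\ker \Phi = I$, after identifying $B$ with $\End_A(N)$, where $N := M_A/\tau_{T_1}(M_A)$ is the $\Ycal$-reflection of $A$. For $\alpha\in S$, decomposing into blocks $\alpha_{ij}$ with respect to $T = T_1\oplus M_A$, functoriality of the torsion radical $\tau_{T_1}$ forces the $(2,2)$-entry $\alpha_{22}\colon M_A\to M_A$ to descend to $\bar\alpha_{22}\in\End_A(N)$, and I set $\Phi(\alpha):=\bar\alpha_{22}$. That $\Phi$ is a ring homomorphism will follow from the observation that, in the $(2,2)$-block of a composition $\alpha\beta$, the mixed term $\alpha_{21}\beta_{12}$ factors through $T_1$, hence has image in $\tau_{T_1}(M_A)$ and vanishes modulo the trace.

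For surjectivity, I will lift each $\xi\in\End_A(N)$ through the sequence $0\to\tau_{T_1}(M_A)\to M_A\to N\to 0$, using $\Ext_A^1(M_A,\tau_{T_1}(M_A))=0$, which holds because $M_A$ is Ext-projective in $\Dcal=Gen(T)$ (Proposition~\ref{tilt silt}(2)) while $\tau_{T_1}(M_A)\in Gen(T_1)\subseteq\Dcal$. The inclusion $I\subseteq\ker\Phi$ is immediate since any map factoring through $Add(T_1)$ sends $M_A$ into $\tau_{T_1}(M_A)$. The reverse inclusion is the main technical point: for $\alpha\in\ker\Phi$, only $\alpha_{22}$ requires attention (the other three blocks already pass through $T_1\subseteq T$), so the task is to exhibit $\alpha_{22}$, with image in $\tau_{T_1}(M_A)$, as factoring through an object of $Add(T_1)$. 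For this I will use the Bongartz sequence $0\to A\to M_A\to T_1^{(I_0)}\to 0$ (reducing to $A/Ann(T)$ via Lemma~\ref{CTT compare} if necessary for injectivity): the composite $A\to M_A\to\tau_{T_1}(M_A)$ factors through some $T_1^{(J)}$ since $A$ is cyclic and $\tau_{T_1}(M_A)\in Gen(T_1)$; by the vanishing $\Ext_A^1(T_1^{(I_0)},T_1^{(J)})=0$ (from Ext-projectivity of $T_1$ in $\Dcal$ applied to $T_1^{(J)}\in\Dcal$), the chosen lift $A\to T_1^{(J)}$ extends to $M_A\to T_1^{(J)}$; and the resulting discrepancy with $\alpha_{22}$ vanishes on $A$ and so factors through the quotient $T_1^{(I_0)}$, presenting $\alpha_{22}$ as a map through $T_1^{(J)}\oplus T_1^{(I_0)}\in Add(T_1)$.

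For idempotency of $I$, I will invoke Proposition~\ref{surjective ring epi}(2) applied to the surjective ring epimorphism $S\to S/I$: idempotency of $I$ is equivalent to the bireflective subcategory $\Xcal_{S/I}$ being closed under extensions in $Mod(S)$. To prove this, I will establish that the adjunction $(-\otimes_S T,\ \Hom_A(T,-))$ restricts to an equivalence $\Ycal\simeq\Xcal_{S/I}$, using $\Hom_A(T,X)=\Hom_A(M_A,X)\cong X$ for $X\in\Ycal$ (since $\Hom_A(T_1,X)=0$) together with $(S/I)\otimes_S T\cong N\cong S/I$ as right $S/I$-modules. Given a short exact sequence in $Mod(S)$ with ends in $\Xcal_{S/I}$, I will transport it to $Mod(A)$ via $-\otimes_S T$, use extension-closure of $\Ycal$ (Proposition~\ref{prop bireflective}) to place the middle term in $\Ycal$, and then apply $\Hom_A(T,-)$ together with the vanishing $\Ext_A^1(T,-)=0$ on $\Ycal$ (Ext-projectivity of $T$ in $\Dcal$) and the adjunction unit to conclude, via the five-lemma, that the original middle term lies in $\Xcal_{S/I}$. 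The main obstacle will be the vanishing $Tor_1^S(X,T)=0$ for $X\in\Xcal_{S/I}$, required so that the transported sequence remains short exact in $Mod(A)$; this rests on the specific $(S,A)$-bimodule structure of $T$ and is the most delicate point of the argument.
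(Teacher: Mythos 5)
Your construction of $\Phi$, the verification that it is a ring homomorphism (via the trace-preservation observation), and the surjectivity argument using $\Ext_A^1(M_A,\tau_{T_1}(M_A))=0$ all match the paper's Steps~1 and~2. For the kernel computation you take a genuinely different route: the paper applies $\Hom_A(\sigma,-)$ to the sequence $0\to \Ker(\mu)\to T_1^{(J)}\xrightarrow{\mu}\tau_{T_1}(M_A)\to 0$, uses the Snake Lemma to place $\Ker(\mu)$ in $\Dcal_\sigma$, and then lifts $\omega\colon M_A\to\tau_{T_1}(M_A)$ directly through $\mu$ via $\Ext_A^1(M_A,\Ker(\mu))=0$. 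You instead push $\omega$ along the Bongartz sequence and use Ext-projectivity of $T_1$ to extend the lift from $A$ to $M_A$; this works, but only once you pass to $\bar A=A/\mathrm{Ann}(T)$, since the $A$-approximation sequence $A\to M_A\to T_1^{(I_0)}\to 0$ need not be monic on the left and your step ``vanishes on $A$, extend through the cokernel'' uses the long exact sequence attached to a \emph{short} exact sequence. You flag this, which is fine, but note also that the reason the composite $A\to\tau_{T_1}(M_A)$ lifts through a surjection $T_1^{(J)}\twoheadrightarrow\tau_{T_1}(M_A)$ is projectivity of $A$ (or $\bar A$), not that $A$ is cyclic; cyclicity is a red herring there.

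The real issue is the idempotency of $I$. You correctly reduce it to extension-closure of $\Xcal_{S/I}$ via Proposition~\ref{surjective ring epi}(2), and you correctly identify the bottleneck: exactness of $-\otimes_S T$ applied to a short exact sequence in $\mathrm{Mod}(S)$ whose outer terms lie in $\Xcal_{S/I}$, i.e.\ the vanishing $\mathrm{Tor}_1^S(Z,T)=0$ for $Z$ in the image of $p_*$. But you then stop, describing this as ``the most delicate point'' without resolving it — so the proof as written has a genuine gap precisely where the paper's argument does real work. The paper closes it by first proving the commutation $p_*\cong \Hom_A(T,f_*(-))$ (your $\Hom_A(T,X)\cong X$ for $X\in\Ycal$ is the same observation), then reducing to the case where $T$ is a \emph{tilting} module over $\bar A$ via Lemma~\ref{CTT compare} and Proposition~\ref{tilt silt}(1), and finally invoking Bazzoni's result that for a tilting module $T$ the functor $\mathrm{Tor}_1^{\End_A(T)}(\Hom_A(T,-),T)$ is identically zero, together with the Colpi--Trlifaj fact that $\Hom_A(T,-)\otimes_{\End_A(T)}T$ is the identity on $\mathrm{Gen}(T)$. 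Without this reduction and these two external inputs, the needed $\mathrm{Tor}$-vanishing does not follow from the adjunction bookkeeping alone, and your five-lemma conclusion cannot get off the ground. So: your overall architecture for Step~4 is sound and parallel to the paper's, but the proof is incomplete until you either import the Bazzoni/Colpi--Trlifaj facts after the tilting reduction, or supply an independent argument for $\mathrm{Tor}_1^S(p_*(Z),T)=0$.
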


\begin{proof}
For simplicity, throughout this proof we denote $M_A$ by $M$.  According to the discussion above, we fix the ring $B$ to be $End_A(\overline{M})$, where $\overline{M}=M/\tau_{T_1}(M)$. We prove our theorem in several steps. First, we define a surjective ring homomorphism $p:End_A(T)\ra End_A(\overline{M})$, thus yielding an isomorphism of rings $\bar{p}:End_A(T)/Ker(p)\rightarrow End(\overline{M})$. In a second step, we will show that $Ker(p)$ coincides with the ideal $I$ of endomorphisms of $T$ factoring through an object in $Add(T_1)$. Steps 3 and 4 show that $I$ is idempotent.

\textbf{Step 1: A surjective ring homomorphism.} Consider the short exact sequence
$$\xymatrix{0\ar[r] & \tau_{T_1}(M)\ar[r]^{\,\,\,\,\,i} & M\ar[r]^q & \overline{M}\ar[r] & 0.}$$ Note that the trace of $T_1$ in $M$ will be preserved by any endomorphism of $M$. Consequently, for all $a$ in $End_A(M)$ there is a unique endomorphism $\bar{a}$ in $End_A(\overline{M})$ such that $\bar{a}\circ q=q\circ a$. Endomorphisms of $M$ lie in \textit{a corner} of the endomorphism ring of $T$, which can be written as a $2\times 2$-matrix as follows
$$End_A(T)=\left( \begin{array}{cc}
End_A(M) & Hom_A(T_1,M) \\
Hom_A(M,T_1) & End_A(T_1) \end{array} \right).$$
We define the following map
$$\xymatrix{p:End_A(T)\ar[r]& End(\overline{M})=B}, \ \ \ \ \ \left( \begin{array}{cc} a & b \\ c & d \end{array} \right)\mapsto \bar{a}.$$
It is straightforward to check that $p$ is a ring homomorphism. To show that it is surjective, it is enough to see that endomorphisms of $\overline{M}$ lift to endomorphisms of $M$.  Indeed, by applying the functor $Hom_A(M,-)$ to the short exact sequence above, we get the exact sequence
$$\xymatrix{End_A(M)\ar[r] & Hom_A(M,\overline{M})\ar[r] & Ext_A^1(M,\tau_{T_1}(M)).}$$
Since $M\in Add(T)$ and $\tau_{T_1}(M)\in Gen(T)$, by Proposition \ref{tilt silt}(2), we know that $Ext_A^1(M,\tau_{T_1}(M))=0$. Hence, for any $\delta$ in $End_A(\overline{M})$ there is a morphism $a$ in $End_A(M)$ such that $\delta\circ q=q\circ a$, showing that $p$ is surjective.

\textbf{Step 2: Computing the kernel.} We now show that an endomorphism $\gamma$ of $T$ belongs to the kernel of $p$ if and only if it factors through a module in $Add(T_1)$. Write 
$$\gamma=\left( \begin{array}{cc} a & b \\ c & d \end{array} \right)$$ 
and suppose first that $\gamma$ factors through $Add(T_1)$. In particular, the image of $\gamma$ lies in the trace of $T_1$ in $T$. This shows that $Im(a)\subseteq \tau_{T_1}(M)$ and, thus, $\bar{a}\circ q = q\circ a=0$. Since $q$ is surjective, $p(\gamma)=\bar{a}=0$, as wanted. Conversely, if $\gamma$ lies in the kernel of $p$, we get $q\circ a=0$, meaning that the image of $a$ lies in the trace of $T_1$. Hence, there is a map $\omega:M\ra\tau_{T_1}(M)$ making the following diagram commute
$$\xymatrix{& & M\ar[d]^{a}\ar[ld]_\omega & & \\ 0\ar[r] & \tau_{T_1}(M)\ar[r]^i & M\ar[r]^q & \overline{M}\ar[r] & 0.}$$
Let $J$ be the set $Hom_A(T_1,\tau_{T_1}(M))$ and consider the short exact sequence induced by the universal map $\mu$
$$\xymatrix{0\ar[r] & Ker(\mu)\ar[r] & T_1^{(J)}\ar[r]^{\!\!\!\!\!\!\!\mu} & \tau_{T_1}(M)\ar[r] & 0.}$$ 
Using the presentation $\sigma:P\ra Q$ of $T_1$ defining $\Dcal_\sigma=Gen(T)$, we get the following commutative diagram
\begin{equation}\nonumber
\xymatrix{& 0\ar[d] & 0\ar[d] & 0\ar[d] & \\ 0\ar[r] & Hom_A(T_1,Ker(\mu))\ar[r]\ar[d] & Hom_A(T_1,T_1^{(J)})\ar[r]^{\!\!\!\!\!\kappa}\ar[d] & Hom_A(T_1,\tau_{T_1}(M))\ar[d]\ar[r] & 0 \\ 0\ar[r] & Hom_A(Q,Ker(\mu))\ar[r]\ar[d]^{\eta} & Hom_A(Q,T_1^{(J)})\ar[r]\ar[d] & Hom_A(Q,\tau_{T_1}(M))\ar[r]\ar[d] & 0\\ 0\ar[r] & Hom_A(P,Ker(\mu))\ar[r] & Hom_A(P,T_1^{(J)})\ar[r]\ar[d] & Hom_A(P,\tau_{T_1}(M))\ar[r]\ar[d] & 0 \\ & & 0 & 0 & &}
\end{equation}
where $\kappa=Hom_A(T_1,\mu)$ is surjective by the definition of $\mu$. By the Snake Lemma, it follows that $\eta$ is surjective and $Ker(\mu)$ lies in $\Dcal_\sigma$. Consequently, since $M$ is in $Add(T)$, we get $Ext_A^1(M,Ker(\mu))=0$, using Proposition \ref{tilt silt}(2). Hence, there is a map $\nu:M\ra T_1^{(J)}$ such that $\omega=\mu\circ\nu$. We conclude that $a=i\circ\omega=i\circ\mu\circ\nu$ factors through $Add(T_1)$. Now, $\gamma$ can be decomposed as the sum of four endomorphisms of $T$ induced naturally from the maps $a$, $b$, $c$ and $d$. It is clear that the endomorphisms induced by $b$, $c$ and $d$ factor through $Add(T_1)$ and by the arguments above so does the endomorphism induced by $a$. Since the endomorphisms factoring through $Add(T_1)$ form an ideal of $End_A(T)$, it follows that $\gamma$ lies in that ideal, as wanted.

\textbf{Step 3: A commutative diagram of functors.} 
The steps above show that the canonical projection $End_A(T)\rightarrow End_A(T)/Ker(p)$ lies in the same epiclass as $p:End_A(T)\rightarrow B$. Hence, in order to show that the ideal $I=Ker(p)$ is idempotent, it is enough to check that $Im(p_*)$ is extension-closed in $Mod(End_A(T))$ (see Proposition \ref{surjective ring epi}(2)).
In this step, we show that the restriction functors $p_*$ and $f_*$ satisfy the relation $p_*\cong Hom_A(T,f_*(-))$, i.e., there is a commutative diagram of functors
$$\xymatrix{& Mod(A)\ar[dd]^{Hom_A(T,-)}\\ Mod(B)\ar[ru]^{f_*}\ar[rd]^{p_*}& \\ & Mod(End_A(T)). }$$
Recall that $p_*$ and $f_*$ can be rewritten as $Hom_B(B,-)$, where $B$ is regarded, respectively, as a left $End_A(T)$-module via $p$ and as a left $A$-module via $f$. Hence, using the adjunction
$$Hom_A(T,f_*(-))\cong Hom_B(T\otimes_AB,-)$$
it is enough to check that $B$ and $T\otimes_AB$ are isomorphic as left $End_A(T)$-modules. 
Since $T=M\oplus T_1$ and, by construction $Hom_A(T_1,\Xcal_B)=0$, we have $T\otimes_AB = M\otimes_AB$. But by Proposition \ref{prop bireflective} we have $M\otimes_AB\cong \overline{M}$, so there is an isomorphism $s:T\otimes_AB\rightarrow B$ of right $A$-modules given by $s((m,t_1)\otimes x)=q(m)x$, for $m\in M$, $t_1\in T_1$ and $x\in B$. We check that $s$ is also a map of left $End_A(T)$-modules. On one hand, we have that 
$$s(\left(\begin{array}{cc} a & b \\ c & d \end{array} \right)(m,t_1)\otimes x)=s((a(m)+b(t_1),c(m)+d(t_1))\otimes x)=q(a(m))x.$$
On the other hand, using the left action of $End_A(T)$ on $B$ via $p$ we have, as wanted,
$$\left(\begin{array}{cc} a & b \\ c & d \end{array} \right)q(m)x=\bar{a}q(m)x=q(a(m))x.$$

\textbf{Step 4: Reducing to the tilting case.}
Consider now the ring $\bar{A}=A/Ann(T)$ and the canonical projection $\pi:A\rightarrow \bar{A}$. By Lemma \ref{CTT compare} and Step 3, we get that $p_*\cong Hom_{\bar{A}}(T,\bar{f}_*(-))$, where $\bar{f}:\bar{A}\rightarrow B$ is the ring epimorphism given by the factorisation $f=\bar{f}\circ \pi$.  Since $T$ is an $\bar{A}$-tilting module (see Proposition \ref{tilt silt}(1)), we may assume without loss of generality that $T$ is a tilting $A$-module. 
Then the following useful facts hold true for  $T$.
\begin{enumerate}
\item \cite[Proposition 3.4]{B}  The functor $Tor_1^{End_A(T)}(Hom_A(T,-),T)$ is identically zero.
\item \cite[Corollary 2.18]{CT} The endofunctor $Hom_A(T,-)\otimes_{End_A(T)}T$ of $Mod(A)$ acts as the identity functor on $Gen(T)$.
\end{enumerate}

We  now show that $Im(p_*)$ is extension-closed in $Mod(End_A(T))$.
Let $X$ and $Z$ lie in $Mod(B)$ and consider a short exact sequence in $Mod(End_A(T))$ of the form
$$\epsilon: \ \ \ \ \ 0\rightarrow p_*(X)\rightarrow Y\rightarrow p_*(Z)\rightarrow 0.$$ 
Since $p_*\cong Hom_A(T,f_*(-))$, it follows from (1) above that we get a short exact sequence in $Mod(A)$
$$\epsilon\otimes_{End_A(T)}T:\ \ \ \ \ 0\rightarrow Hom_A(T,f_*(X))\otimes_{End_A(T)}T\rightarrow Y\otimes_{End_A(T)}T\rightarrow Hom_A(T,f_*(Z))\otimes_{End_A(T)}T\rightarrow 0.$$
By (2) above, the two outer terms of this sequence are isomorphic to $f_*(X)$ and $f_*(Y)$. Since $Im(f_*)$ is extension-closed in $Mod(A)$, also $Y\otimes_{End_A(T)}T$ is in $Im(f_*)$. 
By Proposition \ref{tilt silt}(2), the functor $Hom_A(T,-)$ is exact for sequences in $Gen(T)$, so $Hom_A(T,\epsilon\otimes_{End_A(T)}T)$ is a short exact sequence in $Mod(End_A(T))$. Now, the unit of the adjunction $(-\otimes_{End_A(T)}T,Hom_A(T,-))$ yields a map of short exact sequences  $$\epsilon\rightarrow  Hom_A(T,\epsilon\otimes_{End_A(T)}T)$$ which, again by fact (2), induces an isomorphism $$Y\cong Hom_A(T,Y\otimes_{End_A(T)}T).$$ Hence, $Y$ lies in $Im(Hom_A(T,f_*(-)))=Im(p_*).$
\end{proof}

\begin{remark}\label{generate small}
If every element of the ideal $I$ is an endomorphism of $T$ factoring through $add(T_1)$, then $I$ is generated by the idempotent element of $End_A(T)$ corresponding to the summand $T_1$ of $T$. This holds, for example, if $T$ is a finitely generated silting module over a finite dimensional algebra. This case has been explored in \cite{J}. 
\end{remark}

We finish this section with a reduction result for torsion classes that, in the context of partial tilting modules, was first proved in \cite[Theorem 4.4]{CTT}. A similar result was shown for $\tau$-rigid modules and their completion to support $\tau$-tilting modules over finite dimensional algebras in \cite[Theorems 3.12 and 3.13]{J}.
Given two full subcategories $\Xcal$ and $\Ycal$ of $Mod(A)$, we denote by $\Xcal\star\Ycal$ the full subcategory containing the $A$-modules $M$ such that there are $X$ in $\Xcal$ and $Y$ in $\Ycal$ and a short exact sequence of the form
$$\xymatrix{0\ar[r] & X\ar[r] & M\ar[r] & Y\ar[r] & 0.}$$
Recall that an $A$-module $M$ is called \textbf{finendo} if it is a finitely generated module over its endomorphism ring. Equivalently, by \cite[Proposition 1.2]{ATT}, every $A$-module has a left $Gen(M)$-approximation. A class $\Tcal$ of $A$-modules such that every module has a left $\Tcal$-approximation is called \textbf{preenveloping}.

\begin{theorem}
Let $T_1$ be a partial silting $A$-module with associated ring epimorphism $f:A\ra B$ and let $T$ be the completion of $T_1$ to a silting $A$-module as above. Then the following holds.
\begin{enumerate}
\item There is a bijection between
$$\left\{\begin{array}{c}\text{torsion classes\,\,} \Tcal\,in\,\, Mod(A) \\ with\,\, Gen(T_1)\subseteq\Tcal\subseteq Gen(T)\end{array}\right\}\longleftrightarrow\left\{\begin{array}{c}\text{torsion classes\,\,}\\\ in\,\, Mod(B) \end{array}\right\}$$ 
where a torsion class $\Tcal$ in $Mod(A)$ is mapped to $\Tcal\otimes_AB$. Conversely, a torsion class $\Gcal$ in $Mod(B)$ is mapped to $Gen(T_1)\star f_*(\Gcal)$.\\
\item If the $A$-module $T_1$ is finendo, then the bijection in (1) restricts to a bijection between
$$\left\{\begin{array}{c}\text{preenveloping torsion classes\,\,} \Tcal\\ in\,\, Mod(A)\,\, with\,\, Gen(T_1)\subseteq\Tcal\subseteq Gen(T)\end{array}\right\}\longleftrightarrow\left\{\begin{array}{c}\text{preenveloping torsion}\\ classes\,\, in\,\, Mod(B) \end{array}\right\}.$$ 
\end{enumerate}
\end{theorem}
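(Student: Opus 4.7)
The plan is to reformulate the $\otimes_A B$ description as an intersection, verify both constructions are torsion classes and mutually inverse, then treat the preenveloping refinement via closure under direct products.

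\emph{Part (1).} The key observation is that for $M \in Gen(T) = \Dcal$ the unit $M \to M \otimes_A B$ coincides with the $\Ycal$-reflection $M \to M/\tau_{T_1}(M)$: a $\Dcal$-approximation of $M \in \Dcal$ can be taken to be the identity, so the construction of $\psi_M$ preceding Proposition \ref{reflections} collapses to the torsion-radical quotient $q_M$, and $\psi_M$ is the unit $M \to M \otimes_A B$ by the discussion following Theorem \ref{bireflective}. It follows that $\Tcal \otimes_A B = \{M/\tau_{T_1}(M) : M \in \Tcal\} = \Tcal \cap \Ycal$, using closure of $\Tcal$ under quotients, $M/\tau_{T_1}(M) \in \Ycal$, and $\tau_{T_1}(N) = 0$ for $N \in \Ycal$. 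Bireflectivity of $\Ycal$ then makes $\Tcal \cap \Ycal$ a torsion class in $Mod(B)$. Conversely, $\Tcal' := Gen(T_1) \star f_*(\Gcal)$ is sandwiched between $Gen(T_1)$ and $Gen(T)$ and is easily closed under coproducts and quotients (the latter using right-exactness of $-\otimes_A B$ and quotient-closure of $\Gcal$). Extension-closure is the delicate step: given $0 \to M_1 \to M \to M_2 \to 0$ with $M_i \in \Tcal'$, tensoring with $B$ yields $Y_1 \to M \otimes_A B \to Y_2 \to 0$ where $Y_i = M_i \otimes_A B \cong M_i/\tau_{T_1}(M_i) \in \Gcal$; letting $Z$ denote the image of $Y_1$ in $M \otimes_A B$ (a $B$-quotient of $Y_1 \in \Gcal$, hence in $\Gcal$) exhibits $M \otimes_A B$ as an extension of $Y_2$ by $Z$ in $\Gcal$, so $M \otimes_A B \in \Gcal$; the canonical sequence $0 \to \tau_{T_1}(M) \to M \to M \otimes_A B \to 0$ then places $M$ in $\Tcal'$. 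The two assignments are mutually inverse: from $\Tcal$, the canonical $\tau_{T_1}$-decomposition recovers $\Tcal = Gen(T_1) \star f_*(\Tcal \cap \Ycal)$; from $\Gcal$, any $N \in (Gen(T_1) \star f_*(\Gcal)) \cap \Ycal$ has trivial $T_1$-torsion and thus lies in $f_*(\Gcal)$.

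\emph{Part (2).} I would invoke the standard characterization that a torsion class in $Mod(A)$ is preenveloping if and only if it is closed under direct products: the forward implication follows because the projections of $\prod T_i$ through a preenvelope assemble into a retraction, exhibiting $\prod T_i$ as a summand of a $\Tcal$-object; the converse rests on the finendo-generator description from \cite[Proposition~1.2]{ATT} recalled before the statement. One direction of the restricted bijection is then immediate: $\Ycal$ is closed under products (bireflective), so $\Tcal \cap \Ycal$ inherits product-closure in $Mod(B)$. For the other direction, $T_1$ being finendo forces $Gen(T_1)$ to be closed under products; for $M_i \in \Tcal' = Gen(T_1) \star f_*(\Gcal)$ with sequences $0 \to X_i \to M_i \to Y_i \to 0$, exactness of products in $Mod(A)$ together with the fact that the right adjoint $f_*$ preserves products yields $0 \to \prod X_i \to \prod M_i \to \prod Y_i \to 0$ with $\prod X_i \in Gen(T_1)$ and $\prod Y_i \in f_*(\Gcal)$, so $\prod M_i \in \Tcal'$.

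The main obstacle is the extension-closure step in part (1): since $-\otimes_A B$ is only right exact, one cannot directly deduce short-exactness of the tensored sequence and must work instead with the image $Z$ as a $B$-quotient of $Y_1$ in order to invoke extension-closure of $\Gcal$; the rest of the argument then reduces to unwinding the various torsion-class axioms together with bireflectivity of $\Ycal$.
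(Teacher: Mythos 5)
Your part (1) is essentially correct, and it amounts to writing out the arguments that the paper delegates to \cite[Theorem 3.12]{J}: the identification of $\Tcal\otimes_AB$ with $\Tcal\cap T_1^\circ$ inside $Mod(A)$ (valid since $\Tcal\subseteq Gen(T)$, so the unit $M\to M\otimes_AB$ is the quotient by $\tau_{T_1}$), the $\tau_{T_1}$-decomposition giving mutual inverseness, and the image argument for extension-closure of $Gen(T_1)\star f_*(\Gcal)$ are exactly the ingredients of that proof.

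Part (2), however, contains a genuine gap. Your whole argument rests on the equivalence ``a torsion class is preenveloping if and only if it is closed under direct products''. The implication preenveloping $\Rightarrow$ product-closed is fine (your retraction argument works, since torsion classes are closed under direct summands), but the converse is exactly what you need in both directions of the restricted bijection: you pass from product-closure of $\Tcal\otimes_AB$ in $Mod(B)$, respectively of $Gen(T_1)\star f_*(\Gcal)$ in $Mod(A)$, back to the existence of left approximations. This converse is not established by your appeal to \cite[Proposition 1.2]{ATT}: that result concerns classes of the form $Gen(M)$ for a single module $M$ (such a class is preenveloping iff $M$ is finendo), whereas the torsion classes occurring here --- arbitrary torsion classes in $Mod(B)$, and torsion classes sandwiched between $Gen(T_1)$ and $Gen(T)$ --- are not presented as $Gen(M)$, and you give no construction of a (finendo) generator; ``product-closed torsion class $\Rightarrow$ preenveloping'' is a nontrivial assertion that would itself require proof. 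The paper avoids this issue by working with approximations directly: for one direction, composing a left $\Tcal$-approximation with the quotient modulo the trace $\tau_{T_1}$ (a left $T_1^\circ$-approximation) yields a left $\Tcal\cap T_1^\circ$-approximation, and since $A$-homomorphisms between $B$-modules are $B$-homomorphisms this produces preenvelopes in $Mod(B)$; for the converse, $f_*(\Gcal)$ is preenveloping in $Mod(A)$ (compose the $\Xcal_B$-reflection with a left $\Gcal$-approximation in $Mod(B)$), $Gen(T_1)$ is preenveloping because $T_1$ is finendo, and the Gentle--Todorov theorem \cite[Theorem 1.1]{GT} then shows that the extension class $Gen(T_1)\star f_*(\Gcal)$ is preenveloping. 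To repair your proof, either establish the missing implication for the torsion classes at hand or replace the product-closure strategy by these approximation arguments.
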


\begin{proof}
Let $\Tcal$ be a torsion class in $Mod(A)$ fulfilling $Gen(T_1)\subseteq\Tcal\subseteq Gen(T)$. Since $\Xcal_B=Gen(T)\cap T_1^{\circ}$, we have $\Tcal\otimes_AB=(\Tcal\cap T_1^\circ)\otimes_AB$, which can be identified with $\Tcal\cap T_1^\circ$ in $Mod(A)$. Now the first statement follows by the same arguments used in the proof of \cite[Theorem 3.12]{J}. 

For statement (2), take a preenveloping torsion class $\Tcal\subseteq Mod(A)$ with $Gen(T_1)\subseteq\Tcal\subseteq Gen(T)$. Since  $T_1^\circ$ is a torsion-free class and $\Tcal$ is a preenveloping torsion class in $Mod(A)$, every $A$-module admits a left $\Tcal\cap T_1^\circ$-approximation, as argued before Proposition \ref{prop bireflective}. In particular, the torsion class $\Tcal\otimes_AB$ in $Mod(B)$ is preenveloping. Conversely, let $\Gcal$ be a preenveloping torsion class in $Mod(B)$. We have to show that every $A$-module has a left $Gen(T_1)\star f_*(\Gcal)$-approximation. Since $T_1$ is finendo, using \cite[Theorem 1.1]{GT}, it suffices to check that every $A$-module admits a left $f_*(\Gcal)$-approximation. By assumption, every $B$-module has a left $\Gcal$-approximation and, since $\Xcal_B$ is a bireflective subcategory of $Mod(A)$, the result follows.
\end{proof}

\section{Examples}
In this section, we focus on finite dimensional $\mathbb{K}$-algebras. We investigate homological properties of the ring epimorphism associated with a partial silting module.

\begin{example}\label{ex not hom 1}
Let $\Lambda$ be the quotient of the path algebra over $\mathbb{K}$ for the quiver
$$\xymatrix{1\ar@<1ex>[r]^\alpha & 2\ar@<1ex>[l]^\beta}$$
by the ideal generated by $\alpha\beta\alpha$ and $\beta\alpha\beta$. The simple $\Lambda$-module $S_1$ is partial silting with respect to its minimal projective presentation $\sigma:P_2\rightarrow P_1$. In fact, we have that $\Dcal_\sigma=Add(P_1\oplus P_1/rad^2P_1\oplus S_1)$  and the corresponding completion of $S_1$ to a silting module (in the sense of the previous section) is given by $T:=S_1\oplus P_1^{\oplus 2}$. Moreover, the associated bireflective subcategory of $Mod(\Lambda)$ is given by $Add(P_1/rad^2(P_1))$ yielding the ring epimorphism $\Lambda\ra B$ with $B\cong End_A(T)/\langle e_{S_1}\rangle\cong M_2(\mathbb{K})$ (see Theorem \ref{thm2} and Remark \ref{generate small}).  Since the $\Lambda$-module $P_1/rad^2(P_1)$ has infinite projective dimension and it is periodic with respect to the syzygy, there is some $d>1$ with $Ext^d_\Lambda(B,B)\not= 0$. Thus, the ring epimorphism $\Lambda\ra B$ is not homological.
\end{example}

The example above illustrates the fact that ring epimorphisms associated with partial silting modules (even if simple ones) are in general not homological. In the tilting case, however, one can provide sufficient conditions for this property to hold. The following result is motivated by \cite[Theorem 4.16]{GeLe}. For simplicity we use the convention $M^{\oplus 0}:=0$ for any $\Lambda$-module $M$.

\begin{proposition}\label{prop brick}
Let $\Lambda$ be a finite dimensional $\Kbb$-algebra and $T_1$ be a finite dimensional, non-projective, partial tilting $\Lambda$-module with $End_\Lambda(T_1)\cong\Kbb$. Then the associated ring epimorphism $f:\Lambda\ra B$ has kernel $\tau_{T_1}(\Lambda)$, and it is homological if and only if $Ker(f)\cong T_1^{\oplus n}$ for some $n\geq 0$. Moreover, if $f$ is homological, then $B$ has projective dimension at most 2 as a right $\Lambda$-module and, in fact, its projective dimension is less or equal than 1 if and only if $f$ is injective.
\end{proposition}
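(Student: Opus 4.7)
The plan is to exploit the factorisation $f=\psi_\Lambda=q\circ\phi_\Lambda$ from Proposition \ref{reflections}, where $\phi_\Lambda\colon\Lambda\ra M_\Lambda$ is the left $\Dcal$-approximation arising from the Bongartz completion $T=T_1\oplus M_\Lambda$. Since $T_1$ is partial tilting, its projective dimension is at most $1$ and $T$ is a tilting module, so $\phi_\Lambda$ is injective and fits into a short exact sequence
\[
0\ra \Lambda\xrightarrow{\phi_\Lambda}M_\Lambda\ra T_1^{(I)}\ra 0.
\]
I would choose $I$ to be a $\Kbb$-basis of $\Ext^1_\Lambda(T_1,\Lambda)$, which is legitimate because $\End_\Lambda(T_1)\cong\Kbb$ ensures this space is a finite-dimensional $\Kbb$-vector space.

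For the kernel description, apply $\Hom(T_1,-)$ to the Bongartz sequence. Since $\Ext^1(T_1,M_\Lambda)=0$ (Bongartz property), $\Ext^1(T_1,T_1^{(I)})=0$ (partial tilting), and the connecting map $\Hom(T_1,T_1^{(I)})\cong\Kbb^I\ra\Ext^1(T_1,\Lambda)\cong\Kbb^I$ is an isomorphism by the choice of basis, composition with $\phi_\Lambda$ induces an isomorphism $\Hom(T_1,\Lambda)\cong\Hom(T_1,M_\Lambda)$. So every map $T_1\ra M_\Lambda$ factors through $\phi_\Lambda$, giving $\tau_{T_1}(M_\Lambda)=\phi_\Lambda(\tau_{T_1}(\Lambda))$, and injectivity of $\phi_\Lambda$ yields $\Ker(f)=\tau_{T_1}(\Lambda)$.

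For the homological criterion, set $K:=\tau_{T_1}(\Lambda)$ and work from the induced exact sequence $0\ra K\ra M_\Lambda\ra B\ra 0$. First I would establish $Tor_i^\Lambda(T_1,B)=0$ for all $i\geq 1$: the case $i=1$ follows from Remark \ref{rem hom tensor} (since $\sigma\otimes_\Lambda B$ is an isomorphism), while higher $i$ follow from the projective dimension bound on $T_1$. Propagating this through the Bongartz sequence gives $Tor_i^\Lambda(M_\Lambda,B)=0$ for $i\geq 1$, and then the long exact sequence of $-\otimes_\Lambda B$ yields $Tor_n^\Lambda(B,B)\cong Tor_{n-1}^\Lambda(K,B)$ for $n\geq 2$. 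Since $Tor_1^\Lambda(B,B)=0$ holds automatically (by Proposition \ref{Tor Ext}), $f$ is homological if and only if $Tor_i^\Lambda(K,B)=0$ for all $i\geq 1$. To convert this into a statement about $K$, I would consider the universal surjection $\mu\colon T_1^{(J)}\ra K$ with $J=\dim_\Kbb\Hom(T_1,\Lambda)$ and kernel $L$. The delicate step will be proving $L\in\Ycal$: applying $\Hom(T_1,-)$ shows that $\mu_*\colon\Hom(T_1,T_1^{(J)})\ra\Hom(T_1,K)\cong\Hom(T_1,\Lambda)$ is an isomorphism $\Kbb^J\ra\Kbb^J$, whence $L\in T_1^\circ\cap T_1^{\perp_1}$; then applying $\Hom(-,L)$ to the Bongartz sequence and invoking $\Ext^1(T_1,L)=0$ gives $\Ext^1(M_\Lambda,L)=0$, placing $L$ in $T^{\perp_1}=\Dcal$. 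Because $L\in\Ycal$, its $\Ycal$-reflection is $L$ itself, so the long exact sequence of $-\otimes_\Lambda B$ applied to $0\ra L\ra T_1^{(J)}\ra K\ra 0$ collapses to $Tor_1^\Lambda(K,B)\cong L\otimes_\Lambda B\cong L$. Thus $f$ is homological precisely when $L=0$, that is, when $K\cong T_1^{\oplus J}$; the reverse implication is then immediate from $Tor_i^\Lambda(T_1^{\oplus n},B)=0$.

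Finally, for the projective dimension, assume $f$ is homological so that $K\cong T_1^{\oplus n}$. Both $K$ and $M_\Lambda$ have projective dimension at most $1$, hence $0\ra K\ra M_\Lambda\ra B\ra 0$ forces $B$ to have projective dimension at most $2$. If $f$ is injective then $K=0$ and $B\cong M_\Lambda$, giving projective dimension at most $1$. For the converse, applying $\Hom(-,K)$ to the sequence and using $\Ext^1(K,K)=0$ (partial tilting), $\Ext^1(M_\Lambda,K)=0$ (Proposition \ref{tilt silt}(2)), and $\Ext^2(B,K)=0$ makes $\Hom(M_\Lambda,K)\ra\End(K)$ surjective. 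Lifting $\mathrm{id}_K$ produces a retraction splitting the sequence, and composing with $\phi_\Lambda$ yields a retraction $\Lambda\ra K$ of the inclusion $K\hookrightarrow\Lambda$. This makes $K\cong T_1^{\oplus n}$ a direct summand of $\Lambda$; if $n>0$ this would exhibit $T_1$ as projective, contradicting the hypothesis, so $K=0$ and $f$ is injective.
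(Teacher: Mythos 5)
Your identification of the kernel and your criterion for $f$ being homological are correct, and they run essentially parallel to the paper's proof: where the paper uses the minimality of the Bongartz approximation together with $End_\Lambda(T_1)\cong\Kbb$ to see that $\Hom_\Lambda(T_1,\phi)$ is bijective, you use the universal extension along a $\Kbb$-basis of $\Ext^1_\Lambda(T_1,\Lambda)$ and the resulting connecting isomorphism; likewise your module $L=\Ker(\mu)$ is the paper's $\Ker(\epsilon)$, and the identification $Tor_1^\Lambda(K,B)\cong L\otimes_\Lambda B\cong L$ is a clean way to run both implications. (A small slip: finite-dimensionality of $\Ext^1_\Lambda(T_1,\Lambda)$ has nothing to do with $End_\Lambda(T_1)\cong\Kbb$; what the brick hypothesis is really needed for is the injectivity of the connecting map $\Hom_\Lambda(T_1,T_1^{(I)})\to\Ext^1_\Lambda(T_1,\Lambda)$ and of $\mu_*$.) The bound $\mathrm{pd}(B)\le 2$ and the implication "injective $\Rightarrow$ $\mathrm{pd}(B)\le 1$" are also fine.

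The gap is in the converse "$\mathrm{pd}(B)\le 1\Rightarrow f$ injective". Applying $\Hom_\Lambda(-,K)$ to $0\to K\to M_\Lambda\to B\to 0$ gives $\Hom_\Lambda(M_\Lambda,K)\to\End_\Lambda(K)\xrightarrow{\partial}\Ext^1_\Lambda(B,K)\to\Ext^1_\Lambda(M_\Lambda,K)$, and lifting $\mathrm{id}_K$ requires $\partial(\mathrm{id}_K)=0$, i.e.\ that the class of this extension in $\Ext^1_\Lambda(B,K)$ vanishes. The facts you invoke ($\Ext^1_\Lambda(K,K)=0$, $\Ext^1_\Lambda(M_\Lambda,K)=0$, $\Ext^2_\Lambda(B,K)=0$) only show that $\partial$ is \emph{surjective}; none of them, nor $\mathrm{pd}(B)\le 1$, gives any control of $\Ext^1_\Lambda(B,K)\cong\Ext^1_\Lambda(B,T_1)^{\oplus n}$, so the claimed surjectivity of $\Hom_\Lambda(M_\Lambda,K)\to\End_\Lambda(K)$ is unjustified. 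The step can be repaired without splitting the sequence, by proving directly that $K$ is projective (which is how the paper phrases its conclusion): for every $\Lambda$-module $X$, $\mathrm{pd}(B)\le 1$ makes the restriction $\Ext^1_\Lambda(M_\Lambda,X)\to\Ext^1_\Lambda(K,X)$ surjective, and the Bongartz sequence makes $\Ext^1_\Lambda(T_1^{(I)},X)\to\Ext^1_\Lambda(M_\Lambda,X)$ surjective since $\Ext^1_\Lambda(\Lambda,X)=0$; but the composite of these two maps is restriction along $K\to M_\Lambda\to T_1^{(I)}$, which is zero because $K$ maps into $\phi_\Lambda(\Lambda)$. Hence $\Ext^1_\Lambda(K,X)=0$ for all $X$, so $K\cong T_1^{\oplus n}$ is projective, forcing $n=0$ by the non-projectivity of $T_1$; your concluding contradiction argument is then superfluous (or can be kept, now that $K$ is known to be projective a fortiori zero).
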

\begin{proof}
Choose $\Lambda$ and $T_1$ as above and let $\sigma$ be a monomorphic projective presentation of $T_1$. Let $f:\Lambda\ra B$ be the ring epimorphism associated with the bireflective subcategory $\Dcal_\sigma\cap T_1^\circ=T_1^\perp$.
Since $T_1$ is finite dimensional and indecomposable, we can choose a Bongartz complement $T_0$ of $T_1$ such that there is a short exact sequence 
$$\mu_1:\ \ \ \  \xymatrix{0\ar[r]& \Lambda\ar[r]^\phi&T_0\ar[r]^\psi& T_1^{\oplus k}\ar[r]&0,}$$
for some $k\geq 1$, where the map $\phi$ is a minimal left $T_1^{\perp_1}$-approximation of $\Lambda$.  From the construction of the $\Xcal_B$-reflections in $Mod(\Lambda)$ (see Proposition \ref{reflections}), it follows that $Ker(f)\cong \tau_{T_1}(T_0)\cap \phi(\Lambda)$. We will show that $\phi$ induces an isomorphism $\tau_{T_1}(\Lambda)\cong \tau_{T_1}(T_0)$.  For that, it is enough to show that the monomorphism $Hom_\Lambda(T_1,\phi):Hom_\Lambda(T_1,\Lambda)\ra Hom_\Lambda(T_1,T_0)$ is an isomorphism. Given $\eta:T_1\rightarrow T_0$, if the composition $\psi\circ\eta:T_1\rightarrow T_1^{\oplus k}$ is non-zero, then it is a split monomorphism since $End_{\Lambda}(T_1)\cong \Kbb$. Therefore, $\eta$ is a split monomorphism and $\psi_{|\eta(T_1)}$ is an isomorphism between a direct summand of $T_0$ 
 and a direct summand of $T_1^{\oplus k}$, contradicting the minimality of $\phi$. This shows that $\psi\circ\eta=0$ and $\eta$ factors through $\phi$. We conclude that $Ker(f)=\tau_{T_1}(\Lambda)\cong \tau_{T_1}(T_0)$.

Suppose now that $Ker(f)=T_1^{\oplus n}$, for some $n\geq 0$. Since we know that $Tor_1^\Lambda(B,B)=0$, by applying the functor $-\otimes_\Lambda B$ to the sequence
$$\xymatrix{0\ar[r]& T_1^{\oplus n}\ar[r]&T_0\ar[r]& B\ar[r]&0,}$$
we see that $Tor_i^\Lambda(B,B)=0$, for all $i>2$ (since both $T_0$ and $T_1^{\oplus n}$ have projective dimension at most 1). Moreover, since $\sigma\otimes_\Lambda B$ is an isomorphism by Remark \ref{rem hom tensor}, we have that $Tor_1(T_1^{\oplus n},B)=0$ and, thus, $Tor_2^\Lambda(B,B)=0$, proving that $f$ is homological.

Conversely, suppose that $f$ is a homological ring epimorphism and let $n$ be the dimension of the $\Kbb$-vector space $Hom_\Lambda(T_1,\Lambda)$. Choosing a basis of $Hom_\Lambda(T_1,\Lambda)$, let $\epsilon:T_1^{\oplus n}\rightarrow \Lambda$ denote the induced universal map and consider the short exact sequence induced by it
$$\mu_2:\ \ \ \ \xymatrix{0\ar[r]&Ker(\epsilon)\ar[r]&T_1^{\oplus n}\ar[r]^{\epsilon\ \ }&\tau_{T_1}(\Lambda)\ar[r]&0.}$$
It follows, by construction, that $Ker(\epsilon)$ lies in $T_1^{\perp}=\Xcal_B$. We now observe that also $Ker(\epsilon)\otimes_\Lambda B=0$. Applying $-\otimes_\Lambda B$ to the sequence $\mu_1$ we see that $Tor_1^\Lambda(T_0,B)=0$. Applying the same functor to the sequence 
$$\xymatrix{0\ar[r]& \tau_{T_1}(T_0)\ar[r]&T_0\ar[r]&B\ar[r]&0}$$
we conclude that $Tor_1(\tau_{T_1}(T_0),B)=0$, because $f$ is homological. Since $\tau_{T_1}(T_0)\cong\tau_{T_1}(\Lambda)$, by applying once again $-\otimes_\Lambda B$ to the sequence $\mu_2$ we see that $Ker(\epsilon)\otimes_\Lambda B=0$. Since $Ker(\epsilon)$ lies in $\Xcal_B$, this means that $Ker(\epsilon)=0$ and $T_1^{\oplus n}\cong \tau_{T_1}(\Lambda)=Ker(f)$. 

Finally, if $f$ is homological, the previous assertions show that, as a right $\Lambda$-module, it is isomorphic to the quotient $T_0/T_1^{\oplus n}$, from which it follows 
that the projective dimension of $B_\Lambda$ is less or equal than 2. If $n=0$ (i.e., $f$ is injective), then the projective dimension of $B$ is less or equal than 1. Conversely, if the projective dimension of $B$ is less or equal than 1, then $T_1^{\oplus n}$ is either zero or projective. Since $T_1$ is by assumption not projective, it follows that $n=0$ and $B\cong T_0$ as a right $\Lambda$-module, thus finishing the proof.
\end{proof}

Note that the proposition shows in particular that if the above ring epimorphism $f:\Lambda\ra B$ is injective, then it is homological. Examples of injective homological ring epimorphisms occur very naturally in the context of hereditary rings, as we will see in the next section. We finish this section with two examples where $f$ arises from a partial tilting module with trivial endomorphism ring and such that in one case $f$ is homological but not injective and in the other case  $f$ is not homological.

\begin{example}\label{ex not hom 2}
Let $\Lambda$ be the quotient of the path algebra over $\mathbb{K}$ for the quiver
$$\xymatrix{&&1\\5\ar[r]^\epsilon&3\ar[ur]^\gamma&&2\ar[ul]_\alpha\\&&4\ar[ru]_\beta
\ar[ul]^\delta}$$
by the ideal generated by $\alpha\beta-\gamma\delta$ and $\gamma\epsilon$. The Auslander-Reiten quiver of $\Lambda$ is given by
$$\xymatrix{& P_2\ar[dr] & & I_5\ar[dr] & & & & \\ P_4\ar[ur]\ar[dr] & & M_1\ar[ur]\ar[dr] & & S_3\ar[dr] & & I_2\ar[dr] & \\ & P_3\ar[dr]\ar[ur] & & M_3\ar[ur]\ar[r]\ar[dr] & P_1\ar[r] & M_4\ar[dr]\ar[ur] & & I_1 \\ P_5\ar[ur] & & M_2\ar[ur] & & S_2\ar[ur] & & I_3\ar[ur] &}$$
Given a finite dimensional partial tilting module $\Lambda$-module $T_1$ we denote, as before, by $f:\Lambda\ra B$ the associated ring epimorphism.

\textbf{Case 1: The epimorphism $f$ is homological but not injective.} Let $T_1:=M_2$. The associated bireflective subcategory is described by $T_1^{\perp}=Add(P_2\oplus P_3\oplus P_4\oplus I_2 \oplus M_1\oplus I_5\oplus S_2\oplus I_1)$. It is clear that $\tau_{T_1}(\Lambda)=\tau_{M_2}(P_1)\cong M_2$ and, thus, by Proposition \ref{prop brick}, we conclude that $f$ is homological and non-injective. In particular, the projective dimension of $B$ as an $\Lambda$-module is exactly 2. This can also be computed directly from the description of $T_1^\perp$ by observing that $I_2$ is a projective $B$-module which has projective dimension 2 as a $\Lambda$-module. In fact, since $P_2\oplus P_3\oplus P_4\oplus I_2$ is a projective generator in $\Xcal_B$, it follows that $B$ is Morita equivalent to the quotient of the path algebra of the quiver 
$$\xymatrix{\bullet&\bullet\ar[l]_\mu\ar[r]^\nu&\bullet\ar[r]^\omega&\bullet}$$
by the ideal generated by the path $\omega\nu$.

\textbf{Case 2: The epimorphism $f$ is not homological.} Consider the partial tilting
$\Lambda$-module $M_1$. The associated bireflective subcategory is described by $M_1^\perp=Add(P_2\oplus P_3\oplus P_5\oplus M_2\oplus I_1)$. It is clear, however, that $\tau_{T_1}(\Lambda)=\tau_{T_1}(P_1)\cong M_3$. Therefore, by Proposition \ref{prop brick}, $f$ is not homological. This can also be seen by observing that $B$ is Morita equivalent to $\Kbb Q\times \Kbb\times \Kbb$, where $Q$ is the quiver
$\xymatrix{\bullet\ar[r]&\bullet,}$
and, thus, it is hereditary. However, one checks that $Ext_\Lambda^2(I_1,P_5)\not= 0$, showing that $f$ cannot be homological.
\end{example}

\section{Minimal silting modules over hereditary rings}
In this section, we study silting modules over hereditary rings that turn out to play the role of generalised support tilting modules. Afterwards, we define minimal silting modules. This definition allows us to associate a unique ring epimorphism to every such silting module. We then use this assignment to establish a bijection between minimal silting modules and homological ring epimorphisms. We have the following useful lemma.

\begin{lemma}\label{silting is support tilting}
Let $A$ be a hereditary ring and $\Tcal$ be a subcategory of $Mod(A)$ such that $\Tcal=Add(\Tcal)$. 
If $\phi:A\rightarrow T_0$ is a left $\Tcal$-approximation with $Ext_A^1(T_0,T_0)=0$, then $Ker(\phi)=Ann(\Tcal)$ is a two-sided idempotent ideal.
\end{lemma}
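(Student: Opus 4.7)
The plan is to verify the three assertions in order: $\Ker(\phi)=Ann(\Tcal)$, that this ideal is two-sided, and that it is idempotent. Write $I:=\Ker(\phi)$ throughout. First, since $\phi(1)\in T_0$ and $T_0\cdot Ann(\Tcal)=0$, every $a\in Ann(\Tcal)$ satisfies $\phi(a)=\phi(1)\cdot a=0$, giving $Ann(\Tcal)\subseteq I$. Conversely, for $a\in I$, $T\in\Tcal$ and $t\in T$, the right $A$-module map $A\ra T$, $b\mapsto tb$ factors through $\phi$ by the approximation property, hence vanishes on $I$; so $ta=0$ and $a\in Ann(\Tcal)$. Two-sidedness of $Ann(\Tcal)$ is automatic: for $b\in A$, $a\in Ann(\Tcal)$ and $t\in T\in\Tcal$, one has $t(ba)=(tb)a=0$ since $tb\in T$ and $T\cdot Ann(\Tcal)=0$.

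The crux, and the main obstacle I anticipate, is idempotency. My strategy is first to establish $Hom_A(I,T_0)=0$, and then to exploit projectivity of $I$. Since $\phi$ factors as $A\twoheadrightarrow A/I\hookrightarrow T_0$ with cokernel $C$, applying $Hom_A(-,T_0)$ to the short exact sequence $0\ra A/I\ra T_0\ra C\ra 0$ together with the hypothesis $Ext^1_A(T_0,T_0)=0$ and hereditarity (so that $Ext^2_A(C,T_0)=0$) forces $Ext^1_A(A/I,T_0)=0$. Next, apply $Hom_A(-,T_0)$ to the sequence $0\ra I\ra A\ra A/I\ra 0$: since $T_0\cdot I=0$, the natural map $Hom_A(A/I,T_0)\ra Hom_A(A,T_0)$ is the identity on $T_0$ under the usual identifications, so the restriction $Hom_A(A,T_0)\ra Hom_A(I,T_0)$ is zero and the long exact sequence yields $Hom_A(I,T_0)\cong Ext^1_A(A/I,T_0)=0$.

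To finish, I would invoke the projectivity of $I$. Since $A$ is hereditary, the right ideal $I$ is a projective $A$-module, so the dual basis lemma supplies families $\{g_\alpha\}_\alpha\subseteq I$ and $A$-linear maps $\{f_\alpha\colon I\ra A\}_\alpha$ with $x=\sum_\alpha g_\alpha f_\alpha(x)$ for every $x\in I$ (only finitely many summands nonzero). For each $\alpha$ the composition $\phi\circ f_\alpha$ belongs to $Hom_A(I,T_0)=0$, so $f_\alpha(I)\subseteq \Ker(\phi)=I$. Then every summand $g_\alpha f_\alpha(x)$ lies in $I\cdot I=I^2$, giving $x\in I^2$ and hence $I\subseteq I^2$. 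Since the reverse inclusion is trivial, $I=I^2$, as required.
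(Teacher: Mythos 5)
Your proof is correct, and the decisive final step is genuinely different from the paper's. The first two stages agree in substance: you identify $\Ker(\phi)=Ann(\Tcal)$ directly, and you show $Hom_A(I,T_0)=0$ from $Ext^1_A(T_0,T_0)=0$ together with hereditarity (the paper phrases the latter as ${}^{\perp_1}T_0$ being closed under submodules rather than as $Ext^2_A(C,T_0)=0$, but these are two wordings of the same fact). Where you diverge is in deducing $I=I^2$. The paper first upgrades $Hom_A(I,T_0)=0$ to $Hom_A(I,\Tcal)=0$ using projectivity of $I$ and $\Tcal\subseteq Gen(T_0)$, and then runs a tensor argument: it tensors the monomorphism $A/I\hookrightarrow T_0$ with the flat module $I$ to embed $I/I^2\cong A/I\otimes_A I$ into $T_0\otimes_A I$, observes that $T_0\otimes_A I\in Add(T_0)\subseteq\Tcal$, and kills it by tensoring a free cover $A^{(K)}\twoheadrightarrow T_0$ to produce a surjection $I^{(K)}\twoheadrightarrow T_0\otimes_A I$ which must vanish because $Hom_A(I,\Tcal)=0$. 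Your route replaces all of this by a single application of the dual basis lemma to the projective right ideal $I$: once $Hom_A(I,T_0)=0$, each coordinate functional $f_\alpha:I\to A$ composes to zero with $\phi$, so lands in $I$, and the dual basis expansion $x=\sum_\alpha g_\alpha f_\alpha(x)$ immediately places $x$ in $I^2$. This is shorter, avoids the intermediate step $Hom_A(I,\Tcal)=0$ entirely, and never needs flatness of $I$ or the closure property $\Tcal=Add(\Tcal)$ in the idempotency argument (though the latter was already used to see $T_0\in\Tcal$). The trade-off is that the paper's tensor computation makes visible the identity $I/I^2\cong A/I\otimes_A I$, which is reused implicitly elsewhere in their framework, whereas your dual-basis argument is a self-contained piece of general ring theory about projective ideals $I$ with $Hom_A(I,A/I)=0$.
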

\begin{proof}
It is easy to see that $J:=Ker(\phi)$ equals $Ann(\Tcal)$ 
and, thus, it is a two-sided ideal (and projective both as left and as right $A$-module). Since $A$ is hereditary, ${}^{\perp_1}T_0$ is closed for subobjects. It follows that $Ext^1_A(A/J,T_0)=0$. Applying the functor $Hom_A(-,T_0)$ to the short exact sequence induced by the inclusion of $J$ in $A$ and using the fact that any map from $A$ to $T_0$ factors through $\phi$ (and thus through the quotient $A/J$), we conclude that $Hom_A(J,T_0)=0$. Since $J$ is a projective $A$-module and $\phi$ is a $\Tcal$-approximation with $\Tcal\subseteq Gen(T_0)$, we get 
$Hom_A(J,\Tcal)=0.$
Consider now the monomorphism $\bar{\phi}:A/J\rightarrow T_0$ induced by $\phi$. Applying the functor $-\otimes_AJ$ to the short exact sequence induced by $\bar{\phi}$, since $Tor_1^A(-,J)=0$, there is a monomorphism $A/J\otimes_AJ\rightarrow T_0\otimes_AJ$. Now, let $f:A^{(I)}\rightarrow T_0$ be an epimorphism, for some set $I$. Then it follows that there is a surjection $f\otimes_AJ:J^{(I)}\rightarrow T_0\otimes_AJ.$ 
Since $J$ is projective and $\Tcal=Add(\Tcal)$, $T_0\otimes_AJ$ lies in $Add(T_0)\subseteq \Tcal$ and, therefore, $f\otimes_AJ=0$, which implies that $T_0\otimes_A J=0$. This shows that $A/J\otimes_A J=J/J^2=0$ and, thus, $J$ is idempotent.
\end{proof}

\begin{proposition}\label{prop silting idempotent}
Let $A$ be a hereditary ring.
\begin{enumerate}
\item An $A$-module $T$ is silting if and only if $T$ is tilting over $A/Ann(T)$ and the ideal $Ann(T)$ is idempotent. In other words, silting $A$-modules are support tilting.
\item\cite[Lemma 4.5]{BS} Let $f:A\rightarrow B$ be a homological ring epimorphism. Then the kernel of $f$ is an idempotent ideal. In particular, $f$ can be written as the composition of two homological ring epimorphisms: $A\ra A/Ker(f)$ and $A/Ker(f)\rightarrow B$.
\end{enumerate}
\end{proposition}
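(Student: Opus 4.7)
For part (1), the plan is as follows.

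Forward direction: assume $T$ is silting. That $T$ is tilting over $\bar A := A/Ann(T)$ is Proposition \ref{tilt silt}(1). For the idempotency of $I := Ann(T)$, I would apply Proposition \ref{tilt silt}(3) to obtain an exact sequence $A \xrightarrow{\phi} T_0 \to T_1 \to 0$ with $T_0, T_1 \in Add(T)$, where $\phi$ is a left $Gen(T)$-approximation and therefore also a left $Add(T)$-approximation. By Proposition \ref{tilt silt}(2), $Add(T)$ consists of Ext-projective objects in $Gen(T)$, so $Ext^1_A(T_0, T_0) = 0$. Lemma \ref{silting is support tilting} with $\Tcal = Add(T)$ then gives $Ker(\phi) = Ann(T) = I$ idempotent.

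Backward direction: assume $T$ is tilting over $\bar A$ and $I$ is idempotent. I would exhibit a projective presentation $\sigma$ of $T$ over $A$ with $\Dcal_\sigma = Gen(T)$. Since $A$ is hereditary, $I \subseteq A$ is projective as a right $A$-module, and I can pick a monomorphic projective presentation $\sigma_T : P \hookrightarrow Q$ of $T$ over $A$. The key idea is to augment this by the zero map $I \to 0$ and form $\sigma := (\sigma_T, 0) : P \oplus I \to Q$. This is still a projective presentation of $T$, because the extra summand contributes nothing to the cokernel.

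The main computation is that $\Dcal_\sigma = Gen(T)$. A direct diagram chase shows that $Hom_A(\sigma, X)$ is surjective iff both $Hom_A(\sigma_T, X)$ is surjective (equivalently $Ext^1_A(T, X) = 0$) and $Hom_A(I, X) = 0$. The latter is equivalent to $X \in I^\circ = \Xcal_{\bar A}$ by Proposition \ref{surjective ring epi}(2), i.e., to $X$ being an $\bar A$-module. So $\Dcal_\sigma = T^{\perp_1, A} \cap Mod(\bar A)$. On $Mod(\bar A)$, Proposition \ref{Tor Ext} identifies $Ext^1_A(T, -)$ with $Ext^1_{\bar A}(T, -)$ (using that $A \to \bar A$ has vanishing $Tor_1$ by Proposition \ref{surjective ring epi}(2)), and since $T$ is tilting over $\bar A$ we have $T^{\perp_1, \bar A} = Gen_{\bar A}(T) = Gen(T)$. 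Hence $\Dcal_\sigma = Gen(T)$, as required.

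Part (2) is attributed to \cite[Lemma 4.5]{BS}; I would quote it directly. The ``in particular'' clause is then immediate: $A \to A/Ker(f)$ is homological by Proposition \ref{surjective ring epi}(2) combined with the hereditary hypothesis (which forces higher $Tor$'s to vanish since the target has projective dimension at most one over $A$), and the remaining factor $A/Ker(f) \to B$ inherits the homological property from $f$. The expected main obstacle is the augmentation trick in the backward direction of (1); everything else is routine once one sees this.
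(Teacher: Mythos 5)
Your proposal is correct and follows essentially the same route as the paper: the forward direction via Proposition \ref{tilt silt} and Lemma \ref{silting is support tilting}, and the backward direction via exactly the same augmented presentation $\sigma_T\oplus(Ann(T)\to 0)$ together with Propositions \ref{surjective ring epi}(2) and \ref{Tor Ext} to identify $\Dcal_\sigma$ with $Gen(T)$. The paper likewise cites \cite[Lemma 4.5]{BS} for part (2), so no discrepancy there.
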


\begin{proof}
(1) Assume that $T$ is silting. Thus, by Proposition \ref{tilt silt}(1), $T$ is tilting over the quotient ring  $\bar {A}:=A/Ann(T)$. Moreover, there is a left $Gen(T)$-approximation $\phi:A\ra T_0$ with $T_0$ in $Add(T)$ and $Ker(\phi)=Ann(T)=Ann(Gen(T))$. Since $T$ is silting, $T_0$ has no self-extensions and, thus, by Lemma \ref{silting is support tilting}, $Ann(T)$ is idempotent. Conversely, suppose that $T$ is a tilting $\bar{A}$-module with $Ann(T)$ idempotent. Consider the projective $A$-presentation $\sigma$ of $T$ given as the direct sum of a monomorphic presentation of $T$ with the trivial map $Ann(T)\ra 0$. Since $Ann(T)$ is idempotent, it follows from Proposition \ref{surjective ring epi}(2) that
$$\Dcal_\sigma=T^{\perp_1}\cap Ann(T)^\circ=Ker(Ext_{\bar{A}}^1(T,-))=Gen(T).$$
Consequently, $T$ is a silting $A$-module.
\end{proof}

Note that a similar statement does not hold without the hereditary assumption.

\begin{example}
Let $T$ be a sincere finitely generated silting module over a finite dimensional $\mathbb{K}$-algebra $\Lambda$ that is not tilting. Such modules $T$ are just non-faithful $\tau$-tilting modules over $\Lambda$ (see \cite{AMV} and \cite{AIR}). Since $T$ is not faithful, $Ann(T)\not= 0$ and since $T$ is sincere, $Ann(T)$ cannot contain any idempotent $e\not= 0$ of $\Lambda$. In particular, it is not an idempotent ideal. Moreover, Example \ref{ex not hom 2} (Case 1) provides an example of a homological ring epimorphism whose kernel is not idempotent.
\end{example}

In the following, we wish to assign a ring epimorphism to a silting module $T$, using the construction of Section 3. To this end, we need a canonical choice of a partial silting module $T_1$ associated with $T$. Therefore we consider the following class of silting modules.

\begin{definition}
Let $A$ be a hereditary ring and $T$ be a silting $A$-module. Then $T$ is called \textbf{minimal}, if $A_A$ admits a minimal left $Add(T)$-approximation.
\end{definition}

Clearly, the definition of minimal silting modules also applies to tilting modules. Note that already in the setting of tilting modules, we obtain many non-trivial examples.

\begin{example}\label{example minimal silting}
Let $A$ be a hereditary ring.
\begin{enumerate}
\item Let $T$ be an endofinite silting $A$-module, i.e., $T$ has finite length over its endomorphism ring. Therefore, by \cite[Theorem 4.1]{KS}, $Add(T)$ is closed for products and, thus, by \cite[Theorem 3.1]{KS}, every $A$-module admits a minimal left $Add(T)$-approximation. In particular, finitely generated silting modules over hereditary Artin algebras are minimal. 
\item Let $A$ be noetherian and consider the minimal injective coresolution of the free module of rank one
$$\xymatrix{0\ar[r] & A\ar[r] & E_1\ar[r] & E_2\ar[r] & 0.}$$
It follows that $T:=E_1\oplus E_2$ is a tilting $A$-module where $Gen(T)$ is given by the class of injective $A$-modules. Since injective envelopes are left-minimal, $T$ is a minimal tilting module.
\item Let $T$ be a tilting $A$-module that arises from a ring epimorphism, say $T=B\oplus B/A\,$ for an injective homological ring epimorphism $f:A\ra B$ (see Theorem \ref{ring epis and tilting}(1)). Then $T$ is minimal. In fact, we have the following canonical $Add(T)$-approximation sequence
$$\xymatrix{0\ar[r] & A\ar[r]^f & B\ar[r] & B/A\ar[r] & 0.}$$
Since $f$ is a reflection map, it is clearly left-minimal. 
\end{enumerate}
\end{example}

Minimal silting modules are motivated by the following construction. Let $A$ be hereditary, and let $T$ be a minimal silting $A$-module with associated torsion class $\Dcal_\sigma=Gen(T)$. Consider the minimal $Add(T)$-approximation sequence
$$\xymatrix{A\ar[r]^\phi & T_0\ar[r] & T_1\ar[r] & 0.}$$
By Proposition \ref{prop silting idempotent}(1), $T$ is a tilting module over the quotient ring $\bar{A}:=A/Ann(T)$ and $Ann(T)$ is idempotent. We get the induced minimal $Add(T)$-approximation sequence in $Mod(\bar{A})$
$$\xymatrix{0\ar[r] & \bar{A}\ar[r]^\phi & T_0\ar[r] & T_1\ar[r] & 0.}$$
Since the ideal $Ann(T)$ is idempotent we get from Proposition \ref{surjective ring epi}(2) that
$$Gen(T)=T_1^{\perp_1}\cap\Xcal_{\bar{A}}=T_1^{\perp_1}\cap Ann(T)^\circ.$$
In particular, $T_1$ is a partial silting $A$-module with respect to the projective presentation $\sigma_1$, given as the direct sum of a monomorphic presentation of $T_1$ with the trivial map $Ann(T)\ra 0$. In fact, we get the equality $\Dcal_{\sigma_1}=Gen(T)=\Dcal_\sigma$. Following Proposition \ref{prop bireflective}, we consider the bireflective subcategory
$$\Ycal=T_1^{\perp}\cap\Xcal_{\bar{A}}$$
associated with $T_1$. The corresponding ring epimorphism will be denoted by $A\ra B_T$. Since the approximation $\phi$ was chosen minimal and, hence, the module $T_1$ is uniquely determined, we obtain a well-defined map from (equivalence classes of) minimal silting modules to (epiclasses of) ring epimorphisms by mapping
$T$ to the ring epimorphism $A\ra B_T$. We need the following technical proposition motivated by the results in \cite[Section 2]{IT}.

\begin{proposition}\label{technical proposition}
Let $A$ be a hereditary ring and $T$ be a minimal silting $A$-module with associated ring epimorphism $A\ra B_T$. Then $\Xcal_{B_T}$ coincides with
$${\mathfrak a}(Gen(T)):=\{X\in Gen(T)\mid\forall(g:Y\ra X)\in Gen(T),\, Ker(g)\in Gen(T)\}.$$
Moreover, if $A\ra T_0$ is the minimal left $Add(T)$-approximation, then we have $Proj(\Xcal_{B_T})=Add(T_0)$. 
\end{proposition}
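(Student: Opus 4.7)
The plan is to reduce both equalities to the single identity $Hom_A(T_1,T_0)=0$, and then derive that identity from the minimality of $\phi$. First, note that $\Xcal_{B_T}=\Ycal=T_1^{\perp}\cap\Xcal_{\bar A}$ coincides with $\{X\in Gen(T):Hom_A(T_1,X)=0\}$, since $Gen(T)=T_1^{\perp_1}\cap\Xcal_{\bar A}$ was recorded in the paragraph preceding the proposition.

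For $\Xcal_{B_T}\subseteq{\mathfrak a}(Gen(T))$: given $X\in\Xcal_{B_T}$ and $g:Y\to X$ with $Y\in Gen(T)$, apply $Hom_A(T_1,-)$ to $0\to Ker(g)\to Y\to Im(g)\to 0$. Since $Im(g)\hookrightarrow X\in T_1^{\circ}$, we have $Hom_A(T_1,Im(g))=0$; and since $Y\in Gen(T)=T^{\perp_1}\cap\Xcal_{\bar A}$, one has $Ext_A^1(T_1,Y)=0$ (using Proposition~\ref{Tor Ext} and the idempotency of $Ann(T)$). The long exact sequence then forces $Ext_A^1(T_1,Ker(g))=0$, and combined with $Ker(g)\in\Xcal_{\bar A}$ (subobject closure from Proposition~\ref{surjective ring epi}(1)), this gives $Ker(g)\in Gen(T)$. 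For the reverse inclusion, observe $Gen(T)=Gen(T_0)$ via $\pi:T_0\twoheadrightarrow T_1$, so any $X\in{\mathfrak a}(Gen(T))$ fits in $0\to N\to T_0^{(I)}\to X\to 0$ with $N\in Gen(T)$ by the wide property; applying $Hom_A(T_1,-)$ and using $Ext_A^1(T_1,N)=0$ yields a surjection $Hom_A(T_1,T_0^{(I)})\twoheadrightarrow Hom_A(T_1,X)$, whereupon $Hom_A(T_1,T_0)=0$ forces $Hom_A(T_1,X)=0$. Analogously, $Proj(\Xcal_{B_T})=Add(T_0)$ reduces to the same identity: under the equivalence $\Xcal_{B_T}\simeq Mod(B_T)$ the projective $B_T$-modules correspond to $Add(\bar T_0)$, where $\bar T_0:=T_0/\tau_{T_1}(T_0)\cong B_T$ as an $A$-module (Step~3 of the proof of Theorem~\ref{thm2}), and $Hom_A(T_1,T_0)=0$ gives $\tau_{T_1}(T_0)=0$ and hence $\bar T_0=T_0$.

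To prove the key identity, take $h:T_1\to T_0$. The composite $h\pi\in End_A(T_0)$ satisfies $(h\pi)\phi=h(\pi\phi)=0$, so $(id_{T_0}+h\pi)\phi=\phi$, and minimality of $\phi$ forces $id_{T_0}+h\pi$ to be an isomorphism. The left ideal $J:=\{f\in End_A(T_0):f\phi=0\}\cong Hom_A(T_1,T_0)$ therefore satisfies $id_{T_0}+J\subseteq End_A(T_0)^{\times}$, so $J\subseteq rad(End_A(T_0))$. The main obstacle is to strengthen this containment to $J=0$; the intended argument combines hereditariness (yielding the four-term sequence $0\to Hom_A(T_1,\bar A)\to Hom_A(T_1,T_0)\to End_A(T_1)\to Ext_A^1(T_1,\bar A)\to 0$ from applying $Hom_A(T_1,-)$ to the approximation sequence and using $Ext_A^1(T_1,T_0)=0$) with a direct-summand argument: a nonzero $\tau_{T_1}(T_0)$ would give, via the splitting of $T_0$ along the torsion pair $(Gen(T_1),T_1^{\circ})$, a nonzero summand of $T_0$ on which $\phi$ restricts to zero, contradicting minimality.
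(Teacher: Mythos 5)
Your overall strategy---reduce both assertions to the single identity $Hom_A(T_1,T_0)=0$ and then extract that identity from minimality---is coherent, and the deductions you make from the identity are essentially sound: the inclusion $\Xcal_{B_T}\subseteq{\mathfrak a}(Gen(T))$ is the same long-exact-sequence argument as in the paper, the reverse inclusion via $0\to N\to T_0^{(I)}\to X\to 0$ works (a map into a direct sum vanishes once all its components do), and identifying $Proj(\Xcal_{B_T})$ with $Add$ of $B_T\cong T_0/\tau_{T_1}(T_0)$ is correct. The problem is that $Hom_A(T_1,T_0)=0$ is not an auxiliary fact but essentially the whole content of the proposition (it amounts to $T_0\in\Xcal_{B_T}$), and your argument for it has a genuine gap. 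The observations that $J=\{f\in End_A(T_0)\mid f\circ\phi=0\}\cong Hom_A(T_1,T_0)$ lies in $rad(End_A(T_0))$ and that there is a four-term exact sequence relating $Hom_A(T_1,T_0)$ to $End_A(T_1)$ give no leverage towards $J=0$ (note that for a non-minimal approximation the identity can genuinely fail, e.g.\ for the Lukas tilting module, so any proof must use minimality in a stronger way). The decisive step, that a nonzero $\tau_{T_1}(T_0)$ would yield ``via the splitting of $T_0$ along the torsion pair $(Gen(T_1),T_1^{\circ})$'' a nonzero summand of $T_0$ on which $\phi$ restricts to zero, fails twice: torsion pairs do not split, so $\tau_{T_1}(T_0)$ need not be a direct summand of $T_0$, and neither hereditariness nor minimality provides such a splitting; and even granting a decomposition $T_0=\tau_{T_1}(T_0)\oplus C$, left minimality is only violated if the component of $\phi$ into $\tau_{T_1}(T_0)$ vanishes, which you neither prove nor have an evident reason to expect. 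So the key identity, and with it both equalities, remains unproved.

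For comparison, the paper never assumes $Hom_A(T_1,T_0)=0$; it obtains it as a consequence. It proves $Hom_{\bar{A}}(T_1,{\mathfrak a}(Gen(T)))=0$ by taking a surjection $\omega\colon T_1\to C$ with $C\in{\mathfrak a}(Gen(T))$, pulling it back along $0\to\bar{A}\to T_0\to T_1\to 0$: the kernel $K$ of $T_0\to C$ lies in $Gen(T)$ because $C\in{\mathfrak a}(Gen(T))$, hence $A\to K$ factors through the minimal approximation $\phi$, and left minimality forces $K\to T_0$ to be onto, i.e.\ $C=0$. Separately, it shows $T_0\in{\mathfrak a}(Gen(T))$ by proving, again via minimality, that every epimorphism in $Gen(T)$ onto $T_0$ splits, where hereditariness enters through \cite[Proposition 2.15]{IT} to allow epimorphic test maps only. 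If you want to keep your order of argument, you need an independent proof of $Hom_A(T_1,T_0)=0$ of essentially this pullback-plus-minimality type; the radical containment and the four-term sequence will not deliver it.
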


\begin{proof} 
Let $\phi:A\rightarrow T_0$ be the minimal left $Add(T)$-approximation and $T_1=Coker(\phi)$. Let $\bar{A}=A/Ann(T)$ and observe that, by Lemma \ref{CTT compare}, we have
$$\Xcal_{B_T}=\{X\in\Xcal_{\bar{A}}\mid Hom_{\bar{A}}(T_1,X)=0=Ext_{\bar{A}}^1(T_1,X)\}.$$

We first prove that $\Xcal_{B_T}\subseteq{\mathfrak a}(Gen(T))$. Take $X$ in $\Xcal_{B_T}$. Since $Ext_{\bar{A}}^1(T_1,X)=0$ and $T$ is a tilting module over $\bar{A}$, the module $X$ lies in $Gen(T)$. Now consider a test map $g:Y\rightarrow X$ with $Y$ in $Gen(T)$. Without loss of generality, we may assume $g$ to be surjective, since $Im(g)$ also lies in $\Xcal_{B_T}$. Moreover, note that $Ker(g)$ belongs to $\Xcal_{\bar{A}}$, since so do $X$ and $Y$. By applying the functor $Hom_{\bar{A}}(T_1,-)$ to the short exact sequence induced by $g$, we obtain the exact sequence
$$\xymatrix{Hom_{\bar{A}}(T_1,Y)\ar[r] & Hom_{\bar{A}}(T_1,X)\ar[r] & Ext_{\bar{A}}^1(T_1,Ker(g))\ar[r] & Ext_{\bar{A}}^1(T_1,Y).}$$
Since, by assumption, $Hom_{\bar{A}}(T_1,X)=0$ and $Y$ lies in $Gen(T)$ (showing that $Ext_{\bar{A}}^1(T_1,Y)=0$), it follows that $Ext_{\bar{A}}^1(T_1,Ker(g))=0$. This proves that $Ker(g)$ lies in $Gen(T)$ and, thus, $X$ lies in ${\mathfrak a}(Gen(T))$.

Conversely, since ${\mathfrak a}(Gen(T))\subseteq Gen(T)$, it is enough to show that $Hom_{\bar{A}}(T_1,{\mathfrak a}(Gen(T)))=0$. By definition, ${\mathfrak a}(Gen(T))$ is closed for subobjects in $Gen(T)$. In particular, the image of any morphism from an object in $Gen(T)$ to an object in ${\mathfrak a}(Gen(T))$ is itself in ${\mathfrak a}(Gen(T))$. Thus, to prove our claim, it is enough to show that there are no surjections from $T_1$ to any object $C$ in ${\mathfrak a}(Gen(T))$. Let $\omega:T_1\rightarrow C$ be such a surjection and consider the commutative diagram
$$\xymatrix{& A\ar[d]^d\ar[r]^1 & A\ar[d]^\phi\\ 0\ar[r]& K\ar[r]^a\ar[d]^b&T_0\ar[r]^{\omega\circ \psi}\ar[d]^\psi&C\ar[r]\ar[d]^1&0\\ 0\ar[r]& X\ar[d]\ar[r]^c&T_1\ar[d]\ar[r]^\omega&C\ar[r]&0\\ & 0& 0}$$
with exact rows and columns. Since $C$ lies in ${\mathfrak a}(Gen(T))$, $K$ lies in $Gen(T)$. Thus, $d$ factors through $\phi$, i.e., there is a map $e:T_0\rightarrow K$ such that $e\circ\phi = d$. This shows that $a\circ e\circ \phi=a\circ d=\phi$ which by minimality of $\phi$ yields that $a\circ e$ is an isomorphism. In particular, $a$ is an epimorphism and $C=0$.

Finally, let us show that $Proj(\Xcal_{B_T})=Add(T_0)$ by observing that $T_0$ is a projective generator in ${\mathfrak a}(Gen(T))$. Since $A$ is hereditary, by \cite[Proposition 2.15]{IT},  ${\mathfrak a}(Gen(T))$  coincides with
$$\{X\in Gen(T)\mid\forall(g:Y\twoheadrightarrow X)\in Gen(T),\, Ker(g)\in Gen(T)\}.$$
First we see that $T_0$ lies in ${\mathfrak a}(Gen(T))$. Given $X$ in $Gen(T)$ and an epimorphism $\omega:X\rightarrow T_0$, since $A$ is projective there is $\beta:A\rightarrow X$ such that $\omega\circ\beta=\phi$. Moreover, since $T_0$ is a left $Gen(T)$-approximation, there is $\gamma:T_0\rightarrow X$ such that $\beta=\gamma\circ\phi$. Therefore, we have that $\omega\circ\gamma\circ\phi=\omega\circ\beta=\phi$, which by minimality of $\phi$ shows that $\omega$ is a split epimorphism and, thus, $Ker(\omega)$ lies in $Gen(T)$. Now, $T_0$ is projective in ${\mathfrak a}(Gen(T))$ since $Hom_A(T_0,-)$ is exact for short exact sequences $Gen(T)$. Moreover, it is a generator of $Gen(T)$ and, hence, also a generator for ${\mathfrak a}(Gen(T))$, thus proving our claim.
\end{proof}

\begin{remark}
The first part of the above proof shows the following. Let $A$ be any ring and $T$ be a silting $A$-module such that $A$ admits a minimal left $Gen(T)$-approximation $\phi:A\rightarrow T_0$. Then the bireflective subcategory $Gen(T)\cap Coker(\phi)^\circ$ coincides with ${\mathfrak a}(Gen(T))$. However, notice that, in contrast to the hereditary setting, $Hom_A(Coker(\phi),T_0)$ may not vanish and $T_0$ does not generally lie in ${\mathfrak a}(Gen(T))$.
\end{remark}

Now we are able to state the main result of this section.

\begin{theorem}\label{main silting epi}
Let $A$ be a hereditary ring. Then the assignment $\alpha:T\mapsto(f:A\ra B_T)$ yields a bijection between 
\begin{enumerate}
\item equivalence classes of minimal silting $A$-modules;
\item epiclasses of homological ring epimorphisms of $A$.
\end{enumerate}
Moreover, $\alpha$ restricts to a bijection between
\begin{enumerate}
\item equivalence classes of minimal tilting $A$-modules;
\item epiclasses of injective homological ring epimorphisms of $A$.
\end{enumerate}
\end{theorem}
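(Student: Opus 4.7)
The plan is to exhibit an inverse $\beta$ to $\alpha$ given by $\beta(f):=B\oplus\Coker(f)$, and then to extract the tilting restriction from the injectivity condition on $f$. That $\alpha$ is well-defined is immediate: $Tor_1^A(B_T,B_T)=0$ by Proposition \ref{Tor Ext} and the bireflective construction, and since $A$ is hereditary every $A$-module has projective dimension at most one, so all higher $Tor$'s vanish and $A\to B_T$ is homological.

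The main step is checking that $T:=\beta(f)=B\oplus\Coker(f)$ is minimal silting. Writing $J:=\Ker(f)$ (idempotent by Proposition \ref{prop silting idempotent}(2)), $f$ factors through an injective homological ring epimorphism $g:\bar{A}:=A/J\to B$. A direct computation gives $J\otimes_A\bar{A}=J/J^2=0$, hence $Tor_1^A(\bar{A},B)=J\otimes_A B=0$; applying $-\otimes_A\bar{A}$ to an $A$-projective resolution of $B$ therefore yields a projective resolution of $B$ over $\bar{A}$ of length at most one. Theorem \ref{ring epis and tilting}(1) now shows $T$ is tilting over $\bar{A}$, and since $Ann(T)=J$ is idempotent, Proposition \ref{prop silting idempotent}(1) lifts this to silting over $A$. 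The sequence $A\xrightarrow{f} B\to \Coker(f)\to 0$ is a left $Add(T)$-approximation sequence (every object in $Add(T)$ is a $B$-module, so any $A$-linear map out of $A$ factors through $f$), and it is left-minimal because $f$ being a ring epimorphism forces every $A$-linear endomorphism of $B$ to be $B$-linear; so $\theta\circ f=f$ gives $\theta(1_B)=1_B$, making $\theta$ the identity.

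For $\alpha\circ\beta=\mathrm{id}$, the minimal approximation of $\beta(f)$ is $f$ itself and $T_1=\Coker(f)$; the bireflective subcategory $T_1^\perp\cap\Xcal_{\bar{A}}$ equals $\Xcal_B$ by combining Lemma \ref{CTT compare} with Theorem \ref{ring epis and tilting}(1) applied to $g$, giving $B_{\beta(f)}\cong B$. For $\beta\circ\alpha=\mathrm{id}$, starting from a minimal silting $T$ with minimal approximation $A\xrightarrow{\phi} T_0\to T_1\to 0$, Proposition \ref{technical proposition} gives $T_0\in\Xcal_{B_T}$; so $\phi$ is a left $\Xcal_{B_T}$-approximation, and by left-minimality it is isomorphic to the $\Xcal_{B_T}$-reflection $A\to B_T$, whence $B_T\cong T_0$ and $\beta(\alpha(T))=T_0\oplus T_1$. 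Applying $Hom_A(-,T)$ to the approximation sequence and using $Ext^1_{\bar{A}}(T_1,T)=0$ (from $T$ being tilting over $\bar{A}$) yields $T\in Gen(T_0)$, so $Gen(T_0\oplus T_1)=Gen(T)$ and $T$ is equivalent to $T_0\oplus T_1$ as silting modules. The tilting restriction is then immediate: $T$ is tilting iff $Ann(T)=0$ iff $\Ker(f)=0$, i.e., $f$ injective.

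The main obstacle I anticipate is the projective-dimension computation for $B$ over $\bar{A}$, which is precisely what legitimises Theorem \ref{ring epis and tilting}(1) in the quotient setting before descending back to $A$ via Proposition \ref{prop silting idempotent}.
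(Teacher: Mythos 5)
Your proposal follows essentially the same route as the paper: well-definedness via extension-closure of $\Xcal_{B_T}$ and hereditariness, an explicit inverse $f\mapsto B\oplus Coker(f)$ obtained by factoring $f$ through $\bar{A}=A/Ker(f)$ (idempotent kernel, Proposition \ref{prop silting idempotent}(2)) and applying Theorem \ref{ring epis and tilting}(1), and the identification $B_T\cong T_0$ via Proposition \ref{technical proposition} together with left-minimality of reflections. Your Tor-computation showing $pd_{\bar{A}}(B)\le 1$ is a legitimate substitute for the paper's observation that $\bar{A}$ is again hereditary (being a homological quotient of a hereditary ring), though for a resolution of the right module $B$ you should invert the sides and check $Tor_1^A(B,\bar{A})=0$, i.e.\ $B\otimes_AKer(f)=0$; the same idempotency argument gives this. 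Your direct argument for left-minimality of $f$ (any $A$-endomorphism of $B$ is $B$-linear, hence determined by $\theta(1_B)$) is fine and matches Example \ref{example minimal silting}(3).

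There is, however, one incorrect justification: the parenthetical claim that ``every object in $Add(T)$ is a $B$-module'', used to show that $f:A\to B$ is a left $Add(T)$-approximation. This is false in general, since $Coker(f)$ need not carry a $B$-module structure (for $\mathbb{Z}\to\mathbb{Q}$ the cokernel $\mathbb{Q}/\mathbb{Z}$ is not a $\mathbb{Q}$-module); only the summand $B$ of $T$ lies in $\Xcal_B$. The approximation property is true but needs the tilting-theoretic argument: for $X\in Add(T)\subseteq Gen(T)=T^{\perp_1}$ in $Mod(\bar{A})$, apply $Hom_{\bar{A}}(-,X)$ to $0\to\bar{A}\to B\to Coker(f)\to 0$ and use $Ext^1_{\bar{A}}(Coker(f),X)=0$ to see that every map $\bar{A}\to X$ (equivalently, every map $A\to X$, since $X\,Ker(f)=0$) factors through $f$. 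This is precisely how the paper obtains the approximation sequence (via Theorem \ref{ring epis and tilting} and the remark after Definition \ref{arising}), so the gap is local and easily repaired; with that fix, the remaining steps, including $Ker(f)=Ker(\phi)=Ann(T)$ for the tilting/injectivity correspondence, go through as you describe.
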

\begin{proof}
First, we check that $\alpha$ is well-defined. By construction, the ring epimorphism $f$ is uniquely determined by $T$. Moreover, by Proposition \ref{prop bireflective}, the subcategory $\Xcal_{B_T}$ is closed under extensions in $Mod(A)$ and, therefore, by Proposition \ref{Tor Ext}, $Tor_1^A(B_T,B_T)=0$. Since $A$ is hereditary, all higher $Tor$-groups vanish showing that the ring epimorphism $f$ is homological. 

The injectivity of $\alpha$ follows from Proposition \ref{technical proposition}: if $T$ and $T'$ are minimal silting modules with $\alpha(T)=\alpha(T')$, then we have that $Add(T_0)=Add(T_0')$ and, thus, $T$ and $T'$ are equivalent.

Next, we prove the surjectivity of $\alpha$. Let $f:A\ra B$ be a homological ring epimorphism. By Proposition \ref{prop silting idempotent}(2), we get a commutative diagram of homological ring epimorphisms\vspace{-0.2cm}
$$\xymatrix{A\ar[rr]^f\ar[dr] & & B\\ & A/Ker(f)\ar[ur]^{f'} & }$$
where $f'$ is injective and the quotient ring $\bar{A}:=A/Ker(f)$ is again hereditary. Thus, by Theorem \ref{ring epis and tilting}(1), $T:=B\oplus B/\bar{A}$ is a tilting module over $\bar{A}$ and it is minimal as argued in Example \ref{example minimal silting}(3). By Proposition \ref{prop silting idempotent}(1), $T$ becomes a minimal silting $A$-module with respect to a projective $A$-presentation $\sigma$ of $T$ that is given as the direct sum of a monomorphic presentation of $T$ and the trivial map $Ker(f)\ra 0$.
It remains to check that $\alpha(T)$ lies in the same epiclass as the ring epimorphism $f:A\ra B$. Since the minimal left $Add(T)$-approximation of $A$ is given by the module map $f:A\ra B$, by construction, we have
$$\Xcal_{B_T}=(B/\bar{A})^\perp\cap\Xcal_{\bar{A}}.$$ 
But this coincides with $\Xcal_B$ by Theorem \ref{ring epis and tilting}(1).

It follows from the previous arguments that the inverse $\alpha^{-1}$ of $\alpha$ assigns to a homological ring epimorphism $f:A\ra B$ the minimal silting $A$-module $B\oplus Coker(f)$. Therefore, in case $f$ is injective, the module $\alpha^{-1}(f)$ is actually a tilting module, by Theorem \ref{ring epis and tilting}(1). It remains to check the restriction of the map $\alpha$. Let $T$ be a minimal tilting $A$-module with a monomorphic minimal $Add(T)$-approximation $\phi:A\ra T_0$. Using Proposition \ref{technical proposition}, it follows that $T_0$ lies in $\Xcal_{B_T}$ and, thus, $\phi$ is the $\Xcal_{B_T}$-reflection of $A$. In particular, the ring epimorphism $A\ra B_T$ is injective.
\end{proof}

We have the following immediate corollary of Theorem \ref{main silting epi}.

\begin{corollary}
Let $A$ be a hereditary ring. Then a tilting $A$-module $T$ arises from a ring epimorphism if and only if $T$ is minimal.
\end{corollary}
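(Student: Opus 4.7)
The plan is to deduce the corollary as a direct consequence of Theorem \ref{main silting epi} together with Definition \ref{arising} and Example \ref{example minimal silting}(3), without any further computation. Both implications are essentially unwinding of definitions once the bijection $\alpha$ is in place.

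For the ``only if'' direction, I would assume $T$ arises from a ring epimorphism $f:A\to B$. By Definition \ref{arising}, $f$ is injective and $T$ is equivalent to $B\oplus B/A$, which fits into the canonical $Add(T)$-approximation sequence $0\to A\to B\to B/A\to 0$. This is precisely the situation discussed in Example \ref{example minimal silting}(3), where it is noted that such a sequence has a left-minimal first map (being the $\Xcal_B$-reflection of $A$), so $T$ is minimal.

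For the ``if'' direction, I would assume $T$ is a minimal tilting $A$-module and apply the restriction of the bijection $\alpha$ from Theorem \ref{main silting epi}. This yields an injective homological ring epimorphism $f:A\to B_T$. The explicit description of the inverse $\alpha^{-1}$ given in the proof of that theorem identifies $T$ (up to equivalence) with $B_T\oplus Coker(f)=B_T\oplus B_T/A$. By Definition \ref{arising}, this is exactly the statement that $T$ arises from the ring epimorphism $f$.

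Since both implications rest entirely on results already established, there is no real obstacle; the only point requiring care is matching the formulation of Definition \ref{arising} (an injective ring epimorphism with $B\oplus B/A$ tilting and equivalent to $T$) with the output of $\alpha^{-1}$ in Theorem \ref{main silting epi}, and with the minimality provided by Example \ref{example minimal silting}(3). The proof should therefore be short, of the form: ``By Example \ref{example minimal silting}(3), every tilting module arising from a ring epimorphism is minimal. Conversely, a minimal tilting module $T$ yields, via Theorem \ref{main silting epi}, an injective homological ring epimorphism $f:A\to B_T$ such that $T$ is equivalent to $B_T\oplus B_T/A$, whence $T$ arises from $f$ in the sense of Definition \ref{arising}.''
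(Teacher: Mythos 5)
Your proposal is correct and matches the paper's intent: the paper presents this as an ``immediate corollary'' of Theorem \ref{main silting epi} with no separate proof, and your two implications (minimality via the left-minimality of the reflection map as in Example \ref{example minimal silting}(3), and the converse via the explicit form of $\alpha^{-1}$ in the proof of Theorem \ref{main silting epi}) are exactly the intended unwinding.
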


\begin{example}\label{Kronecker}
Let $\Lambda$ be the Kronecker algebra, i.e., the path algebra of the quiver $\xy\xymatrixcolsep{2pc}\xymatrix{ \bullet \ar@<0.5ex>[r]  \ar@<-0.5ex>[r] & \bullet } \endxy$ over a field $\Kbb$. First observe that a non-zero support tilting module which is not tilting is equivalent to a simple $\Lambda$-module. These are clearly minimal silting modules. From \cite{Ma} and \cite{AS2} we know that all tilting modules except the Lukas tilting module (see \cite{L1} and \cite{L2}) arise from ring epimorphisms. By our result this is a classification of all minimal silting $\Lambda$-modules and the unique non-minimal silting module is the Lukas tilting module.

Let us briefly analyse the Lukas tilting module $L$ in more detail. 
The tilting class $Gen(L)$ is given by the $\Lambda$-modules without any indecomposable preprojective summands. Since $Add(L')=Add(L)$ for all non-zero direct summands $L'$ of $L$ (see \cite[Theorem 6.1]{L1} and \cite[Theorem 3.1]{L2}), we conclude that for all $Add(L)$-approximation sequences
$$\xymatrix{0\ar[r] & \Lambda\ar[r] & L_0\ar[r] & L_1\ar[r] & 0}$$
the bireflective subcategory $L_1^\perp$ only contains the zero-module. 
\end{example}

Following \cite{IT}, we refer to full, abelian and extension closed subcategories of $mod(A)$  as \textbf{wide}, and we call them  \textbf{finitely generated} if they contain a generator.
\begin{corollary}\cite[Section 2]{IT},\cite[Theorem 4.2]{Ma}\label{IngallsThomas} If $\Lambda$ is a finite dimensional hereditary algebra, the map $\alpha$ from Theorem \ref{main silting epi} restricts to a bijection between 
\begin{enumerate}
\item equivalence classes of finitely generated support tilting {$\Lambda$-}modules;
\item epiclasses of homological ring epimorphisms  $\Lambda\to B$ with $B$ finite dimensional;
\end{enumerate}
and there is a further bijection with
\begin{enumerate}
\item[(3)] finitely generated wide subcategories of $mod(\Lambda)$
\end{enumerate}
by assigning to a ring epimorphism $\Lambda\to B$ the class $\mathcal X_B\cap mod(\Lambda)\cong mod(B)$.
\end{corollary}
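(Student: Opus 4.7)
The plan is to combine the bijection $\alpha$ from Theorem \ref{main silting epi} with the classical Ingalls--Thomas correspondence, carefully checking the finite-dimensionality hypotheses on both sides.

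First I would verify that $\alpha$ restricts to a bijection between (1) and (2). Every finitely generated $\Lambda$-module has finite length, and is therefore endofinite, so by Example \ref{example minimal silting}(1) every finitely generated silting $\Lambda$-module is minimal; combined with Proposition \ref{prop silting idempotent}(1), which identifies silting $\Lambda$-modules with support tilting modules, this shows that $\alpha$ is defined on (1). Since $B_T$ is a quotient of $\End_\Lambda(T)$ by Theorem \ref{thm2}, and $\End_\Lambda(T)$ is finite dimensional whenever $T$ is, $\alpha(T)$ has finite dimensional target. Conversely, the construction of $\alpha^{-1}$ in the proof of Theorem \ref{main silting epi} sends a homological ring epimorphism $f\colon \Lambda\to B$ to $B\oplus B/\bar{\Lambda}$ where $\bar{\Lambda}=\Lambda/\Ker(f)$, and both summands are finite dimensional when $B$ is.

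Next I would analyse the assignment $\beta\colon (\Lambda\to B)\mapsto \mathcal{X}_B\cap \fdmod(\Lambda)$. Since $B$ is finite dimensional, a module in $\fdmod(\Lambda)$ lies in $\mathcal{X}_B$ precisely when its $\Lambda$-action factors through a (necessarily finite dimensional) $B$-module structure, yielding the isomorphism $\mathcal{X}_B\cap \fdmod(\Lambda)\cong \fdmod(B)$. This intersection is abelian (being equivalent to $\fdmod(B)$) and, by Proposition \ref{Tor Ext} applied to the homological ring epimorphism $f$, is extension-closed in $\fdmod(\Lambda)$, hence wide; it contains the generator $B$, so it is finitely generated.

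Finally, to prove that $\beta$ is bijective, I would identify $\beta\circ\alpha$ with the Ingalls--Thomas assignment and invoke its bijectivity from \cite[Section 2]{IT}. By Proposition \ref{technical proposition} we have $\mathcal{X}_{B_T}=\mathfrak{a}(Gen(T))$, so $\beta(\alpha(T))=\mathfrak{a}(Gen(T))\cap\fdmod(\Lambda)$, which is exactly the finitely generated wide subcategory attached to the support tilting module $T$ in \cite{IT}. Bijectivity of $\beta$ then follows from that of $\alpha$ together with Ingalls--Thomas. The main obstacle is this last identification: one must check that the description of $\mathfrak{a}(Gen(T))$ extracted from Proposition \ref{technical proposition} coincides with the wide subcategory assigned to $T$ in \cite{IT} in the finite-dimensional setting, translating between our general categorical formulation and the combinatorial one used there.
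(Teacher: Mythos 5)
Your proposal is correct and follows what is essentially the paper's (implicit) derivation: the corollary is stated without a separate proof, being obtained exactly as you do by restricting the bijection of Theorem \ref{main silting epi} via finite-dimensionality arguments on both sides and then matching $f\mapsto \mathcal{X}_B\cap mod(\Lambda)$ with the Ingalls--Thomas assignment through Proposition \ref{technical proposition}, with \cite{IT} and \cite{Ma} covering the final identification. One small caveat: Theorem \ref{thm2} concerns the Bongartz completion $T_1\oplus M_A$ from Section 3, which is only equivalent to (and need not coincide with) the finitely generated representative $T$, so the claim that $B_T$ is a quotient of $End_\Lambda(T)$ is not a verbatim consequence; this is easily repaired, e.g.\ by observing that $B_T$, as a right $\Lambda$-module, is the $\mathcal{X}_{B_T}$-reflection of $\Lambda$ and hence a quotient $T_0/\tau_{T_1}(T_0)$ of the finite dimensional module $T_0$ coming from the minimal $Add(T)$-approximation.
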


In \cite{IT,R} further bijections are established, providing a combinatorial interpretation of finitely generated support tilting modules in terms of noncrossing partitions,   clusters, or antichains.
The  poset of exceptional antichains, which is isomorphic to the poset of generalised noncrossing partitions (\cite[Theorem 3.6.6]{R}), is  defined via the partial order on finitely generated wide subcategories given by inclusion. 
By Corollary \ref{IngallsThomas},
the latter corresponds to the following  partial order on the  epiclasses of homological ring epimorphisms of $A$. Given $f_1:A\to B_1$ and $f_2:A\to B_2$, we set $$f_1\ge f_2$$ if there is a ring homomorphism $g:B_1\to B_2$ such that $g\circ f_1=f_2$ (it is indeed a partial order because ring epimorphisms are left minimal maps). 
In order to investigate $\le$, we first need some terminology.

\begin{theorem}{\cite[Theorem~4.1]{Sch}}\label{def:universallocalisation}
Let $A$ be a ring and $\Sigma$ be a class of morphisms between
finitely generated projective right $A$-modules. Then
there is a ring homomorphism 
$f: A\rightarrow A_\Sigma$ such that
\begin{enumerate}
\item $f$ is \emph{$\Sigma$-inverting,} i.e.~if
$\sigma$ belongs to  $\Sigma$, then
$\sigma\otimes_A A_\Sigma$ is an isomorphism of right
$A_\Sigma$-modules, and \item $f$ is \emph{universal
$\Sigma$-inverting}, i.e.~for any $\Sigma$-inverting morphism $f': A\rightarrow B$
there exists a unique ring homomorphism ${g}:
A_\Sigma\rightarrow B$ such that $g\circ f=f'$.
\end{enumerate}
The homomorphism
$f\colon A\rightarrow A_\Sigma$ is a ring epimorphism
with  $Tor_1^{A}(A_\Sigma,{A_\Sigma})=0$, called
the \textbf{universal localisation} of $A$ at
$\Sigma$.
\end{theorem}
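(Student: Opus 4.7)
The plan is to construct $A_\Sigma$ by freely adjoining formal inverses to the maps in $\Sigma$, and then verify the three conclusions in turn. First I would reduce to the case of morphisms between free modules: for each $\sigma\colon P\to Q$ in $\Sigma$, choose finitely generated projective complements $P'$, $Q'$ with $P\oplus P'\cong A^n$ and $Q\oplus Q'\cong A^m$, and augment $\sigma$ by suitable identity maps on the complements so that inverting $\sigma$ becomes equivalent to inverting a matrix $M^\sigma$ over $A$. Then $A_\Sigma$ can be described as the quotient of the free $A$-ring $A\langle x^\sigma_{ij}\mid\sigma\in\Sigma,\,i,j\rangle$ by the relations asserting that the matrix $X^\sigma=(x^\sigma_{ij})$ is a two-sided inverse of $M^\sigma$, together with the canonical map $f\colon A\to A_\Sigma$.

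By construction $f$ is $\Sigma$-inverting, and the universal property is almost tautological: any $\Sigma$-inverting $f'\colon A\to B$ produces a canonical inverse for each $\sigma\otimes_A B$ over $B$, whose matrix entries give values for the generators $x^\sigma_{ij}$ that automatically satisfy the defining relations, yielding a unique ring homomorphism $g\colon A_\Sigma\to B$ with $g\circ f=f'$. That $f$ is a ring epimorphism then follows formally: if $g_1,g_2\colon A_\Sigma\to R$ satisfy $g_1\circ f=g_2\circ f$, then this common composite is $\Sigma$-inverting (being the restriction of either extension), and so the uniqueness clause in the universal property forces $g_1=g_2$.

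The substantive step, and what I expect to be the main obstacle, is the vanishing $Tor_1^A(A_\Sigma,A_\Sigma)=0$. The strategy is to exhibit $A_\Sigma$ as the filtered colimit of the rings $A_{\Sigma'}$ for $\Sigma'$ ranging over finite subsets of $\Sigma$; since $Tor$ commutes with filtered colimits in each argument, this reduces matters to the case of inverting a single morphism, and then by induction on the size of $\Sigma$ to inverting one additional $\sigma\colon P\to Q$. For a single $\sigma$, one establishes an explicit normal form for the elements of $A_\Sigma$ by iteratively rewriting products using the relations $X^\sigma M^\sigma=I$ and $M^\sigma X^\sigma=I$; this normal form yields a concrete flat resolution of $A_\Sigma$ as a right $A$-module, and tensoring it with $A_\Sigma$, together with the fact that $\sigma\otimes_A A_\Sigma$ is an isomorphism by conclusion (1), gives the desired vanishing by a direct diagram chase. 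The delicate point is termination and well-definedness of the normal form (equivalently, the flatness of the resolution), which is where the matrix reduction of the first paragraph pays off: working with the formal generators $x^\sigma_{ij}$ allows a word-length induction to handle the combinatorics cleanly, avoiding complications arising from $P$ and $Q$ being merely projective.
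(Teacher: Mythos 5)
The paper does not prove this result; it is cited verbatim from Schofield's book, \cite[Theorem~4.1]{Sch}, so there is no in-paper proof to compare against. I will therefore assess your outline on its own merits.

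The first half of your proposal is fine. Adjoining formal inverses of matrices and reading off the universal property, and then deducing that $f$ is a ring epimorphism from the uniqueness clause, is the standard construction and is correct. (One small caveat: the reduction of a morphism $\sigma\colon P\to Q$ between projectives to a square matrix over $A$ is not automatic. If $P\oplus P'=A^n$ and $Q\oplus Q'=A^m$, then $\sigma\oplus 1_{P'}\colon A^n\to Q\oplus P'$ has free source but its target $Q\oplus P'$ need not be free, and iterating does not terminate in general. Schofield and Cohn handle maps between projectives directly, without assuming a free reduction. This gap is repairable but you should not gloss over it.)

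The genuine problem is the $Tor_1$-vanishing, which is exactly the nontrivial content of the theorem, and your outline does not fill it. Two issues. First, the inductive reduction to a single $\sigma$ is not free: after inverting $\Sigma'$ you have a ring epimorphism $A\to A_{\Sigma'}$ with $Tor_1^A(A_{\Sigma'},A_{\Sigma'})=0$ and a further ring epimorphism $A_{\Sigma'}\to (A_{\Sigma'})_{\{\sigma\}}=A_{\Sigma'\cup\{\sigma\}}$ with the analogous property over $A_{\Sigma'}$, and you need the transitivity statement that the composite then has $Tor_1^A=0$. This is true (it follows from the Tor-version of the equivalence in Proposition~\ref{Tor Ext}, namely that $Tor_1^A(B,B)=0$ forces $Tor_1^A(M,N)\cong Tor_1^B(M,N)$ for $B$-modules $M,N$), but it is a lemma you must state and use, not something that comes for free from ``filtered colimits and induction.'' Second, and more seriously, the claim that a normal form for a single inverted $\sigma$ ``yields a concrete flat resolution of $A_\Sigma$ as a right $A$-module, and tensoring it with $A_\Sigma$ \dots gives the desired vanishing by a direct diagram chase'' is precisely the hard part of the theorem, and as written it is an assertion rather than an argument. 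Note that $A_\Sigma$ is \emph{not} in general flat over $A$ — the vanishing is only $Tor_1^A(A_\Sigma,A_\Sigma)=0$, not $Tor_1^A(A_\Sigma,-)=0$ — so one cannot hope for the resolution itself to do the work; the specific interplay with the second tensor factor is essential. Establishing a workable normal form for elements of $A_\Sigma$ (Cramer's rule / Malcolmson's criterion) is a known, delicate combinatorial enterprise, and deducing the homological statement from it is likewise nontrivial; the standard references (Schofield, and in a different formulation Bergman and Dicks) devote substantial effort to exactly this step. Your sketch names the obstacle correctly but does not surmount it, so the proposal is incomplete at the one place where the theorem has real content.
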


Let now ${\mathcal{U}}$ be a set of finitely presented modules of projective dimension at most one.
For each $U\in\mathcal{U},$ we fix a  projective resolution 
$0\to P\stackrel{{\sigma_U}}{\to} Q\to U\to 0$  in $mod(A)$
and we set $\Sigma=\{\sigma_U\mid U\in\mathcal{U}\}.$ 
We denote by {$f_{\mathcal U}:A\to A_{\mathcal{U}}$} the universal localisation of $A$ at
 $\Sigma$. Note that  $A_{\mathcal
U}$ does not depend on the chosen class $\Sigma$ (compare \cite[Theorem~0.6.2]{Cohn}).
Moreover, the ring epimorphism $f_{\mathcal U}$ corresponds to the bireflective subcategory $\mathcal X_{ A_{\mathcal U}}=\mathcal U^\perp$ by \cite[Proposition 2.7]{AA}.
Finally, if $A$ is hereditary, then $f_{\mathcal U}$ is injective if and only if the modules in $\mathcal U$ are \textbf{bound}, i.e.~they are finitely presented modules $U$ such that $Hom_A(U,A)=0$ (compare \cite{Scho2} and \cite[Lemma 4.1]{Ma}). 

\begin{theorem}\cite[Theorem 2.3]{Scho2},\cite[Theorem 6.1]{KSt} \label{univlochom}
Let $A$ be a hereditary ring. Then a ring epimorphism starting in $A$ is homological if and only if it is a universal localisation. 
Moreover, the assignment $\gamma: \mathcal W\mapsto (f_{\mathcal W}: A\to A_{\mathcal W})$ defines a bijection between
\begin{enumerate}
\item  wide subcategories of $mod(A)$;
\item epiclasses of universal localisations of $A$
\end{enumerate}
which restricts  to a bijection between
\begin{enumerate}
\item  wide subcategories of bound $A$-modules;
\item epiclasses of injective universal localisations of $A$.
\end{enumerate}
\end{theorem}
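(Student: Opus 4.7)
The plan is to split the theorem into the biconditional "homological $\Leftrightarrow$ universal localisation", the bijection $\gamma$ on wide subcategories, and the restriction to bound modules and injective universal localisations, and to handle these in turn.

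For the biconditional, the easy direction is that any universal localisation $f_{\mathcal U}: A \to A_{\mathcal U}$ is homological: Theorem \ref{def:universallocalisation} already gives $Tor_1^A(A_{\mathcal U}, A_{\mathcal U}) = 0$, and the hereditary hypothesis forces $Tor_i^A$ to vanish for all $i \geq 2$. For the harder converse, given a homological ring epimorphism $f: A \to B$, I would first invoke Proposition \ref{prop silting idempotent}(2) to factor $f$ as $A \to A/Ker(f) \to B$, where the quotient remains hereditary, thereby reducing to the case when $f$ is injective. Theorem \ref{main silting epi} then presents $T := B \oplus Coker(f)$ as a minimal tilting $A$-module with $\mathcal{X}_B = Coker(f)^\perp$. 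Writing $T_1 := Coker(f)$ as a filtered colimit (indexed by its finitely generated submodules) of finitely presented modules $U_i$, each of projective dimension at most one since $A$ is hereditary, I take $\mathcal U = \{U_i\}$ and let $\Sigma$ consist of the minimal projective presentations of these $U_i$. By Remark \ref{rem hom tensor}, $f$ inverts each $\sigma \in \Sigma$, so by the universal property there is a canonical factorisation $A \to A_{\mathcal U} \to B$. Since both maps produce the same bireflective subcategory $\mathcal U^\perp = T_1^\perp = \mathcal{X}_B$, this factorisation must be an isomorphism.

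For the bijection $\gamma$, surjectivity is immediate from the equivalence just proved. For injectivity, I would recover a wide subcategory $\mathcal W$ from $f_{\mathcal W}$ by taking the full subcategory of $mod(A)$ whose objects lie in ${}^\perp\mathcal{X}_{A_{\mathcal W}}$; since $\mathcal{X}_{A_{\mathcal W}} = \mathcal W^\perp$ by \cite[Proposition 2.7]{AA} and wide subcategories of $mod(A)$ are determined by their right-perpendicular class in the hereditary setting (a global analogue of Corollary \ref{IngallsThomas}), this recovery is well-defined and retrieves exactly $\mathcal W$.

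For the restricted bijection, if every $U \in \mathcal W$ is bound then $Hom_A(U,A) = 0$ yields $A \in \mathcal W^\perp = \mathcal{X}_{A_{\mathcal W}}$, so the reflection map $f_{\mathcal W}$ is forced to be injective. Conversely, when $f_{\mathcal W}$ is injective, the embedding $Hom_A(U,A) \hookrightarrow Hom_A(U, A_{\mathcal W})$ together with $Hom_A(U, A_{\mathcal W}) = 0$ (since $A_{\mathcal W} \in \mathcal W^\perp$) forces $Hom_A(U,A) = 0$, so every object of $\mathcal W$ is bound. The main obstacle I anticipate is the filtered-colimit step in the hard direction of the biconditional: verifying that the finitely presented pieces of $Coker(f)$ have projective presentations whose universal localisation recovers $B$ itself, and not merely a subring, requires combining the hereditary hypothesis with the minimality of the approximation sequence furnished by Theorem \ref{main silting epi}.
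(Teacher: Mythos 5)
This theorem is stated in the paper as a citation of \cite[Theorem 2.3]{Scho2} and \cite[Theorem 6.1]{KSt}; the paper does not supply its own proof. Your attempt, however, contains a genuine gap in the hard direction of the biconditional.

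The problem lies in the filtered-colimit step. You write $T_1 := \operatorname{Coker}(f)$ as a filtered colimit of finitely presented modules $U_i$ (already problematic, since over a non-coherent hereditary ring the finitely generated submodules of $T_1$ need not be finitely presented), set $\mathcal U = \{U_i\}$, and then assert $\mathcal U^\perp = T_1^\perp$. Only one inclusion is automatic. If $X \in T_1^\perp$ and $U_i \hookrightarrow T_1$, then applying $\operatorname{Hom}_A(-,X)$ to $0 \to U_i \to T_1 \to T_1/U_i \to 0$ gives an exact sequence in which $\operatorname{Hom}_A(U_i,X)$ embeds into $\operatorname{Ext}^1_A(T_1/U_i,X)$; nothing forces the latter to vanish, so $X$ need not lie in $U_i^\circ$. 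Thus $\mathcal U^\perp$ can be a proper subcategory of $T_1^\perp = \mathcal X_B$, and the claimed factorisation $A \to A_{\mathcal U} \to B$ need not be an isomorphism. (Remark \ref{rem hom tensor} also does not apply as you invoke it: it concerns the fixed projective presentation $\sigma$ of $T_1$ appearing in the definition of partial silting, not minimal presentations of arbitrary finitely presented subquotients.) This implication, that over a hereditary ring every homological ring epimorphism is a universal localisation, is essentially the telescope conjecture for $D(A)$ in the hereditary case, which is the substantial content of \cite[Theorem 6.1]{KSt}; it is proved there by identifying bireflective extension-closed subcategories of $\operatorname{Mod}(A)$ with smashing subcategories of the derived category and showing they are compactly generated — a route with no elementary substitute along the lines you propose.

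A secondary issue: in the restricted bijection you argue that if every $U \in \mathcal W$ is bound then $\operatorname{Hom}_A(U,A)=0$ "yields $A \in \mathcal W^\perp$", but $\mathcal W^\perp = \mathcal W^\circ \cap \mathcal W^{\perp_1}$ also requires $\operatorname{Ext}^1_A(U,A)=0$, which is not automatic for bound $U$. The equivalence "$f_{\mathcal U}$ injective $\iff$ the modules in $\mathcal U$ are bound" is itself a nontrivial fact that the paper attributes to \cite{Scho2} and \cite[Lemma 4.1]{Ma}; the injectivity of $\gamma$ similarly relies on Schofield's correspondence and is not a formal consequence of the perpendicular calculus alone.
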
 

\begin{remark}\label{wide}
(1) Notice the difference between the bijections in Corollary \ref{IngallsThomas} and Theorem \ref{univlochom}: in the first case the wide subcategory associated with $f:A\to B$ is $\mathcal X_B\cap mod(A)$, while in the second case we are taking the wide subcategory $\mathcal W$ of all modules $U$ with a projective resolution $\sigma_U$ which is inverted by the functor $-\otimes_A B$, or in other words, $\mathcal W = {}^\perp\mathcal X_B\cap mod(A)$, and $\mathcal X_B=\Wcal^\perp$.

(2)  By \cite[Theorem 2.5]{Scho2}, the wide subcategories $\mathcal W$ consisting of bound $A$-modules correspond bijectively to  Hom-perpendicular sets of finitely presented bound $A$-modules, that is, antichains of non-projective modules in the terminology of \cite{R}.
\end{remark}

\begin{corollary}\label{triangle}
Let $A$ be a hereditary ring. Then there is a commutative triangle of bijections
$$\xymatrix{{\left\{\begin{array}{c}\text{equivalence classes of} \\ \text{minimal silting }A\text{-modules} \end{array}\right\}}\ar[rr]^{\alpha} &  & {\left\{\begin{array}{c}\text{epiclasses of universal} \\ \text{localisations of } A \end{array}\right\}}\\ & {\left\{\begin{array}{c}\text{wide subcategories } \\ \text{of } mod(A)\ar[ur]^{\gamma}\ar@{-->}[ul]_{\beta}\end{array}\right\}} &}$$
where $\alpha$ is defined in Theorem \ref{main silting epi}, $\gamma$ is defined in Theorem \ref{univlochom} and  $\beta$ assigns to a wide subcategory $\Wcal$ the silting class 
$\Wcal^{\perp_1}\cap\mathcal X_{\bar{A}},\text{ where }\bar{A}=A/ Ker(f_{\Wcal}).$
\end{corollary}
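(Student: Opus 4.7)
The plan is to set $\beta:=\alpha^{-1}\circ\gamma$. Bijectivity and commutativity of the triangle then come for free: $\alpha$ is a bijection by Theorem~\ref{main silting epi}, $\gamma$ is a bijection by Theorem~\ref{univlochom}, and the first assertion of Theorem~\ref{univlochom} identifies their codomains. Thus only the explicit formula for $\beta$ requires checking.

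Unravelling, for $\Wcal$ wide let $f_{\Wcal}\colon A\to A_{\Wcal}$ denote $\gamma(\Wcal)$. By Proposition~\ref{prop silting idempotent}(2), $\Ker(f_{\Wcal})$ is idempotent, so with $\bar A=A/\Ker(f_{\Wcal})$ we obtain a factorisation $A\twoheadrightarrow\bar A\hookrightarrow A_{\Wcal}$ whose second arrow is an injective homological ring epimorphism between hereditary rings. By Theorem~\ref{ring epis and tilting}(1), $T:=A_{\Wcal}\oplus A_{\Wcal}/\bar A$ is a tilting $\bar A$-module, and by Proposition~\ref{prop silting idempotent}(1) a (minimal) silting $A$-module; the surjectivity step in the proof of Theorem~\ref{main silting epi} shows $T=\alpha^{-1}(f_{\Wcal})$. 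So $\beta(\Wcal)$ is the silting class $Gen(T)$, and what remains is the identity $Gen(T)=\Wcal^{\perp_1}\cap\Xcal_{\bar A}$.

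The inclusion $Gen(T)\subseteq\Xcal_{\bar A}$ holds automatically since $T$ is an $\bar A$-module. Inside $Mod(\bar A)$, the tilting property yields $Gen(T)=T^{\perp_1}$ (Proposition~\ref{tilt silt}), and applying $Hom_{\bar A}(-,X)$ to $0\to\bar A\to A_{\Wcal}\to A_{\Wcal}/\bar A\to 0$ together with hereditariness of $\bar A$ reduces this to $(A_{\Wcal}/\bar A)^{\perp_1}$; by Proposition~\ref{Tor Ext} the $Ext^1$ may be read over either $A$ or $\bar A$. The match with $\Wcal^{\perp_1}$ then follows by comparing $\Xcal_{A_{\Wcal}}=\Wcal^\perp$ in $Mod(A)$ (Remark~\ref{wide}(1)) with $\Xcal_{A_{\Wcal}}=(A_{\Wcal}/\bar A)^\perp$ in $Mod(\bar A)$ (Theorem~\ref{ring epis and tilting}(1)). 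The main obstacle is to separate the $\perp_1$-identity from the full $\perp$-identity in this comparison: it ultimately rests on the defining universal property of $A_{\Wcal}$ as the localisation inverting precisely the projective presentations of modules in $\Wcal$, so that $A_{\Wcal}/\bar A$ and $\Wcal$ have matching $Ext^1$-reach inside $Mod(\bar A)$.
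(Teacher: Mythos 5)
Your overall strategy coincides with the paper's: set $\beta:=\alpha^{-1}\circ\gamma$, identify $\alpha^{-1}(f_{\Wcal})$ with $A_{\Wcal}\oplus A_{\Wcal}/\bar{A}$ via the surjectivity part of Theorem \ref{main silting epi}, and then verify that its silting class equals $\Wcal^{\perp_1}\cap\Xcal_{\bar{A}}$. Up to the reduction $Gen(T)=(A_{\Wcal}/\bar{A})^{\perp_1}\cap\Xcal_{\bar{A}}$ your argument is fine (hereditariness is not even needed there, since $Ext^1_{\bar{A}}(\bar{A},-)=0$ already gives $(A_{\Wcal}/\bar{A})^{\perp_1}\subseteq A_{\Wcal}^{\perp_1}$ inside $Mod(\bar{A})$). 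But the final step, the one you yourself call ``the main obstacle,'' is exactly the nontrivial content of the corollary and is not proved. Comparing the two descriptions of $\Xcal_{A_{\Wcal}}$ only yields the coincidence of the \emph{full} perpendicular categories, $\Wcal^{\perp}=(A_{\Wcal}/\bar{A})^{\perp}\cap\Xcal_{\bar{A}}$, and there is no formal way to ``separate'' this into the corresponding $\perp_1$-statement: two classes can have the same $Hom$-and-$Ext^1$ perpendicular while their $Ext^1$-perpendiculars differ. The appeal to the universal property of $A_{\Wcal}$ and to ``matching $Ext^1$-reach'' is a gesture, not an argument.

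What is needed is the identity $Gen(A_{\Wcal})=\Ker Ext^1_{\bar{A}}(\Wcal,-)=\Wcal^{\perp_1}\cap\Xcal_{\bar{A}}$, i.e.\ that the tilting class of the tilting $\bar{A}$-module $A_{\Wcal}\oplus A_{\Wcal}/\bar{A}$ is the $Ext^1$-orthogonal of $\Wcal$ itself, not merely of $A_{\Wcal}/\bar{A}$. One inclusion is elementary: for $W\in\Wcal$ with presentation $0\to P\stackrel{\sigma_W}{\to}Q\to W\to 0$, the invertibility of $\sigma_W\otimes_AA_{\Wcal}$ gives $Ext^1_A(W,A_{\Wcal})=0$, and since $A$ is hereditary and $W$ finitely presented, $Ext^1_A(W,-)$ then vanishes on every quotient of a coproduct of copies of $A_{\Wcal}$, hence on $Gen(A_{\Wcal})$. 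The reverse inclusion $\Wcal^{\perp_1}\cap\Xcal_{\bar{A}}\subseteq Gen(A_{\Wcal})$ is genuinely nontrivial; the paper does not reprove it but imports it from \cite[Corollary 4.13]{AS1} and \cite[Theorem 2.6]{Scho2}. To close your proof you must either cite these results, as the paper does, or give an actual argument for this inclusion; as written, the proposal has a gap at precisely this point.
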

\begin{proof} We only have to show that $\beta=\alpha^{-1}\circ \gamma$. From \cite[Corollary 4.13]{AS1} and \cite[Theorem 2.6]{Scho2} we know that $A_\Wcal \oplus A_\Wcal/\bar{A}$ is a tilting $\bar{A}$-module  with tilting class $Gen(A_\Wcal)= Ker Ext^1_{\bar{A}}(\Wcal,-)=\Wcal^{\perp_1}\cap\mathcal X_{\bar{A}}$. Hence, we have $\beta(\Wcal)=Gen(A_\Wcal)$, which is the silting class of the silting module $A_\Wcal\oplus Coker f_\Wcal= \alpha^{-1}( f_{\Wcal})=\alpha^{-1}( \gamma(\Wcal))$ (see the proof of Theorem \ref{main silting epi}).
\end{proof}

\begin{corollary}\label{lattice}
Let $A$ be a hereditary ring. Then the epiclasses of homological ring epimorphisms starting in $A$ form a lattice with respect to $\le$.
\end{corollary}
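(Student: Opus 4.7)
The plan is to transport the lattice structure from wide subcategories of $mod(A)$ through the bijection $\gamma$ of Theorem \ref{univlochom}, using that over a hereditary ring every homological ring epimorphism is a universal localisation.

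First, I would check that $\gamma$ is order-reversing, that is, $\mathcal{W}_1 \subseteq \mathcal{W}_2$ if and only if $f_{\mathcal{W}_1} \geq f_{\mathcal{W}_2}$. For the forward direction, choose projective resolutions $\sigma_U$ of the modules $U$ in $\mathcal{W}_2$; then $f_{\mathcal{W}_2}$ is in particular $\{\sigma_U\mid U\in\mathcal{W}_1\}$-inverting, and the universal property in Theorem \ref{def:universallocalisation} yields the required ring map $g\colon A_{\mathcal{W}_1}\to A_{\mathcal{W}_2}$ with $g\circ f_{\mathcal{W}_1}=f_{\mathcal{W}_2}$. For the converse, given such a $g$, tensoring the resolutions $\sigma_U$ for $U\in \mathcal{W}_1$ with $A_{\mathcal{W}_2}$ over $A$ still yields isomorphisms, so every such $U$ lies in ${}^\perp\mathcal{X}_{A_{\mathcal{W}_2}}\cap mod(A)=\mathcal{W}_2$ by Remark \ref{wide}(1); hence $\mathcal{W}_1\subseteq\mathcal{W}_2$.

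Second, I would verify that wide subcategories of $mod(A)$ form a (complete) lattice under inclusion. The intersection of any family of wide subcategories is again wide, since all the defining closure properties (closure under kernels, cokernels and extensions inside $mod(A)$) are preserved by intersection; this gives arbitrary meets. Joins are then obtained as meets of upper bounds, using that $mod(A)$ itself is a wide subcategory to ensure that the family of upper bounds is non-empty.

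Combining the two steps, the order-reversing bijection $\gamma$ transfers the lattice structure on wide subcategories to the epiclasses of homological ring epimorphisms starting in $A$, with meets and joins interchanged. I do not expect a serious obstacle; the most delicate point is verifying the order-reversing nature of $\gamma$, which uses the characterisation $\mathcal{W}={}^\perp\mathcal{X}_B\cap mod(A)$ from Remark \ref{wide}(1) rather than the equivalent $\mathcal{X}_B=\mathcal{W}^\perp$.
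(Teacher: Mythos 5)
Your proof is correct and follows essentially the same strategy as the paper: transport the lattice structure on wide subcategories of $mod(A)$ through the bijection $\gamma$ of Theorem \ref{univlochom}, after verifying that $\gamma$ is order-reversing. The only difference is in how the order-reversal is established: the paper first notes that $f_1\ge f_2$ is equivalent to $\mathcal X_{B_2}\subseteq\mathcal X_{B_1}$ (a standard fact about ring epimorphisms) and then passes to $\Wcal_1\subseteq\Wcal_2$ via the Galois-type relation $\mathcal X_B=\Wcal^\perp$, $\Wcal={}^\perp\mathcal X_B\cap mod(A)$ from Remark \ref{wide}(1), whereas you prove the forward implication directly from the universal property of universal localisation in Theorem \ref{def:universallocalisation} and use Remark \ref{wide}(1) only for the converse. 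Both routes are valid and equally short; your treatment of the lattice of wide subcategories (meets by intersection, joins via the top element $mod(A)$) matches the paper's implicit appeal to the same fact.
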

\begin{proof}
Let $f_1:A\to B_1$ and $f_2:A\to B_2$ be homological ring epimorphisms. Of course, $f_1\ge f_2$ if and only if $\mathcal X_{B_2}$ is a full subcategory of  $\mathcal X_{B_1}$.  By Theorem \ref{univlochom} and Remark \ref{wide}(1) this amounts to the inclusion $\Wcal_1\subset \Wcal_2$ for the wide subcategories $\Wcal_i=\gamma^{-1}(f_i)$. The claim now follows from the fact that the wide subcategories of $mod(A)$ form a lattice. More precisely, the join of $f_1$ and $f_2$ is given by the universal localisation at the wide subcategory $\Wcal_1\cap\Wcal_2$, and the meet is given by the universal localisation at $\Wcal_1\cup\Wcal_2$ (or equivalently, by Theorem \ref{univlochom}, at the smallest wide subcategory containing $\Wcal_1\cup\Wcal_2$).
\end{proof}

If we restrict to  homological ring epimorphisms with finite dimensional target, we obtain the poset of exceptional antichains. This is known to be a lattice for finite dimensional hereditary algebras of finite representation type (compare \cite{IT,R}). Indeed, this  also follows from Corollary \ref{lattice} since every ring epimorphism has a finite dimensional target in the representation-finite case (\cite[Corollary 2.3]{GdP}). In general, however, the poset of exceptional antichains is not a lattice, as remarked in  \cite[p.65]{R} and illustrated in the following example.

\begin{example} The meet of two homological ring epimorphisms $f_1:A\to B_1$ and $f_2:A\to B_2$ can be infinite dimensional even when $B_1$ and $B_2$ are finite dimensional algebras. Indeed, if $\Lambda$ is a finite dimensional tame hereditary algebra and $\Wcal$ is a non-homogeneous tube with simple regular modules $S_1,\ldots,S_r$, then $f_1:\Lambda\to \Lambda_{\{S_1\}}$ and $f_2: \Lambda\to  \Lambda_{\{S_2,\ldots,S_r\}}$ have finite dimensional targets, but their meet $f: \Lambda\to  \Lambda_\Wcal$ has an infinite dimensional target (see \cite[Section 4]{CB} and \cite[Proposition 1.10]{AS2}). A specific instance of this phenomenon is provided in \cite[Example 3.1.4]{R}.
\end{example}

\begin{example} We compute the lattice of homological ring epimorphisms for the Kronecker algebra $\Lambda$ (see Example \ref{Kronecker}). Let us denote by $P_i$ (respectively $Q_i$), with $i\in\mathbb{N}$, the (finite dimensional) indecomposable preprojective (respectively, preinjective) modules, indexed such that $dim_\Kbb Hom_A(P_i,P_{i+1})=2$ (respectively, $dim_\Kbb Hom_A(Q_{i+1},Q_i)=2$). Also, we identify below the quasi-simple regular $\Lambda$-modules with points in the projective line $\mathbb{P}^1_\Kbb$. Following Example \ref{Kronecker}, we can list all minimal silting $\Lambda$-modules and all homological ring epimorphisms of $\Lambda$ (together with their associated bireflective subcategories) as follows. 
$$\begin{array}{ccccc}
\text{Silting module} && \text{Homological ring epimorphism} && \text{Bireflective subcategory of }Mod(\Lambda)\\
\hline
0&& \Lambda\rightarrow 0 && \Xcal_0=0\\
\Lambda=P_1\oplus P_2 && Id && \Xcal_{Id}=Mod(\Lambda)\\
P_1 && \lambda_0:\Lambda\rightarrow \Lambda/\Lambda e_2\Lambda&& \Xcal_{\lambda_0}=Add(P_1) \\
Q_1 && \mu_0:\Lambda\rightarrow \Lambda/\Lambda e_1\Lambda && \Xcal_{\mu_0}=Add(Q_1)\\
(P_i\oplus P_{i+1})_{i\geq 2} && (\lambda_i:\Lambda\rightarrow \Lambda_{\{P_{i+1}\}})_{i\in\mathbb{N}} && (\Xcal_{\lambda_i}=Add(P_i))_{i\in\mathbb{N}}\\
(Q_{i+1}\oplus Q_{i})_{i\in\mathbb{N}} && (\mu_i:\Lambda\rightarrow \Lambda_{\{Q_{i}\}})_{i\in\mathbb{N}} && (\Xcal_{\mu_i}=Add(Q_{i+1}))_{i\in\mathbb{N}}\\
(\Lambda_\Ucal\oplus \Lambda_\Ucal/\Lambda)_{\emptyset\neq \Ucal\subseteq \mathbb{P}^1_\Kbb}&& (\lambda_\Ucal:\Lambda\rightarrow \Lambda_\Ucal)_{\emptyset\neq \Ucal\subseteq \mathbb{P}^1_\Kbb} && (\Xcal_{\lambda_\Ucal}=\Ucal^\perp)_{\emptyset\neq \Ucal\subseteq \mathbb{P}^1_\Kbb}
\end{array}$$

The lattice of homological ring epimorphisms for the Kronecker algebra is then as follows
$$\xymatrix{&&&& \ Id \ \ar@{-}[lllld]\ar@{-}[llld]\ar@{-}[lld]\ar@{-}[rrrrd]\ar@{-}[rrrd]\ar@{-}[rrd]\ar@<-2ex>@{-}[d]^{\ ...}\ar@<-1.6ex>@{-}[d]\ar@<-1.2ex>@{-}[d]\ar@<1.2ex>@{-}[d]\ar@<1.6ex>@{-}[d]\ar@<2ex>@{-}[d]\\ \lambda_0\ar@{-}[rrrrdddd]&\lambda_1\ar@{-}[rrrdddd]&\lambda_2\ar@{-}[rrdddd] & ... & *+[F]{\{\lambda_x| x\in \mathbb{P}^1_\mathbb{K}\}}\ar@{--}@/_1pc/[dd]^{...}\ar@{--}[dd]\ar@{--}@/_2pc/[dd]^{...}\ar@{--}@/_3pc/[dd]^{...}\ar@{--}[dd]\ar@{--}@/^3pc/[dd]_{...}\ar@{--}@/^2pc/[dd]_{...}\ar@{--}@/^1pc/[dd]_{...} & ... & \mu_2\ar@{-}[lldddd] & \mu_1\ar@{-}[llldddd] &\mu_0\ar@{-}[lllldddd] \\ \\ &&&& *+[F]{\{\lambda_{\mathbb{P}^1_\Kbb\setminus \{x\}}| x\in \mathbb{P}^1_\mathbb{K}\}}\ar@<-2ex>@{-}[d]^{\ ...}\ar@<-1.6ex>@{-}[d]\ar@<-1.2ex>@{-}[d]\ar@<1.2ex>@{-}[d]\ar@<1.6ex>@{-}[d]\ar@<2ex>@{-}[d]\\ &&&& *+[F]{\lambda_{\mathbb{P}^1_\Kbb}}\ar@{-}[d]\\ &&&&0}$$
where the interval between $Id$ and $\lambda_{\mathbb{P}^1_\Kbb}$ represents the dual poset of subsets of $\mathbb{P}^1_\Kbb$. The ring epimorphisms with infinite dimensional target are those in frames, i.e., those of the form $\lambda_\Ucal$ with $\emptyset\neq\Ucal\subseteq \mathbb{P}^1_\Kbb$. The poset obtained by excluding these elements is precisely the poset of exceptional antichains from \cite{R}. Note that the above lattice is dual to the one of wide subcategories in $mod(\Lambda)$. The poset of silting classes is, however, completely different. As before, let $L$ denote the Lukas tilting module (which is not minimal and, thus, does not appear above). 
The silting classes corresponding to infinite dimensional silting modules are framed.
$$\xymatrix{&Mod(\Lambda)\ar@{-}[ddddl]\ar@{-}[dr]&\\ && Gen(P_2)\ar@{-}[d]\\ && Gen(P_3)\ar@{.}[d]\\ && *+[F]{Gen(L)}\ar@<-2ex>@{-}[d]^{\ ...}\ar@<-1.6ex>@{-}[d]\ar@<-1.2ex>@{-}[d]\ar@<1.2ex>@{-}[d]\ar@<1.6ex>@{-}[d]\ar@<2ex>@{-}[d]\\  Gen(P_1)\ar@{-}[ddddddr]&& *+[F]{\{Gen(\Lambda_x)| x\in \mathbb{P}^1_\Kbb\}}\ar@{--}@/_1pc/[dd]^{...}\ar@{--}[dd]\ar@{--}@/_2pc/[dd]^{...}\ar@{--}@/_3pc/[dd]^{...}\ar@{--}[dd]\ar@{--}@/^3pc/[dd]_{...}\ar@{--}@/^2pc/[dd]_{...}\ar@{--}@/^1pc/[dd]_{...}\\  \\&& *+[F]{\{Gen(\Lambda_{\mathbb{P}^1_\Kbb\setminus \{x\}}|x\in\mathbb{P}^1_\Kbb)\}}\ar@<-2ex>@{-}[d]^{\ ...}\ar@<-1.6ex>@{-}[d]\ar@<-1.2ex>@{-}[d]\ar@<1.2ex>@{-}[d]\ar@<1.6ex>@{-}[d]\ar@<2ex>@{-}[d]\\ && *+[F]{Gen(\Lambda_{\mathbb{P}^1_\Kbb})}\ar@{.}[d] \\  && Gen(Q_2)\ar@{-}[d]\\ && Gen(Q_1)\ar@{-}[dl]\\ &0&&}$$
\end{example}

Next, we focus on the case of a Dedekind  (i.e.~commutative and hereditary) domain. Here there is an interesting connection with Gabriel topologies. Indeed, over any coherent ring $A$, there is a bijective correspondence assigning to every Serre subcategory $\mathcal U$ of $mod(A)$ a Gabriel topology of finite type ${\mathcal{L}}_{\mathcal U}$ on $A$ (see \cite[Theorem VI.5.1]{Ste},\cite[Theorem 2.8]{He},\cite[Corollary 2.10]{K}).  Notice that every Serre subcategory is wide, and over commutative noetherian rings also the converse is true by \cite[Theorem A]{Tak}. Also, recall that every Gabriel topology $\mathcal L$ induces a ring homomorphism $A\to Q_{\mathcal L}$. We say that a ring epimorphism $A\to B$ is non-trivial if $B\not=0$, and we consider only non-trivial Gabriel topologies, i.e. consisting of non-zero ideals.
\begin{example}\label{Ded} If $A$ is a {Dedekind domain}, all non-trivial homological ring epimorphism are injective and the maps $\alpha$, $\beta$ and $\gamma$ from Corollary \ref{triangle} define bijections between:
\begin{enumerate}
\item[(a)] equivalence classes of tilting $A$-modules; 
\item[(b)] wide subcategories of bound $A$-modules;
\item[(c)] Gabriel topologies on $A$;
\item[(d)] epiclasses of non-trivial universal localisations of $A$;
\item[(e)] epiclasses of non-trivial homological ring epimorphisms of $A$.
\end{enumerate}
Indeed,  we know that all tilting modules are minimal from \cite[Corollary 6.12]{AS1}.
Further, all Gabriel topologies are of finite type, and over commutative semihereditary rings the latter coincide with perfect Gabriel topologies  by
 \cite[Chapter XI, Proposition 3.3]{Ste}. This means that the localisations $A\to Q_{\mathcal L}$ of $A$ induced by  Gabriel topologies are (flat) ring epimorphisms, and in fact, they are precisely  the non-trivial universal localisations of $A$ by \cite[Theorem 7.8]{BS}.
  So, it only remains to check that every non-trivial universal localisation $f_\Wcal$ is injective. 
But this is true over any commutative semihereditary domain. Indeed, if $\Wcal$ contains a non-bound module, then it contains a non-zero  projective module $P$, which must vanish under $-\otimes_A A_\Wcal$. Then also the trace  of $P$ in $A$ vanishes under $-\otimes_A A_\Wcal$. But since $A$ has non non-trivial idempotents, it follows from \cite[Theorem 2.44]{Lam} that the trace coincides with $A$, so $A_\Wcal=0$.

The maps $\beta$ and $\gamma$ can also be described as follows.  Given a wide subcategory $\Wcal$, we consider the Serre subcategory $\mathcal U= Tr(\mathcal{W})$, where  $Tr$ denotes the Auslander-Bridger-transpose (which is an equivalence on bound modules and thus maps Serre subcategories to Serre subcategories). Then $\gamma(\Wcal)$ is the localisation $A\to Q_{{\mathcal L}_{\mathcal{U}}}$ associated with the Gabriel topology $\mathcal{L}_\Ucal$, and the tilting class $\beta(\Wcal)$ consists of the ${\mathcal L}_{\mathcal{U}}$-divisible modules (as defined in \cite[p.155]{Ste}).

Finally, we observe that the poset of homological ring epimorphisms of a Dedekind domain $A$ is dual to the poset of subsets of maximal ideals of $A$ (see \cite[Corollary 6.12]{AS1}).
\end{example}
 
We finish the paper by placing our classification of minimal tilting modules over hereditary rings in the context of further classification results for some special classes of rings. 

\begin{remark}\label{other}
If $A$ is a Pr\"ufer domain, the maps $\beta$ and $\gamma$ above define bijections between:
\begin{enumerate}
\item[(a)] equivalence classes of tilting $A$-modules;
\item[(b)] wide subcategories of bound $A$-modules;
\item[(c)] perfect Gabriel topologies on $A$;
\item[(d)] epiclasses of non-trivial universal localisations of $A$.
\end{enumerate}
Moreover, all non-trivial universal localisations are injective homological ring epimorphisms, but the converse is not true in general (see \cite[Section 8]{BS}).

\smallskip

Indeed, the assignment $\gamma$ is a bijection between (b) and (d): one can check that the arguments in \cite{Scho2} also hold for semihereditary rings. The correspondence between (c) and (d) follows as in Example \ref{Ded}. In particular, we can {deduce} that wide and Serre subcategories of bound modules coincide. Finally, the bijection between (a) and (c) is \cite[Theorem 5.3]{BET}.
\end{remark}

\begin{remark}
If $A$ is a commutative noetherian ring, there are bijections between:
\begin{enumerate}
\item[(a)] equivalence classes of tilting $A$-modules;
\item[(b)] wide subcategories of bound $A$-modules;
\item[(c)] faithful Gabriel topologies of $A$.
\end{enumerate}
A Gabriel topology is called \textbf{faithful} if the  localisation  $A\to Q_{\mathcal L}$ is injective. The bijection between (a) and (c) is  \cite[Theorem 2.11]{APST}. Again, to a wide subcategory of bound modules $\Ucal$, we associate the Gabriel topology ${\mathcal L}_{\mathcal U}$ and the tilting class consisting of the ${\mathcal L}_{\mathcal{U}}$-divisible modules.
\end{remark}

Our results for hereditary rings thus  share some common features with further classifications over other classes of rings. It would be nice to have a general statement encompassing all these cases.

\end{document}